\DeclareSymbolFont{cyrletters}{OT2}{wncyr}{m}{n}
\DeclareMathSymbol{\Sha}{\mathalpha}{cyrletters}{"58}
\newtheorem{theoA}{Theorem}
\newtheorem*{coro*}{Corollary}
\newtheorem*{conj*}{Conjecture}
\newtheorem*{lemm*}{Lemma}
\providecommand{\twomat}[4]{\left(\begin{array}{cc}#1&#2\\#3&#4\end{array}\right)}
\providecommand{\smalltwomat}[4]{\left(\begin{smallmatrix}#1&#2\\#3&#4\end{smallmatrix}\right)}
\theoremstyle{definition} 
\newtheorem*{defi*}{Definition}
\theoremstyle{remark}
\newtheorem{remark*}{Remark}
\numberwithin{equation}{subsection}
\numberwithin{table}{subsection}
\newcommand{\ot}{\otimes}
\newcommand{\ts}{\times}
\newcommand{\cd}{\cdot}
\newcommand{\beq}{\begin{equation}\begin{aligned}}
\newcommand{\eeq}{\end{aligned}\end{equation}}
\newcommand{\beqq}{\begin{equation*}\begin{aligned}}
\newcommand{\eeqq}{\end{aligned}\end{equation*}}
\newcommand{\lb}[1]{\label{#1}}
\newcommand{\divisor}{\mathrm{div}}
\newcommand{\Q}{\mathbf{Q}}
\newcommand{\GL}{\mathrm{GL}}
\newcommand{\X}{\mathscr{X}}
\newcommand{\cC}{\mathscr{C}}
\newcommand{\onto}{\twoheadrightarrow}
\newcommand{\cF}{\mathscr{F}}
\newcommand{\R}{\mathbf{R}}
\newcommand{\Z}{\mathbf{Z}}
\newcommand{\frakm}{\mathfrak{m}}
\newcommand{\frakp}{\mathfrak{p}}
\newcommand{\Ig}{\mathrm{Ig}}
\newcommand{\an}{\mathrm{an}}
\newcommand{\into}{\hookrightarrow}
\newcommand{\Y}{\mathscr{Y}}
\newcommand{\nrd}{\mathrm{nrd}}
\newcommand{\tht}{\theta}
\newcommand{\lm}{\lambda}
\newcommand{\Lm}{\Lambda}
\newcommand{\sg}{\sigma}
\newcommand{\Sg}{\Sigma}
\newcommand{\cI}{\mathscr{I}}
\newcommand{{\cG}}{\mathscr{G}}
\newcommand{\cM}{\mathscr{M}}
\newcommand{\cN}{\mathscr{N}}
\newcommand{\cB}{B}
\newcommand{\cD}{\mathscr{D}}
\newcommand{\C}{\mathbf{C}}
\newcommand{\N}{\mathbf{N}}
\newcommand{\OO}{\mathscr{O}}
\newcommand{\A}{\mathbf{A}}
\newcommand{\bks}{\backslash}
\newcommand{\baar}{\overline}
\newcommand{\eps}{\varepsilon}
\newcommand{\vphi}{\varphi}
\newcommand{\vpi}{\varpi}
\newcommand{\vol}{\mathrm{vol}}
\newcommand{\Gal}{\mathrm{Gal}}
\newcommand{\Ker}{\mathrm{Ker}\,}
\newcommand{\Hom}{\mathrm{Hom}\,}
\newcommand{\End}{\mathrm{End}\,}
\newcommand{\llb}{\llbracket}
\newcommand{\rrb}{\rrbracket}
\newcommand{\Spec}{\mathrm{Spec}\,}
\newcommand{\Spf}{\mathrm{Spf}\,}
\newsavebox\tempbox
\let\svwidetilde\widetilde
\renewcommand\widetilde[1]{\sbox\tempbox{$#1$}\svwidetilde{\usebox{\tempbox}}}
   \def\XXint#1#2#3{{\setbox0=\hbox{$#1{#2#3}{\int}$}
        \vcenter{\hbox{$#2#3$}}\kern-.5\wd0}}
\title{$p$-adic equidistribution of CM points }
\author{Daniel Disegni} 
\address{Department of Mathematics, Ben-Gurion University of the Negev, Be'er Sheva 84105, Israel.}
\email{disegni@bgu.ac.il}
\begin{document}
\begin{abstract}
Let $X$ be a modular curve and consider  a sequence of Galois orbits of CM points in $X$, whose $p$-conductors tend to infinity.  Its  equidistribution properties  in $X(\C)$ and in the reductions of $X$ modulo primes different from $p$ are well understood.
 We study the equidistribution problem  in the Berkovich analytification $X_{p}^{\an}$ of $X_{\Q_{p}}$. 
 
 We partition the set of CM points of sufficiently high conductor in $X_{\Q_{p}}$ into finitely many explicit
\emph{basins} $\cB_{V}$, indexed by the irreducible components $V $ of the mod-$p$ reduction of the canonical model of $X$. We prove   that  a sequence $z_{n}$ of local  Galois orbits of CM points with $p$-conductor going to infinity has a limit in $X_{p}^{\an}$ if and only if it is eventually supported in a single basin $\cB_{V}$. If so, the limit is the unique point of $X_{p}^{\an}$ whose mod-$p$ reduction is the generic point of $V$. 

The result is proved in  the more general setting of  Shimura curves over totally real fields. The proof combines Gross's theory of quasicanonical liftings with a new formula for the  intersection numbers of CM curves and vertical components in a Lubin--Tate space.
\end{abstract}
\thanks{This work was supported by BSF grant 2018250 and ISF grant 1963/20.}
\maketitle

\tableofcontents

 \section{Introduction and statements of the main results}
 
The equidistribution properties of CM points have been studied from various points of view, with remarkable applications ranging from cases of the Andr\'e--Oort conjecture to the non-triviality of Heegner points. The main results until recently have been archimedean or $\ell$-adic.

 To illustrate the situation,  let $X_{/\Q}$ be a geometrically connected modular curve, and let  $$(z_{n})_{n\in \N}$$ be  a sequence of  CM points on $X$
 with $p$-part of the conductor going to infinity. We view the $z_{n}$ as scheme theoretic points of $X$, or equivalently as Galois orbits of geometric points.

Duke \cite{duke}, Clozel--Ullmo \cite{CU}, Zhang \cite{zhang} and others considered the images of the $z_{n}$ in $X(\C)$, and proved 
 the following archimedean equidistribution result: the sequence $\mu_{n}$ of averaged delta measures at the orbits $z_{n}$ converges to the hyperbolic probability measure on $X(\C)$. Cornut and Vatsal \cite{CV}  considered the reduction modulo $\ell$ of the $z_{n}$, when $\ell \neq p$ is a prime nonsplit in the fields of complex multiplication (and more generally, simultaneous reductions at  a finite set of such primes); they proved that 
the images of the orbits $z_{n}$ equidistribute to the counting probability measure on the finite set $\X_{\baar{\bf F}_{\ell}}^{\rm ss}$ of supersingular points. Very recently, Herrero--Menares--Rivera-Letelier \cite{HMRL-k} described the set of accumulation measures of the $\mu_{n}$ on the Berkovich analytic curve attached to the base-change $X_{\Q_{\ell}}$.

\medskip 

The purpose of this paper is to prove a $p$-adic equidistribution result for sequences of CM points.  
Consider the Berkovich analytic curve $X_{p}^{\an }$ attached to $X_{{\Q}_{p}}$.  It is a compact Hausdorff topological space, containing the set of closed points of $X_{\Q_{p}}$ and  equipped with a  reduction map to  the special fibre  $|\X_{{\bf F}_{p}}|$ of any model $\X/{\Z}_{p}$ of $X_{{\Q}_{p}}$. The generic point of any irreducible component $V$ of $|\X_{{\bf F}_{p}}|$ is the reduction of a unique point $\zeta_{V}\in X_{p}^{\an}$, called the \emph{Shilov point} of $V$. 
The study of our problem leads naturally to consider the canonical model defined by  Katz--Mazur \cite{KM}, whose  special fibre is topologically a union of finitely many irreducible curves intersecting at the supersingular points. 

Our main result is best described  in terms of sequences $(z_{n})$ of CM points of $X_{\Q_{p}}$, equivalently local Galois orbits of geometric CM points.  We explicitly partition the set of such CM points of  sufficiently large $p$-adic conductor  into  finitely many \emph{basins} $\cB_{V}$ indexed by the irreducible components of the special fibre of $\X_{{\bf F}_{p}}$. Our main theorem is that the sequence of measures $\mu_{n}:= \delta_{z_{n}}$ on $X_{p}^{\rm an}$  has a limit if and only if the sequence $(z_{n})$ is eventually supported in a single basin $\cB_{V}$, in which case the $\mu_{n}$ converge to the delta measure at $\zeta_{V}$.  Equivalently, the sequence $(z_{n})$ converges to $\zeta_{V}$ in the plain topological sense -- so that  we will only use the language of topology in what follows.

The theorem is proved in the more general context of Shimura curves over totally real fields. The rest of this introduction is dedicated to explaining its statement and the idea of proof, as well as the intersection formula that  lies at its core.  

For the modular curve of level~$1$, which has a single Shilov point, the equidistribution result was previously established in \cite{herrero}, as briefly discussed after Theorem \ref{main thm} below.

\subsection{CM points on Shimura curves and their integral models} \lb{sec 11}
Let $F$ be  a totally real field. Let $D$ be a quaternion algebra over $F$ whose ramification set $\Sg$ contains all the infinite places but one, which we denote by $\sg\colon F\into \R$.  
We may attach to $D$ a tower of Shimura curves $X_{U}/F$, where $U$ runs over compact open subgroups of $D_{\A^{\infty}}^{\ts}:=(D\ot_{\Q}\A^{\infty})^{\ts}$ (here $\A^{\infty}$ denotes the finite ad\`eles of $\Q$, whereas $\A$ will denote the ad\`eles).  This curve and its canonical integral model were studied by Carayol  \cite{carayol}, to which we refer for more details (see also \cite[\S 5]{asian} for a discussion of CM points).

 The points of $X_{U}^{}$ over $\C\stackrel{\sg}{\hookleftarrow} F$ can be  identified with 
\beq
\lb{Cpts}
X_{U}^{}(\C)\cong D^{\ts} \bks \mathfrak{h}^{\pm}\ts D_{\A^{\infty}}^{\ts}/ U \cup\{\rm cusps\}
\eeq
where $ \mathfrak{h}^{\pm} \cong \C-\R$ is viewed as a quotient of $D_{\sg}^{\ts}\cong \GL_{2}(\R)$, and  the finite set `$\{\rm cusps\}$' is non-empty only if $D=M_{2}(\Q)$ (it plays no role in this work).

From now on we fix an arbitrary  level $U\subset D_{\A^{\infty}}^{\ts}$,  assumed 
to be  sufficiently small so that   the $D^{\ts}$-action in \eqref{Cpts} has no nontrivial stabilisers. We let $X:=X_{U}$. 

\paragraph{CM points} Let $E$ be a CM quadratic extension of $F$, such  that each finite  $v\in\Sg$ is nonsplit in $E$.
Then the set of  $F_{\A}$-algebra embeddings $\psi\colon E_{\A}\into D_{\A}$ is non-empty.
  For any such $\psi$, the group $\psi(E^{\ts})\subset D_{\A^{\infty}}^{\ts}$ acts on the right on $X$.
 The fixed-point subscheme $X^{\psi(E^{\ts})} $,  called the scheme of points with CM by $(E, \psi)$,     is isomorphic (as $F$-subscheme)
to $\Spec E_{[\psi^{-1}(U)]}$ where $E_{C}$ denotes the abelian extension of $E$ with Galois group $C\subset E^{\ts}\bks E_{\A^{\infty}}^{\ts}$.  (This follows from the theory of complex multiplication as described in \cite[\S~5.2]{asian}.)
The \emph{CM (ind)-subscheme} of $X_{U}$ is the union 
$$X^{\rm CM}  =\bigcup_{(E, \psi)}  X^{\psi(E^{\ts})}.$$

\paragraph{Local integral models and their irreducible components} Fix a finite  place $v $ of $F$
  not in $\Sg$ and   an identification $D_{v}=M_{2}(F_{v})$, and assume $U=U^{v}U_{v}$ with $U_{v}\subset D_{v}^{\ts}$. Let  $\OO_{F_{v}}\subset F_{v}$ be the ring of integers, $\vpi_{v}$  a uniformiser, ${\bf F}_{v}$ the residue field.

Carayol \cite{carayol}, generalising \cite{KM}, defined a canonical integral model $\X=\X_{U}$ of $X_{}$ over  $\OO_{F_{v}}$. It carries a sheaf  $\cG$ of divisible  $\OO_{F_{v}}$-modules of rank $2$ together with, if $U_{v}=U_{n,v}:=\Ker(\GL_{2}(\OO_{F_{v}})\to \GL_{2}(\OO_{F_{v}}/\vpi_{v}^{n}))$, a 
Drinfeld level structure\footnote{This notion is recalled in \S\ref{LTs} below. When $n=0$, there is no level structure and the integral model was already constructed in \cite{morita}.}
$$\alpha\colon (\vpi_{v}^{-n}\OO_{F_{v}}/\OO_{F_{v}})^{2} \to \cG(\X).$$
In general, $\X=\X_{U}:= \X_{U^{v}U_{n,v}} /(U_{v}/U_{n,v})$ for any $U_{n,v}\subset U_{v}$.

The special fibre $\X_{{\bf F}_{v}}$ (see \cite[\S 9.4]{carayol}) is a union of connected components permuted simply transitively by an action of $F^{\ts}\bks F_{\A^{\infty}}^{\ts}/\OO_{F_{v}}^{\ts}\det U^{v}$.  The (\emph{supersingular}) locus  $\X_{{\bf F}_{v}}^{\rm ss}$
 where $\cG$ is connected is finite; its complement  $\X_{{\bf F}_{v}}^{\rm ord}:= \X_{{\bf F}_{v}} - \X_{{\bf F}_{v}}^{\rm ss}$ (\emph{ordinary} locus) is smooth.
 Each connected component $C\subset \X_{{\bf F}_{v}}$ is topologically a union of  irreducible components, each intersecting all of the others in each of the finitely many supersingular points.
 
 The irreducible components $V$ within a given connected component $C$ are canonically parametrised by\footnote{In \cite[\S 9.4]{carayol}, the discussion is over the algebraic closure $\baar{\bf F}_{v}$ (and the parameter is dual to our $\lm$). However, as is clear from the defining condition,  the irreducible components within each (${\bf F}_{v}$-)connected component are already defined over ${\bf F}_{v}$.}
$$ U_{v}\bks {\bf P}^{1}(F_{v}) ,$$
where we view $ {\bf P}^{1}(F_{v})$ as the space of $\OO_{F_{v}}^{\ts}$-classes of injections $\lm\colon \OO_{F_{v}}\to \OO_{F_{v}}^{2}$ with saturated image; for any $\OO_{F_{v}}$-module $M$, we will still denote by $\lm \colon M\to M^{2}$ the induced injection. The  parametrisation is  simple to  describe. Let 
\beq\lb{pontry}( -)^{*}:=\Hom_{\OO_{F_{v}}}(-, F_{v}/\OO_{F_{v}})= \varinjlim_{n} \ ( -)_{n}^{*},
\qquad   ( -)_{n}^{*}:=\Hom_{\OO_{F_{v}}}(-, \vpi_{v}^{-n}\OO_{F_{v}}/\OO_{F_{v}}), \eeq
be the Pontryagin duality functors. Throughout this paper, we  identify an irreducible component of a scheme (which by definition is an irreducible component of the underlying topological space)  with the corresponding subscheme with the reduced structure. Then the component $V(\lm)_{U}^{(C)}\subset C$
is the locus where the Drinfeld level structure $\alpha$ 
   factors through any quotient  
 $$\lm_{n}^{*}\colon (\vpi_{v}^{-n}\OO_{F_{v}}/\OO_{F_{v}})^{2} \to \vpi_{v}^{-n}\OO_{F_{v}}/\OO_{F_{v}}$$  in the  $ \OO_{F_{v}}^{\ts} \ts U_{v}/U_{n,v}$-class determined by $\lm_{n}^{*}$. 

\paragraph{CM points, their reductions, and basins}
We consider the base-change (ind-)scheme $X_{F_{v}}^{\rm CM}\subset X_{F_{v}}$ and refer to its points as the CM points in $X_{F_{v}}$.  It is well known that if $z\in X_{F_{v}}^{\rm CM} $ is a  point with CM by $(E, \psi)$, its reduction is a supersingular point if and only if $v$ is nonsplit in $E$.  Assume that this is the case. The ring  $\OO_{z}:={\rm End}_{\OO_{F}\text{-Mod}}(\cG_{z})$ is  the $\OO_{F_{v}}$-order in $E_{v}$ with unit group  $ \psi_{v}^{-1}(U_{0, v})$. Such orders are classified: there is a unique integer $s\geq 0$, called the ($v$-)\emph{conductor} of $z$, such that 
$$\OO_{z} =  \OO_{E_{v}}[s] := \OO_{F_{v}}+\vpi_{v}^{s}\OO_{E_{v}}.$$
By \cite[\S~5]{gross}, if $z$ has $v$-conductor $s$ there  is an $ \OO_{E_{v}, s}^{*}$-linear isomorphism, unique up to $\OO_{E_{v}}[s]^{\ts} $,
\beq\lb{beta}\beta \colon \OO_{E_{v}}[s]^{*} \to \cG_{z}(\baar{F}_{v}), \eeq
where  $( -)^{*} = \eqref{pontry}$.

\subsection{Partition into basins and the main result}
Let $z\in X_{F_{v}}^{\rm CM}$ be a point with CM by $E$ of $v$-conductor $s$,  keep the assumption that $E_{v}$ is nonsplit, and let $\beta$ be an $E$-level structure   on $\cG_{z}$ as in \eqref{beta}. 
Let $\alpha\colon (F_{v}/\OO_{v})^{2} \to \cG_{z}(\baar{F_{v}})$ be any isomorphism extending the level structure $\alpha_{z}$ on $\cG_{z}$, let $\beta$ be as in \eqref{beta}, and let 
\beq \lb{tau*}
\tau^{*}:=\beta^{-1}\circ \alpha\colon (F_{v}/\OO_{F_{v}})^{2}\to  
\OO_{E_{v}}[s]^{*}.\eeq
Let $\tau\colon E\to F^{2}$ be the $F$-linear extension of the Pontryagin-dual map to $\tau^{*}$. The class
$$ L(\tau):= U_{v}. L_{s}(\tau), \qquad \text{where}\quad  L_{s}(\tau):= \tau(\OO_{E_{v}}/\OO_{E_{v}[s]})  \subset (\vpi_{v}^{-s}\OO_{F_{v}}/\OO_{F_{v}})^{2},$$
is a well-defined $U_{v}$-orbit of lines in $(\vpi_{v}^{-s}\OO_{F_{v}}/\OO_{F_{v}})^{2}$.

If $U_{v}\supset U_{s, v}$, we may compare this with a corresponding invariant of an irreducible component $V(\lm)_{U}^{(C)}$ of $\X_{U, {\bf F}_{v}}$, defined as  
$$L(\lm):= U_{v} L_{s}(\lm), \qquad  \text{where}\quad  L_{s}(\lm) := \lm (\vpi_{v}^{-s}\OO_{F_{v}}/\OO_{F_{v}}) \subset (\vpi_{v}^{-s}\OO_{F_{v}}/\OO_{F_{v}})^{2}.$$

   \begin{defi*}  Let $V=V(\lm)_{U}^{(C)}\subset  \X_{U,{\bf F}_{v}} $ be an irreducible component. 
    The \emph{basin}
$$B_{V}$$
of $V$ is the set of CM points $z\in X_{F_{v}}^{\rm CM}$ such that either 
  \begin{itemize}
   \item $z$ has CM by $E$ with 
$E_{v}/F_{v}$ split, and the reduction of $z$ belongs to $V\cap \X_{{\bf F}_{v}}^{\rm ord}$, or 
\item $z$ has CM by $E$ with 
$E_{v}/F_{v}$  nonsplit, the  conductor $s$ of $z$ is such that  $U_{s,v}\subset U_{v}$, the reduction of $z$ belongs to the connected component $C$, and with notation as above
\beq \lb{Ln}
L(\tau)  = L(\lm).\eeq
      \end{itemize}
   \end{defi*}
A geometric description of basins is given in Proposition \ref{compare basins}.
  
\subsubsection{The main theorem} Let $X_{v}^{\an}$ be the  Berkovich analytic space attached to $X_{F_{v}}$. 
\begin{theoA} \lb{main thm} Let $(z_{n})$ be a sequence of points in $X_{F_{v}}^{\rm CM}$, and denote by the same name the image sequence in $X_{v}^{\an}$. Assume that the  $v$-conductor of $z_{n}$ tends to infinity.

The  sequence $(z_{n})$ has a limit if and only if it is eventually supported in  a single basin. If this is the case for the basin $\cB_{V}$ of the irreducible component  $V\subset \X_{{\bf F}_{v}}$, then 
$$\lim_{n\to \infty} z_{n}=\zeta_{V}$$
in $X_{v}^{\an}$.
\end{theoA}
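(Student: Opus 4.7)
The strategy is to characterize $\zeta_{V}$ via Berkovich specialization and to translate the basin condition into intersection multiplicities on an integral model. Since $X_{v}^{\an}$ is compact and Hausdorff, it suffices to prove convergence basin by basin: for each sequence $(z_{n})$ with $z_{n}\in\cB_{V}$ and $v$-conductor tending to infinity, one must show $z_{n}\to \zeta_{V}$. The other direction then follows because distinct $\zeta_{V}$ are separated in $X_{v}^{\an}$, so a sequence oscillating between two basins cannot converge. A neighbourhood basis of $\zeta_{V}$ is given by the preimages $\mathrm{red}^{-1}(V\setminus S)$ for $S\subset V$ a finite set of closed points; hence $z_{n}\to\zeta_{V}$ is the assertion that for any such $S$ and any other component $V'\subset\X_{{\bf F}_{v}}$, one has $\mathrm{red}(z_{n})\in V\setminus S$ and $\mathrm{red}(z_{n})\notin V'$ for all sufficiently large $n$.

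In the split case $E_{v}/F_{v}$ split, every CM point is ordinary and reduces to a smooth closed point of $\X_{{\bf F}_{v}}$ lying on a unique irreducible component $V$, which is the basin by definition. The formal fibre of $\mathrm{red}$ over such a smooth point is an open unit disc in $X_{v}^{\an}$, and the universal ordinary formal $\OO_{F_{v}}$-module furnishes a canonical (Serre--Tate-type) coordinate on it. I would use this coordinate to show that CM points of conductor $s$ spread out across the disc, so that every proper sub-disc contains only finitely many $z_{n}$; this forces the reductions to escape every finite set of closed points of $V$ and yields $z_{n}\to\zeta_{V}$.

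The nonsplit case is the heart of the matter. I would pass to the formal completion of $\X$ at a supersingular point---a Lubin--Tate deformation space $\cM$ of a formal $\OO_{F_{v}}$-module of height $2$ carrying a Drinfeld level structure. When $U_{v}=U_{n,v}$, the special fibre $\cM_{{\bf F}_{v}}$ is the reduced union of the vertical components $V(\lambda)$ indexed by $\lambda\in {\bf P}^{1}(\OO_{F_{v}}/\vpi_{v}^{n})$, meeting at supersingular points. By Gross's theory of quasi-canonical liftings, the CM points of conductor $s$ with CM by $E$ are precisely the generic-fibre points of regular one-dimensional formal subschemes $\cZ_{s}\subset\cM$ (the quasi-canonical curves), parametrized by class-group data and carrying the level datum $\tau$ of~\eqref{tauintro}.

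The decisive input would be a new intersection formula on the special fibre: for $\cZ_{s}$ with level datum $\tau$, the intersection multiplicity $\langle\cZ_{s},V(\lambda)\rangle_{\cM}$ tends to infinity with $s$ exactly when the basin condition $L_{n}(\tau)=L_{n}(\lambda)\pmod{U_{v}}$ is satisfied, and remains uniformly bounded otherwise. Translating back to $X_{v}^{\an}$, unbounded intersection with $V(\lambda)$ is equivalent to the generic-fibre point of $\cZ_{s}$ entering arbitrarily small Berkovich neighbourhoods of $\zeta_{V(\lambda)}$ along the branch of the analytic tube adjacent to $V(\lambda)$; together with the boundedness statement, this delivers the required basin-by-basin convergence. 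The main obstacle is the intersection formula itself, which I expect to require a delicate analysis of the restriction of the universal Drinfeld level structure along $\cZ_{s}$, combining Gross's explicit description of $\cZ_{s}$ as the formal spectrum of a ring of integers in a specific local extension with a combinatorial study of how the linearised datum $\tau\colon\OO_{F_{v}}+\vpi_{v}^{s}\OO_{E_{v}}\to\OO_{F_{v}}^{2}$ singles out a unique vertical component of $\cM_{{\bf F}_{v}}$.
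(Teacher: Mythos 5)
Your overall strategy---reducing convergence to $\zeta_V$ to a growth/boundedness dichotomy for intersection multiplicities $m_{\baar{z}}(Z_n,V')$ in Lubin--Tate deformation spaces, driven by Gross's quasicanonical liftings and a new intersection formula---matches the paper's proof. But there is a genuine gap in the bridge between intersection theory and the Berkovich topology, which is exactly where the paper invests its care in \S\ref{sec2}.

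You assert that the sets $\mathrm{red}^{-1}(V\setminus S)$, $S\subset V$ finite, form a neighbourhood basis of $\zeta_V$, so that $z_n\to\zeta_V$ would mean $\mathrm{red}(z_n)\in V\setminus S$ and $\mathrm{red}(z_n)\notin V'$ for $V'\neq V$ eventually. Both clauses fail for the sequences that matter: in the nonsplit case the reductions $\baar{z}_n$ are supersingular, hence fixed (they escape no finite $S$), and they lie on \emph{every} irreducible component of their connected component (so $\mathrm{red}(z_n)\notin V'$ never holds). More structurally, at a supersingular $\baar{z}$ the residue tube $\mathrm{red}^{-1}(\baar{z})$ contains points arbitrarily close to each $\zeta_{V'}$ with $\baar{z}\in V'$; since $\mathrm{red}^{-1}(\baar{z})\subset\mathrm{red}^{-1}(V\setminus S)$ whenever $\baar{z}\notin S$, that set is not contained in any small neighbourhood of $\zeta_V$, while if $\baar{z}\in S$ it excludes your whole sequence. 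Reduction alone does not see the topology near $\zeta_V$. What is needed is a neighbourhood basis built from compacts $D(\xi_{V'},1-\eps)=\{\,|\phi_{V'}|\le 1-\eps\,\}$, with $\phi_{V'}$ a local generator of the ideal of $V'$ coming from a factorisation $\vpi=\prod_{V}\phi_V^{e_V}$, together with the seminorm identity $|\phi_{V'}(z_n)|=|\vpi|^{m_{\baar{z}}(Z_n,\mathrm{div}\,\phi_{V'})/m_{\baar{z}}(Z_n,\mathrm{div}\,\vpi)}$. Only through this identity does the ratio $m_{\baar{z}}(Z_n,V')/m_{\baar{z}}(Z_n,\X_{{\bf F}_{v}})\to 0$ translate into $z_n$ eventually leaving each $D(\xi_{V'},1-\eps)$. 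You do claim the equivalence at the end (``unbounded intersection $\Leftrightarrow$ entering arbitrarily small Berkovich neighbourhoods of $\zeta_{V(\lambda)}$''), but you supply no mechanism for it, and your earlier reduction-map criterion, read literally, would say that no supersingular sequence converges at all.
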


It is perhaps remarkable  that, while the setup of the theorem only involves the Berkovich space $X_{v}^{\an}$,  the geometry  of the canonical integral model emerges to play an essential role in the conclusion. 

{\begin{figure}[h] \lb{fig1}
$$\includegraphics[width=.92\textwidth]{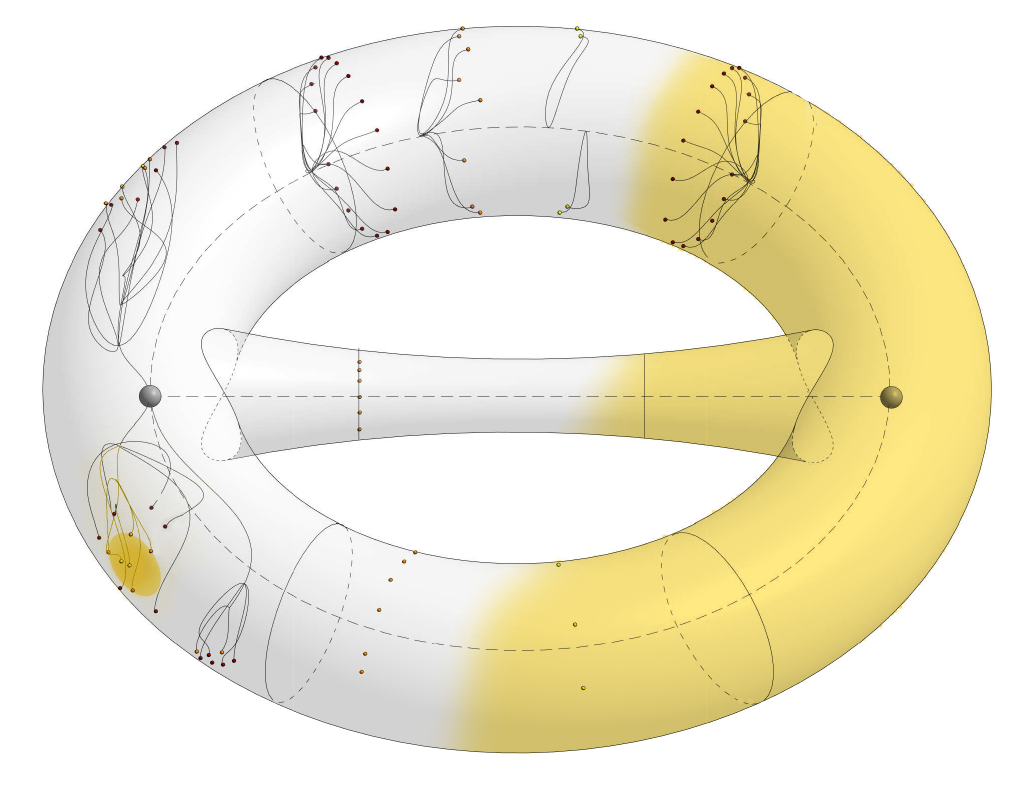} $$
\caption{{An illustration of Theorem \ref{main thm}, in the case where $X$ has genus~$2$, and $\X_{{\bf F}_{v}}$ consists of two rational irreducible components, intersecting in three supersingular points. The dual graph of $\X_{{\bf F}_{v}}$, which canonically embeds in $X_{v}^{\an}$ as a deformation retract \cite[\S 4]{ber}, is drawn  as  the dashed lines in the interior of a $2$-torus; the two thick points are the Shilov points of the components. The type-$1$ points of $X_{v}^{\an}$ are thought of as lying  on the surface of the $2$-torus (those with supersingular reduction are in the central part of the figure).
 In yellow, the complement of a typical open neighbourhood of the left  Shilov point (cf. Proposition \ref{fund sys}). 
In shades from yellow (lower conductor) to dark red (higher conductor), some CM points of $X_{F_{v}}$, drawn as Galois orbits of geometric points. }} \lb{fig1}
\end{figure} }

Theorem \ref{main thm} for the modular curve $X_{0}(1)$  was also independently, but earlier, obtained by Herrero--Menares--Rivera-Letelier \cite[Theorem A]{herrero}; see also \cite{Ric}.\footnote{The author conceived Theorem \ref{main thm} and the argument described in \S~\ref{sec 12}, in slightly different language, in the summer of 2018; soon thereafter he became aware of the 2016 thesis of S. Herrero, where \cite[Theorem A]{herrero} was first proved.}  From the point of view of the present paper (see the next subsection), for trivial level at $v$ the result is a relatively straightforward consequence of the theory of \cite{gross}, whereas more substantial work is needed to discern among the different Shilov points present in the case of higher level.

\subsection{Idea of proof, intersection formula, and organisation of the paper}\lb{sec 12} To illustrate the proof of Theorem \ref{main thm}, 
consider first a closed point  $a\in  |\X_{{\bf F}_{v}}|$  which is nonsingular in $\X_{{\bf F}_{{v}}}^{\rm red}$: this is the case if the $v$-level of $X$ is minimal ($U_{v}\cong \GL_{2}(\OO_{F_{v}})$) or $a$ is ordinary. Then ${\rm red}^{-1}(a)\subset X_{v}^{\an}$ is a (twisted) analytic open unit disc, say with coordinate $u$,  and Gross's theory of quasicanonical liftings \cite{gross} shows that CM points of $v$-conductor $s$ inside it lie in a circle $|u|=1-\eps_{s}$ with explicit $\eps_{s}\to 0$ as $s\to \infty$.
This immediately  implies that   any limit point of $(z_{n})$ is one of the points $\zeta_{V}$.  When the $v$-level of $X$ is minimal, irreducible and connected components coincide and this argument is enough to conclude.

The above description of the distribution of  CM points in nonsingular residue discs also implies the following: for each closed point $a\in|\X_{{\bf F}_{v}}|$, the  intersection multiplicity at $a$ between  $\X_{{\bf F}_{v}}$  and the Zariski closure $Z_{n}$ of those points $z_{n}$  of the sequence  that reduce to $a$ tends to infinity with $n$. 
As the intersection multiplicity between different irreducible components is bounded, this means that for  large $n$, exactly  one of the irreducible components of $\X_{{\bf F}_{v}}$ intersects $Z_{n}$ with high multiplicity -- equivalently, the corresponding Shilov point is close to $z_{n}$.  The argument described so far is developed in \S~\ref{sec2}.

\medskip

We thus need to compute the intersection multiplicity at $a\in |\X_{{\bf F}_{v}}|$ between irreducible components of $\X_{{\bf F}_{v}}$ and Zariski closures of CM points, the nontrivial case being when $a$ is supersingular.   Hence the rest  of   the paper is dedicated to proving a new formula for this multiplicity.

\medskip

We may state the formula using the  following notation:
\begin{itemize}
\item we let $q:= |{\bf F}_{v}|$, and denote by $|\cd |_{E_{v}}$ the absolute value on $E_{v}$ normalised by $|\vpi_{v}|=q^{-2}$;
\item we  choose any $\OO_{E_{v}}$-generator $\delta^{-1}\in \cD_{E_{v}}^{-1}:= \Hom_{\OO_{F_{v}}}(\OO_{E_{v}}, \OO_{F_{v}})$, and for $\tau\in \Hom_{F_{v}}(E_{v}, F_{v}^{2})$ we view the element  $\tau \delta\in \Hom_{\OO_{F_{v}}}(\OO_{E_{v}}, \OO_{F_{v}}^{2})\ot_{\OO_{E_{v}}} \cD_{E_{v}}\ot_{\OO_{F_{v}}}F_{v}=E_{v}^{2}$ as a column vector over $E_{v}$;
\item for $\lm \colon \OO_{F_{v}}\into \OO_{F_{v}}^{2}$, we denote by $\lm^{\perp}\colon \OO_{F_{v}}^{2}
\onto \OO_{F_{v}}$ any surjection such that $\Ker(\lm^{\perp})={\rm Im}(\lm)$; we view $\lm^{\perp}$ as a row vector in $F_{v}^{2}$;
\item if $\X'$ is any $2$-dimensional Noetherian formal scheme, and $x\in \X'$ is a regular closed point, we define the intersection multiplicity at $x$ of two irreducible subschemes $Z_{1}$, $Z_{2}$ intersecting properly (that is, such that the quantity below is finite) by
\beq\lb{int prop}
m_{x}(Z_{1}, Z_{2})={\rm length}_{\OO_{\X', x}} \OO_{\X', x}/(\cI_{Z_{1},x} + \cI_{Z_{2}, x})\eeq
where  $\cI_{Z_{i}}\subset \OO_{\X'}$ is the ideal sheaf defining $Z_{i}$.
\end{itemize}

\begin{theoA}\lb{thm B} Let $z\in X_{F_{v}}^{\rm CM}$ be a point with CM by $(E, \psi)$ of $v$-conductor $s$, such that $E_{v}/F_{v}$ is nonsplit. Let $Z$ be the Zariski closure of $z$ in $\X$, and let $\tau^{*}$
 be attached to $z$ as in \eqref{tau*}. Let $V=V(\lm)_{U}^{(C)}$  be an irreducible component through the reduction $\baar{z} \in \X_{{\bf F}_{v}} $ of $z$ (endowed with the reduced structure).

 Then
$${m_{\baar{z}}( Z, V) \over [\OO_{E_{v}}^{\ts}:\psi_{v}^{-1}(U_{v}) ]\cd [\OO_{F_{v}}^{\ts}:U_{v}\cap \OO_{F_{v}}^{\ts}]}
=
[{\bf F}_{v}(\baar{z}): {\bf F}_{v}]\cd 
c_{E_{v}} \cd  
q^{s }\cd
 \int_{U_{v}}  | \lm^{\perp}  g   \tau \delta  |_{E}^{-1}\, dg.$$
Here, the constant $c_{E_{v}}=1$ if $E_{v}/F_{v}$ is unramified and $c_{E_{v}}=1+q^{-1}$ if $E_{v}/F_{v} $ is ramified,
 and
   $dg$ is the Haar measure such that $\vol(\GL_{2}(\OO_{F_{v}}))=1$.
   \end{theoA}

 If we call  \emph{special cycles} in $\X$ those that are combinations of vertical components and  Zariski closures of CM points, then our formula  completes the calculations of intersections of special cycles in $\X$, where the case of intersections of cycles of the same type was treated respectively by Katz and Mazur (\cite[\S~13.8]{KM}, for vertical cycles) and, very recently,  Qirui Li (\cite{li}, for CM cycles). It will be clear that our method also allows to recover (and find the most general form of)  the Katz--Mazur intersection formula for vertical cycles.
 
 The intersection  problem in $\X$ is equivalent to one in a Lubin--Tate space $\cM_{U_{v}}$ of deformations of the  unique formal $\OO_{F_{v}}$-module $\cG_{F}$ of height $2 $ over $\baar{\bf F}_{v}$. We solve it by adapting the beautiful  method   devised by Li \cite{li} (and in turn inspired by a work of Weinstein \cite{weinstein}): after passing to infinite level in the Lubin--Tate tower, our intersection problem can be compared to an easier intersection problem in the formal group $\cG_{F}^{2}$.

The cycles in Lubin--Tate towers and formal groups are defined in \S\ref{sec4}. The computation of intersections and the completion of the proofs is in \S\ref{sec5}.

\subsubsection{Related work,  applications, and future directions} A corollary of Theorem \ref{main thm} is the following. Assume that the sequence $(z_{n})$ is convergent in $X_{v}^{\rm an}$, and let $\zeta_{V}$ be its limit. Then   the intersection multiplicity in $\X_{v}$ between a given horizontal divisor $H$ and the Zariski closures of  $z_{n}$  is eventually equal to  $m(H, V)$. A weak generalisation of this result is the key new ingredient in the recent proof of the $p$-adic Gross--Zagier formula at nonsplit primes \cite{nonsplit}, which was our main source of motivation.

Problems of equidistribution in Berkovich spaces have been considered in the context of arithmetic dynamics and Arakelov theory (see for example \cite{CL} and its  citation orbit). In fact,  sequences of CM points of increasing $p$-conductor may be obtained from the dynamics of some Hecke operator at $p$, for example the operator $U_{p}$ on a modular curve of Iwahori level at $p$ (more precisely, one should consider the forward-orbit of a given CM point).  As far as we know the phenomenon of  multiple, but finitely many, limit measures is exposed here for the first time, and it is a natural question to try and  understand it within a more general framework.

On the other hand, the $p$-adic dynamics of \emph{prime-to-$p$} Hecke operators on CM points, and the distribution of  CM points  whose $p$-conductors stay bounded,  have been recently been studied by Goren--Kassaei \cite{GK} 
and in the aforementioned  \cite{HMRL-k}, the latter work having some new arithmetic applications; it would be interesting to extend those results to our context.

Finally, it would also be very interesting to extend the intersection formula of Theorem \ref{thm B}  to higher-dimensional situations, and to explore its geometric implications and possible applications.

\subsection*{Acknowledgements} I would like to thank Qirui Li and Sebasti\'an Herrero for  correspondence on their respective works, and Christian Johansson and Michael Temkin for correspondence on $p$-adic geometry. I am also grateful  to Simone Dell'Ariccia for Figure \ref{fig1}, and to Francesco Maria Saettone and the referees for helpful comments.

\section{Reduction to intersection multiplicities}
\lb{sec2}
In this section, we prove Theorem \ref{main thm} in the case of minimal level, and reduce it to a statement on intersection multiplicities in general.

\subsection{Geometry of Berkovich curves}
The standard reference on Berkovich spaces is \cite{ber}, see especially \S\S~2-3.  

Fix for this subsection a discretely valued field $K$, with uniformiser $\vpi$,   ring of integers  $\OO_{K}$, and residue field $k$. We denote $q:= |\vpi|^{-1}$. 

\subsubsection{Generalities}
 Let $X $ be a compact Hausdorff strictly $K$-analytic Berkovich space  over $K$, generic fibre of a  topologically finitely presented  formal scheme $\X$  flat over $\Spf \OO_{K}$.  
 If $\X= \Spf A$ may be taken to be affine we say that $X$ is affinoid. In this case, denoting $A_{K}= A\ot_{\OO_{K}}K$,  the points   $x\in X$ are in canonical bijection with the multiplicative seminorms, denoted  $$|\cd(x)|\colon A_{K} \to \R, $$ 
extending the absolute value of $K$. The set of maximal ideals  ${\rm Max}\, A_{K}\subset \Spec A_{K}$ is embedded in $X$ via the map $x\mapsto [\phi\mapsto |\phi(x)|]$, where $|\cd| $ is the extension of $K$ to $K(x)$. The image of  ${\rm Max}\, A_{K}$ is called the set of type-$1$ points of $X$. It is dense in $X$. In general, if $X=\bigcup X_{i} $ is a cover of $X$ by affinoids, the set of type-$1$ points is the union of the sets of type-$1$ points of the $X_{i}$; it is well-defined, and dense in $X$.

\subsubsection{Reduction map and Shilov points}
There is a reduction map  to the special fibre of $\X$, 
$${\rm red}\colon X\to |\X_{k}|,$$
sending type-$1$ points to closed points. 
If $V\subset \X_{k}$ is an irreducible component
and $\xi_{V}\in \X_{k} $ is its generic point, then by \cite[Proposition 2.4.4]{ber} there is a unique point $\zeta_{V}\in X$ with  $${\rm red}(\zeta_{V})=\xi_{V}. $$
 We call $\zeta_{V}$ the \emph{Shilov point} of $V$.

\subsubsection{Type-1 points on curves as intersection multiplicities}
Suppose for the rest of this subsection that $X$ is a strictly $K$-analytic curve (that is $X$ is as above and $\dim X=1$), and that $\X$ is regular.  The following notation will be used throughout the paper.

\begin{enonce}{Notation} \lb{not Z} \emph{If $z_{?}$ is a type-$1$ point of $X$ (where `?' denotes an arbitrary decoration), 
we denote by  $$Z_{?}\subset \X$$ the closed formal subscheme defined as follows. Let  $A$ be topologically finitely presented flat $\OO_{K}$-algebra with an embedding $\Spf A\into \X$ such that $z_{?}$ corresponds to a point  $\frakm_{{?}}\in{\rm Max}\, A_{K}\subset X$; then $Z_{?}\subset \Spf A\subset \X$ is the image of $\Spf \OO_{K(z_{?})}$ via the map $A\to \OO_{K(z_{?})}$ deduced from $A_{K}\to A_{K}/\frakm_{{?}} =K(z_{?})$. We denote by $\baar{z}_{?}\in \X_{k}$ the image of the closed point of $\Spf \OO_{K(z)}$; by the definition of the reduction map in \cite[\S~2.4]{ber}, we have ${\rm red}(z_{?})=\baar{z}_{?}$.}

\emph{If $X_{0}$ is a proper smooth curve over $\Spec K$, $\X_{0}$ is a regular model over $\Spec \OO_{K}$, and $z_{0,?}$ is a closed point of $X_{0}$, we denote by $Z_{0,?}$ the Zariski closure of $z_{0,?}$ in $\X_{0}$, and by $\baar{z}_{0,?}$ the reduction of $z_{0,?}$ in $\X_{0, k}$. It is clear from the definitions  that  if $\X$ is the formal completion of $\X_{0}$ along the special fibre, $X=X_{0}^{\rm an}$, and $z_{?}$ is the type-$1$ point corresponding to $z_{0,?}$, then $Z_{?}$ is the formal completion of $Z_{0,?}$. For this reason we may think of $Z_{?}$ as the `Zariski closure'  of $z_{?}$.\footnote{This interpretation would acquire a more literal meaning if working in the context of adic spaces.}
}
\end{enonce}

\begin{lemm}\lb{seminorm int}
 Let $z\in X$ be a type-$1$ point, and assume that $Z\subset \X$ is normal. Let $\Spf A\subset \X$ be an affine neighbourhood of $Z$.
Then for every $\phi\in A$ we have
\beqq
|\phi(z)| = q^{-m(Z, \divisor(\phi))/ m(Z, \divisor(\vpi))},\eeqq
where in the right-hand side, $m:=m_{\baar{z}}$ is the intersection multiplicity at $\baar{z}$ as defined in \eqref{int prop}.
\end{lemm}
\begin{proof}
Write $Z=\Spf A/\frakp$ for some prime ideal $\frakp$, and let $\baar\frakm\supset \frakp$ be the maximal ideal of $A$ corresponding to $\baar{z}$.
After applying the function $-\log_{q}$, the left-hand side equals 
$${{\rm length}_{\OO_{K(z)}}\OO_{K(z)}/(\phi(z)) \over{\rm length}_{\OO_{K(z)}}\OO_{K(z)}/(\vpi)},$$ 
and the right-hand side equals 
$${{\rm length}_{A_{\baar\frakm}} A_{\baar\frakm}/(\frakp +(\phi)) \over {\rm length}_{A_{\baar\frakm}} {A_{\baar\frakm}}/(\frakp +(\vpi))}.$$ 
Now $A_{\baar\frakm}/\frakp\into K(z)$  is an integrally closed  $\OO_{K}$-algebra with fraction field $K(z)$; it follows that $A_{\baar\frakm}/\frakp\cong \OO_{K(z)}$, hence the two quantities above are equal.
\end{proof}

\subsubsection{Residue discs and neighbourhoods of Shilov points}
If  $a\in |\X_{k}|$ is a closed point, we denote by 
$$\Delta(a, 1- \eps),\qquad
 \eps>0, $$   an arbitrary increasing collection of compact subsets of ${\rm red}^{-1}(a)$ such that $\bigcup_{\eps > 0} \Delta(a, 1-\eps)= {\rm red}^{-1}(a)$. (The reader may keep in mind the case where  $a$ is a nonsingular point and $k$ is algebraically closed: then ${\rm red}^{-1}(a)$ is an open unit disc and one may take the  $\Delta(a,1- \eps)$  to be discs of radius $1-\eps$.)
 
 We define a similar collection of compact subsets of  $X$  indexed by the open points of $\X_{k}$. Thus let 
$V$ be an irreducible component  of $\X_{k}$  and let $\xi_{V}$ be its generic point.
 As $\X$ is regular, hence locally factorial, there is a finite  open cover by flat affine formal schemes 
  $\X=\bigcup_{\Y \in I} \Y$ such that  in  $\OO(\Y)$ we may factor
\beq\lb{factoriz}
\vpi = \prod_{V} \phi_{V\cap \Y}^{e_{V \cap \Y}} ,
\eeq
where    $\phi_{V\cap \Y}$ is a  generator of the (height-$1$ prime, or unit) ideal   $\frakp_{V\cap \Y}\subset\OO( \Y)$  and $e_{V\cap \Y}\geq 0$.
Let $Y\subset X$ denote  the Berkovich generic fibre of $\Y$.
We define 
\beq\lb{Dxiy}
\Delta(\xi_{V\cap \Y}, 1-\eps) &:=\{x\colon  |\phi^{e_{V\cap \Y}}_{V\cap \Y}(x)| \leq 1-\eps\}\subset Y \\
\Delta^{\circ}(\xi_{V\cap \Y}, 1-\eps) &:=\{x\colon  |\phi^{e_{V\cap \Y}}_{V\cap \Y}(x)| < 1-\eps\}\subset Y 
.\eeq
We denote by $e_{V}$ the multiplicity of $V$ in $\X_{k}$, that is, $e_{V}=e_{V\cap \Y}$ for any $\Y$ as above that is contained in the connected component of $V$ in $\X_{k}$. 

\begin{lemm} \lb{Dxi} For $\eps\in (0, 1)$ and $?\in \{\emptyset, \circ\}$,\footnote{Throughout this paper, we use the symbol `$\emptyset$' to denote `no symbol' when referring to pieces of notation.}  there  is a unique  subset 
$$\Delta^{?}(\xi_{V}, 1-\eps) \subset X$$
such that for all sufficiently small $\OO_{K}$-flat  affine open $\Y\subset \X$ with generic fibre $Y$, 
$$\Delta^{?}(\xi_{V}, 1-\eps)\cap Y =\Delta^{?}(\xi_{V\cap \Y}, 1-\eps),$$
where $\Delta^{?}(\xi_{V\cap \Y}, 1-\eps)$ is as in $\eqref{Dxiy}$.
Moreover:
\begin{enumerate}
\item if $z\in X$  reduces to a  point $a\in \X_{k} - V$, then  $z\notin \Delta(\xi_{V}, 1-\eps)$ for any $\eps>0$;
\item for $r<q^{-1}$, the sets  $\Delta(\xi_{V}, r)$ are empty;
\item the Shilov point  $\zeta_{V}$ belongs to $\Delta(\xi_{V}, q^{-1})$.
\end{enumerate}
\end{lemm}
\begin{proof} For the first part, it suffices to observe the trivial facts  that if $\Y'$ is a standard open affine subset of $\Y$ then the image in $\OO(\Y')$ of a  generator $\phi_{V\cap \Y}$ for $\frakp_{V\cap \Y}$ (respectively of the factorisation \eqref{factoriz}) is a generator for $\frakp_{V\cap \Y'}$ (respectively a factorisation of the same form). 

 For the additional statements, we may replace $X$ by any open affinoid $Y$ as above  such that $V\cap \Y\neq \emptyset$.  For the first statement, we note that under our assumptions we have $|\phi_{V}(z)|=1$ by definition of the map ${\rm red}$ in \cite[\S 2.4]{ber}. It follows from this and
 \eqref{factoriz} 
that $|\phi_{V}^{e_{V}}(\zeta_{V})| \leq |\vpi(\zeta_{V})|= q^{-1}$, proving the third statement.  The second statement is also immediate from \eqref{factoriz}.
\end{proof}

Denote by $|\X_{k}|^{0}\subset |\X_{k}|$ the set of closed points. 
\begin{prop} \lb{fund sys}
Let $X$ be a compact strictly $K$-analytic Berkovich curve, generic fibre of a regular  topologically finitely presented   formal scheme $\X$ flat over $\Spf \OO_{K}$. 
Let $V$ be an irreducible component of $\X_{k}$ and let  $\zeta_{V}\in X$ be the Shilov point of $V$. With notation as above, each of the systems of   open subsets of $X$
\beqq
 U( \zeta_{V}; A, \eps)&:= X- \bigcup_{a\in A}\Delta(a ,1-\eps), \qquad\qquad & A\subset |\X_{k}|-\{\xi_{V}\} \text{ finite}, \ \eps>0, \\
U'( \zeta_{V}; A', \eps)&:= \Delta^{\circ}(\xi_{V}, q^{-1}+\eps)- \bigcup_{a\in A'}\Delta(a ,1-\eps), \qquad\qquad & A'\subset |\X_{k}|^{0} \text{ finite}, \ \eps>0 ,
\eeqq
forms a fundamental system  of neighbourhoods of $\zeta_{V}$.
\end{prop}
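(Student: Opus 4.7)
The plan is to verify both (1) every $U(\zeta_V; A, \eps)$ is an open neighbourhood of $\zeta_V$ for $\eps$ small, and (2) every open $W \ni \zeta_V$ contains some $U(\zeta_V; A, \eps)$. Openness is immediate: the complement $\bigcup_{a \in A} D(a, 1-\eps)$ is a finite union of compact sets. Membership of $\zeta_V$ is also easy: for closed $a \in A$ we have $D(a, 1-\eps) \subset \mathrm{red}^{-1}(a)$ while $\mathrm{red}(\zeta_V) = \xi_V \neq a$; for $a = \xi_{V'}$ with $V' \neq V$, Lemma \ref{Dxi} yields $\zeta_V \notin D(\xi_{V'}, 1-\eps)$ for $\eps$ small.

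For (2), fix an open $W \ni \zeta_V$ and set $F := X \setminus W$, which is closed and hence compact. By the uniqueness of the Shilov point, $\mathrm{red}(x) \neq \xi_V$ for every $x \in F$. I will construct, for each such $x$, an open neighbourhood $N_x$ contained in $D(a_x, 1-\eps_x)$ for some $a_x \in |\X_k| \setminus \{\xi_V\}$ and some $\eps_x > 0$. Compactness then yields a finite subcover $F \subset \bigcup_{i=1}^k N_{x_i}$, and setting $A := \{a_{x_1}, \ldots, a_{x_k}\}$ and $\eps := \min_i \eps_{x_i}$, the monotonicity $D(a, 1-\eps') \subset D(a, 1-\eps)$ for $\eps \leq \eps'$ gives $F \subset \bigcup_{a \in A} D(a, 1-\eps)$, hence $U(\zeta_V; A, \eps) \subset W$.

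The construction of $N_x$ splits by the type of $\mathrm{red}(x)$. If $\mathrm{red}(x)$ is a closed point $a_x$, then $x$ lies in the open tube $\mathrm{red}^{-1}(a_x) = \bigcup_\eps D(a_x, 1-\eps)$; taking the natural family defined locally by $\{|f_i| \leq 1-\eps\}$ for $f_i$ generating the maximal ideal of $a_x$ (in parallel with \eqref{Dxiy}), one finds $\eps_x > 0$ with $x$ in the interior of $D(a_x, 1-\eps_x)$. If instead $\mathrm{red}(x) = \xi_{V'}$ for some $V' \neq V$, then $x = \zeta_{V'}$, and the strict inequality $|\phi_{V'}^{e_{V'}}(\zeta_{V'})| < 1$ appearing in the proof of Lemma \ref{Dxi}, combined with continuity of this seminorm, provides an open neighbourhood of $\zeta_{V'}$ inside $D(\xi_{V'}, 1-\eps_x)$ for $\eps_x$ small.

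The main subtle point is the first case above: although the paper permits an arbitrary increasing family $D(a, 1-\eps)$, producing an open $N_x$ inside a single $D(a_x, 1-\eps_x)$ requires the family to behave continuously enough. For the natural choice by local equations this is transparent, and with it the remainder is a routine compactness argument.
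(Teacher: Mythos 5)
Your proof is correct, and it is actually more complete than the one in the paper. The paper's argument is very terse: it only checks that $\bigcap_{A,\eps} U(\zeta_V; A, \eps) = \{\zeta_V\}$ (closed-point residue tubes are excluded because each $D(a,1-\eps)$ exhausts $\mathrm{red}^{-1}(a)$, and the other Shilov points are excluded by Lemma \ref{Dxi}). That identity alone does not formally imply the fundamental-system property: in a compact Hausdorff space a downward-directed family of open sets with singleton intersection and compact complements need not be a neighbourhood base of that singleton. The missing ingredient is precisely the compactness argument you carry out --- covering $F = X \setminus W$ by open sets $N_x$ each sitting inside some $D(a_x, 1-\eps_x)$, extracting a finite subcover, and then collecting indices and taking $\eps = \min \eps_{x_i}$. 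Your case split on $\mathrm{red}(x)$ (closed point versus generic point $\xi_{V'}$ with $x = \zeta_{V'}$) mirrors exactly the dichotomy the paper invokes, and the use of the strict inequality $|\phi_{V'}^{e_{V'}}(\zeta_{V'})| < 1$ from the proof of Lemma \ref{Dxi} to get an open neighbourhood of $\zeta_{V'}$ inside $D(\xi_{V'}, 1-\eps_x)$ is the right move.

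Your final observation is a legitimate one and worth retaining: as literally stated, with $D(a,1-\eps)$ for closed $a$ allowed to be an ``arbitrary increasing collection of compact subsets'' exhausting $\mathrm{red}^{-1}(a)$, the proposition would fail, since a compact subset of $\mathrm{red}^{-1}(a)$ need not lie in any single $D(a,1-\eps)$. What is needed (and clearly intended, in parallel with the explicit construction \eqref{Dxiy} for generic points) is that the family be exhaustive, e.g.\ cut out by $\{\max_i |f_i| \leq 1-\eps\}$ for local generators $f_i$ of the maximal ideal at $a$, so that any point of $\mathrm{red}^{-1}(a)$ is in the interior of some member. With that reading your proof is a clean and complete argument.
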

\begin{proof} By construction, the intersection of all the open sets $U(\zeta_{V}; A, \eps) $ contains no element in ${\rm red}^{-1}(a)$ for $a\in |\X_{k}|$ a closed point. By the previous lemma, it contains $\zeta_{V}={\rm red}^{-1}(\xi_{V})$ but no element $\zeta_{V'}={\rm red}^{-1}(\xi_{V'})$ for $V'\neq V$.
\end{proof}

\begin{lemm} \lb{disjt}
The  open subsets $\Delta^{\circ}(\xi_{V}, q^{-1/2}) \subset X$  are pairwise disjoint as $V$ varies among irreducible components of $\X_{k}$. 
\end{lemm} 
\begin{proof}
Let $V=\divisor(\phi)$, $V'=\divisor(\phi')$ be distinct irreducible components of $\X_{k}$, and let $e_{V}$, $e_{V'}$ be the corresponding multiplicities in $\X_{k}$ (as defined after \eqref{Dxiy}). If $z\in \Delta^{\circ}(\xi_{V}, q^{-1/2}) \cap\Delta^{\circ}(\xi_{V'}, q^{-1/2}) $, we have
$$q^{-1}= |\vpi(z)| \leq   |\phi(z)|  |\phi'(z)| < q^{-1}, $$
a contradiction.
\end{proof}

\subsection{Reduction}\lb{sec 22}
In this subsection we  prove  a `soft' geometric version of our main theorem, in terms of geometric basins (certain open subsets of $X_{v}^{\an}$) rather than algebraic basins. In the case of minimal level $U_{v}=\GL_{2}(\OO_{F_{v}})$, this version is already equivalent to Theorem \ref{main thm}, as there is only one basin per connected component.   In general, the soft variant is vivid but not explicit; however, it reduces Theorem \ref{main thm} to the explicit comparison of algebraic and geometric basins, stated in Corollary  \ref{compare basins}.

We resume with the notation of the introduction. Thus let   $X_{v}^{\an}$ be the Berkovich  analytification of the Shimura curve $X_{F_{v}}=X_{U, F_{v}}$ as in the introduction, and let $\X$ be its regular model over $\Spf {\OO}_{F_{v}} $ defined in \cite{carayol}. 
We denote by $e$ the ramification degree of $E_{v}/F_{v}$, and for $K$ a local field with residue field $\kappa$ we denote  by $\zeta_{K}(s):= (1-|\kappa|^{-s})^{-1}$ its zeta function.

\begin{prop}
Let $(z_{n})_{n \in \N}$ be a sequence of CM points in $X_{v}^{\an}$ with $v$-conductor going to~$\infty$. The set of limit points of $(z_{n})$ is contained in the set of Shilov points of irreducible component of $\X_{{\bf F}_{v}}$.
\end{prop}

\begin{proof}
According to Proposition \ref{fund sys}, we need to show that for each closed point $a\in| \X_{{\bf F}_{v}}|$
and $\eps>0$, the sequence $z_{n}$ eventually leaves the compact set $\Delta(a, 1-\eps)$. 

Let $\breve{F}_{v}$ be the completion of the maximal unramified extension of $F_{v}$, and denote by  $\breve{X}_{v}^{\an}$ the analytification of $X_{\breve{F}_{v}}$. Its reduction is $\X_{k}$ where $k=\baar{\bf F}_{v}$. 
 
 As the transition maps in the tower $(X_{U})_{U}$ are finite and the images  of  CM points are CM points, it suffices to consider the case of minimal level $U_{v}\cong \GL_{2}(\OO_{F_{v}})$.  By the `Serre--Tate' theorem of \cite[\S 6.6]{carayol}, for each closed point $a\in |\X_{k}|$ the inverse image   ${\rm red}^{-1}(a)$ is canonically identified with the analytic generic fibre  of the  universal deformation space\footnote{More general versions of this object and of the Serre--Tate theorem will be precisely described in \S~\ref{LTs} below.} of the formal group $\cG_{a}$, which by \cite{Dr} is isomorphic to  the analytic open unit disc ${\bf D}$ generic fibre of ${\rm Spf}\breve{\OO}_{F ,v}\llb u\rrb$.
The  CM points in  ${\rm red}^{-1}(a)$ are explicitly described by  Gross's theory of quasicanonical liftings (see \cite{gross}, \cite{argos, argos-split}). 
Namely, if  $z_{n} \in X_{F_{v}}^{\rm CM}$ has   conductor $s=s(n)\geq 1$, then any point of $X_{\breve{F}_{v}}$ over  $z_{n}$ has residue field $ \breve{E}_{v,s}$,  a specific totally ramified abelian  (respectively dihedral if $v$ ramifies in $E$) extension of $\breve F_{v}$ of degree 
\beq\lb{vs}
m_{s(n)}:=e \zeta_{F_{v}}(1)\zeta_{E_{v}}(1)^{-1}q^{s(n)}.\eeq 
The inclusion of the point, spread out to the formal models,  is dual to a map $\breve{\OO}_{F_{v}}\llb u\rrb \to {\OO}_{{\breve E}_{v,s}}$ sending $u$ to a uniformiser, hence  $z_{n}$ lies in the circle $|u|= |\vpi|^{1/m_{s(n)}}$.
As $m_{s(n)}\to 0$ when $n\to \infty$,    any compact disc in ${\rm red}^{-1}(a)$ contains only finitely many CM points of $X_{\breve{F}_{v}}$. Therefore all points in the limit set of the sequence $(z_{n})$ in $X_{v}^{\an}$ reduce to a non-closed point of $\X_{{\bf F}_{v}}$. 
\end{proof}

\begin{rema} From the theory of quasicanonical liftings just discussed, it also follows that if $U_{v} = \GL_{2}(\OO_{F_{v}})$ and $z\in X_{v}^{\an}$ is a CM point of $v$-conductor $s$ with reduction $\baar{z}$, then
 \beq \lb{int qcan} m_{\baar{z}}(Z, \X_{{\bf F}_{v}}) /   [{\bf F}_{v}(\baar{z}) : {\bf F}_{v}]  = m_{s}=  e \zeta_{F_{v}}(1)\zeta_{E_{v}}(1)^{-1}q^{s}.\eeq
\end{rema}

We may now define the geometric counterpart of the basins defined in the introduction.

\begin{defi}\lb{bas geo} Let $V\subset \X_{{\bf F}_{v}}$ be an irreducible component. The \emph{geometric basin} of $V$ is the open subset
$$\mathscr{B}_{V}
 := \Delta^{\circ}(\xi_{V}, q^{-1/2}) \subset X_{v}^{\an}.$$
\end{defi}
By Lemma \ref{disjt}, the geometric basins are disjoint open subsets of $X_{v}^{\an}$. 

The following is the `soft' version of theorem \ref{main thm}. 
\begin{theo}\lb{soft} Let $(z_{n})_{n \in \N}$ be a sequence of CM points in $X_{v}^{\an}$, with $v$-conductor going to $\infty$.   The sequence has a limit if and only if it is eventually supported in a single \emph{geometric} basin $\mathscr{B}_{V}$. If this is the case for the geometric basin of the irreducible component $V\subset \X_{{\bf F}_{v}}$, then 
$$\lim_{m\to \infty} z_{n} = \zeta_{V}.$$
\end{theo}
\begin{proof}
By Proposition \ref{fund sys}, the  neighbourhoods  $\Delta^{\circ}(\xi_{V}, q^{-1}+\eps)$ of $\zeta_{V}$ are contained in the elements of  a fundamental system of neighbourhoods. Thus if the sequence $(z_{n})$ has a limit $\zeta_{V}$, it is eventually supported in $\Delta^{\circ}(\xi_{V}, q^{-1/2})= \mathscr{B}_{V}^{}$. Conversely,  if $z_{n}$ is  eventually supported in $\mathscr{B}_{V}^{}$, by Lemma \ref{disjt} it eventually leaves  $\mathscr{B}_{V'}^{}$
 for all $V'\neq V$, so that $\zeta_{V'}$ cannot be a limit point. Together with the previous result and the sequential compactness of $X_{v}^{\rm an}$, this shows that $z_{n}\to\zeta_{V}$.
\end{proof}

In view of Theorem \ref{soft}, Theorem \ref{main thm} is reduced to Corollary \ref{compare basins} below, a consequence of the following.

\begin{prop} \lb{bounded} 
Let $V$ be an irreducible component of $\X_{{\bf F}_{v}}$. 
 Let $\baar{z} \in \X_{{\bf F}_{v}}$ be a supersingular point and let $V'\neq V$ be  an irreducible component of $\X_{{\bf F}_{v}}$ through $\baar{z}$.
 The intersection multiplicity
$$m_{\baar{z}}(Z, V')$$
is bounded as $z$ varies among CM points of $X_{F_{v}}$ in $\cB_{V}$ reducing to $\baar{z}$.  
\end{prop}
The proof of Proposition \ref{bounded}, given at the very end of the paper (after Corollary \ref{cor bounded}), will be a corollary of the formula of Theorem \ref{thm B}, which computes geometric intersection multiplicities in terms of algebraic data.

\begin{coro} \lb{compare basins} 
Let $V\subset \X_{{\bf F}_{v}}$ be an irreducible component. For all $z\in X_{v}^{\rm CM}$ that is either of  ordinary reduction, or  of sufficiently high  conductor  (depending only on the level $U_{v}$), we have
$$z\in B_{V} \Longleftrightarrow z \in \mathscr{B}_{V}.$$
\end{coro}
\begin{proof}[{Proof of Corollary \ref{compare basins} and of Theorem \ref{main thm}, assuming Proposition  \ref{bounded}.}] The corollary is obvious for points of ordinary reduction.
Let $z\in B_{V}$ have supersingular reduction and conductor $s$. We have
$$e_{V}\cd m(Z, V) / m(Z, \X_{{\bf F}_{v}})  = 1 - \sum_{V'\neq V}  e_{V'} m(Z, V')/m(Z, \X_{{\bf F}_{v}}).$$
 By  \eqref{int qcan} (which remains valid for $\X$ of arbitrary level by the projection formula) and Proposition \ref{bounded}, the right-hand side is close to~$1$ if $s$ is sufficiently large. In particular, by Lemma \ref{seminorm int} and the definition, the point $z$ belongs to $\mathscr{B}_{V}$. Since the $B_{V}$ form a complete partition of the set of CM points of sufficiently high conductor, this also proves the reverse implication.  \end{proof}
 
 The following basic result will be established at the end of \S~\ref{sec 33}, based on a modular interpretation of CM curves.
 \begin{lemm}\lb{is reg} For any $z\in X_{F_{v}}^{\rm CM}$, its Zariski closure of $Z\subset \X$ is a regular curve.
 \end{lemm}
 
 \begin{proof}[{Proof of Corollary \ref{compare basins} and of Theorem \ref{main thm}, assuming  Lemma \ref{is reg} and Proposition  \ref{bounded}.}] The corollary is obvious for points of ordinary reduction.
Let $z\in B_{V}$ have supersingular reduction and conductor $s$. We have
$$e_{V}\cd m(Z, V) / m(Z, \X_{{\bf F}_{v}})  = 1 - \sum_{V'\neq V}  e_{V'} m(Z, V')/m(Z, \X_{{\bf F}_{v}}).$$
 By  \eqref{int qcan} (which remains valid for $\X$ of arbitrary level by the projection formula) and Proposition \ref{bounded}, the right-hand side is close to~$1$ if $s$ is sufficiently large. Therefore, by Lemma \ref{is reg}, Lemma \ref{seminorm int} (applied to the formal completion of $\X$ along the special fibre),  and the definitions, the point $z$ belongs to $\mathscr{B}_{V}$. Since the $B_{V}$ form a complete partition of the set of CM points of sufficiently high conductor, this also proves the reverse implication of  Corollary \ref{compare basins}. As noted above, Theorem \ref{main thm} is then implied by  Theorem \ref{soft}.
  \end{proof}

\section{Special cycles in Lubin--Tate spaces and  in formal groups}  \lb{sec4}
The goal of the rest of the paper is to establish an intersection multiplicity formula that will imply Proposition  \ref{bounded}. The formal completion  of ${\X}$ at a supersingular geometric  point admits a purely local description as a Lubin--Tate space $\cM$. After introducing some notation, in this section we define $\cM$ and its collection of special curves, comparing those with the special curves in $\X$. 
 Then we define  companion cycles in  formal groups; the definitions are a special case of those in \cite{li} for CM curves, and are new for vertical curves.

\subsection{Notation} The notation introduced here will be used throughout the rest of the paper unless otherwise noted. 
\subsubsection{Valued fields}
We fix a prime $v$ of $F$ nonsplit in $E$   and work  in a purely local setting, dropping the subscript $v$ from the notation; thus $F$ and $E$ will denote the local fields previously denoted by $F_{v}$ and $E_{v}$ respectively.   We denote by $\vpi$ a fixed uniformiser in $F$, by $v$ the valuation on $F^{\ts}$ (so $v(\vpi)=1$), and  by  $ |\cd |:=q^{-v(\cd)}$ the standard absolute value on $F$.

If $K$ is a finite extension of $F$, we denote by $\breve K$ the completion of the maximal unramified extension of $K$, by $\breve{\OO}_{K}$ its ring of integers, and by $k$ the residue field of $\breve{K}$ (which is independent of the choice of $K$).  

\subsubsection{Functors on $\OO_{F}$-modules}
We denote by  
$$M_{[n]}:= M/\vpi^{n} M$$
the truncation of $\OO_{F}$-modules, and by
$$M^{*}:= \Hom_{\OO_{F}} (M, F/\OO_{F}), \qquad M^{\vee}:= \Hom_{\OO_{F}} (M, \OO_{F})$$
the Pontryagin and, respectively, plain dualities on $\OO_{F}$-modules.  
We stipulate the following convention on the order of reading  of symbols:
$$\OO_{K, [N]}^{*}:= (\OO_{K, [N]})^{*}, \qquad \OO_{K, [N]}^{\vee}:= (\OO_{K}^{\vee})_{[N]}.$$

We let 
$$\cD_{K}^{-1}:= \OO_{K}^{\vee } = \Hom_{\OO_{F}}(\OO_{K} , \OO_{F})$$
denote the relative inverse different of $K/F$, an invertible $\OO_{K}$-module.

\subsubsection{Integers} 
 For compact open subgroups $C_{K}\subset K^{\ts}$,
we define integers
\beq \lb{dck} d_{C_{K}}= [\OO_{K}^{\ts}:C_{K}].\eeq
 If $E/F$ is a quadratic extension, we also define 
  $ d_{0}=d_{\Ig, 0}=d_{E, 0}=1$  and, for $n\geq 1$,
    $$d_{n} := q^{2n, \quad }d_{E, n}= \zeta_{E}(1)^{-1}q^{2n},  \quad d_{\Ig,n}:= \zeta_{F}(1)^{-1}q^{2n}.$$

\subsubsection{Miscellaneaous}
We denote by  $\cC_{K}$ the category of complete Noetherian local $\breve{\OO}_{K}$-algebras with residue field $k$, and by $\cC_{k}$ its subcategory consisting of objects that are $k$-algebras.

Finally, we will  freely use the language and notation of the Appendix, which the reader is now invited to skim through.

\subsection{Lubin--Tate spaces}\lb{LTs}  For basic references, see \cite{Dr}, or \cite[Appendice]{carayol}.

Let $K$ be a finite extension of $F$.  Let $\cG$ be a formal ${\OO}_{K}$-module of dimension 1 and height $h$ over an object $A$ of $\cC_{F}$.\footnote{Note as in \cite[\S~2.2]{li} that the $\OO_{K}$-action on ${\rm Lie} \, \cG\cong A$ gives $A$ an $\OO_{K}$-algebra structure, so that $A$ is also an object of $\cC_{K}$.}
A \emph{Drinfeld  structure} on $\cG$ of level $n$
 is an $\OO_{K}$-module map 
$$\alpha\colon \OO_{K, [n]}^{h, *}\to \Hom_{\Spf A}(\Spf A, \cG)$$ 
such that
 $$\alpha_{*}  (\OO_{K, [n]}^{h, *}) :=\sum_{x\in \OO_{K,[n]}^{h, *} } [\alpha(x)] =\cG[\vpi^{n}]$$ 
 as Cartier divisors.   A  Drinfeld structure of infinite level  on $\cG$ is a map $\OO_{K}^{h,*}\to \cG(A)=\Hom_{\Spf A}(\Spf A, \cG)$ whose restriction to $\OO_{K, [n]}^{h, *}$ is a Drinfeld structure of level $n$ for all $n$.

Let $\cG_{h,K}$ be a formal $\OO_{K}$-module of height $h$ over $k$, which is unique up to isomorphism. Consider the  functor $\cM_{h, K, n}$ on $\cC_{F} $ that associates to an object $A$ the set of equivalence classes of triples 
\beq\lb{triples}
[\cG, \iota, \alpha],\eeq
 where $\cG$ is a formal $\OO_{K}$-module over $A$ of height $h$, 
$\iota\colon \cG_{h,K} \to \cG \ts_{\Spf A}\Spf k$ is a quasi-isogeny of height~ $0$, and
$\alpha\colon \OO_{K, [n]}^{h, *} \to \cG(A)$ is a Drinfeld structure of level $n$. A theorem of Drinfeld  \cite{Dr}
asserts that $\cM_{h, K, n}$ is representable by a regular formal scheme finite flat over $\cM_{h, K, 0}$,
and  that $ \cM_{h, K,0}\cong_{/\breve{\OO}_{F}}\Spf \breve{\OO}_{K}\llb u_{1}, \ldots, u_{h-1}\rrb$.  

If $U\subset\GL_{h}(\OO_{K})$ is the open compact subgroup 
$$U=U^{(h, K)}_{n}:=\Ker (\GL_{h}(\OO_{K})\to \GL_{h}(\OO_{K}/\vpi^{n})),\footnote{We will simply write $U_{n}$ for $U_{n}^{(h, K)}$ when $h$, $K$ are clear from context.}$$
 we define $\cM_{h, K,U}:= \cM_{h, K,n} $. If  $U\subset\GL_{h}(\OO_{K})$ is any compact subgroup containing $U_{n}$, we define $\cM_{h, K,U}:= \cM_{h, K, n}/(U/U_{n})$. 

The formal scheme (representing the functors) $\cM_{h, K, U}$  are called \emph{Lubin--Tate spaces}. 
In what follows, we will denote 
\beq \lb{GEF}
\cG_{E}:= \cG_{1, K} , \qquad \cG_{F}:=\cG_{2,F}\eeq
and identify $\cG_{F}$ with the  image of $\cG_{E}$ under the forgetful functor. The endomorphism ring of $\cG_{E}$ is $\OO_{E}$; the endomorphism ring of $\cG_{F}$ is the ring of integers (or elements with integral norms) of the unique division algebra $D$ of rank 4 over $F$. 

We will also denote 
 $$\cM_{U}:= \cM_{2, F, U},\ \cM_{n}:= \cM_{{U}^{(2, F)}_{n}}, \qquad \qquad \cN_{C}:= \cM_{1, E, C},\  \cN_{n}:= \cN_{U^{(1 , E)}_{n}}.$$
The space $\cN_{C}$ is isomorphic to $\Spf \breve{\OO}_{E,C}$, the ring of integers in the abelian extension of $\breve{E}$ with Galois group $\OO_{E}^{\ts}/C$.  

Finally, we denote by 
$$\cM_{\infty}:= \varprojlim_{n}\cM_{n}, \qquad \cN_{\infty}:= \varprojlim_{n}\cN_{n},$$
 as pro-objects of a category $\cF$ of Noetherian formal schemes (cf. the Appendix), and as the functors on $\cC_{F}$ of deformations of $\cG_{F}$ (respectively $\cG_{E}$) together with a Drinfeld structure of infinite level.

We fix for the rest of the paper a level $U\subset \GL_{2}(\OO_{F})$ and write $\cM:=\cM_{U}$. We denote by $M$ the Berkovich generic fibre of $\cM$. Let $\cG\to \X_{U}$ be Carayol's sheaf of divisible $\OO_{F_{v}}$-modules recalled in \S~\ref{sec 11}, and for a $z\in \X_{U}$, denote by $\cG_{z}$ the fibre of $\cG$ at $z$.
 
\paragraph{A Serre--Tate theorem}
We restore the subscript $v$ just for the following proposition.
\begin{prop}[Carayol  {\cite[\S7.4]{carayol}}] \lb{serretate} Let $\baar{z}\in \X_{U,k} $ be a closed point.
\begin{enumerate}
\item
If $\baar{z} $ is supersingular, then, after identifying   $\cG_{F}=\cG_{\baar{z}}$, the formal completion of $\X_{\breve{\OO}_{F_{v}}} $ at  ${\baar{z}}$ 
 is canonically  $\breve{\OO}_{F_{v}}$-isomorphic to the Lubin--Tate space $\cM_{U_{v}}$, via the map sending an $A$-valued point $z$ to $\cG_{z}$ with its level structure $\alpha_{z}$. 
\item 
 If $\baar{z} $ is ordinary,  the formal completion of $\X_{\breve{\OO}_{F_{v}}} $ at  $  {\baar{z}}$ 
  is  canonically $\breve{\OO}_{F_{v}}$-isomorphic to the deformation space of the $\OO_{F_{v}}$-divisible module $\cG_{F_{v}}^{(1)}\oplus F_{v}/\OO_{F_{v}}$ over $k$, and non-canonically to $\Spf \breve{\OO}_{F_{v}}\llb u\rrb$; in particular, it is smooth over $\breve{\OO}_{F_{v}}$
\end{enumerate}
\end{prop}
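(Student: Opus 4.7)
The plan is to derive both statements from the general Serre--Tate principle, applied to the moduli interpretation of $\X_U$ given in \cite{carayol}, combined with the decomposition of the relevant $p$-divisible group under the action of the quaternion order.

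First, I would recall Carayol's description: away from the ramification of $B$ and of the level, $\X_U$ parametrizes abelian schemes of dimension $[F:\Q]$ equipped with an $\OO_B$-action satisfying the Kottwitz determinant condition, a polarization of the right type, and level structures. At the geometric closed point $\baar{z}$ one obtains an abelian variety $A_{\baar{z}}/k$ with these structures, and its $p$-divisible group $A_{\baar{z}}[p^{\infty}]$ decomposes, via the $\OO_F\ot\Z_{p}$-action, as $\prod_{v'\mid p}A_{\baar{z}}[v'^{\infty}]$. At the prime $v\notin\Sg$ we have $B_{v}\cong M_{2}(F_{v})$, and Morita equivalence applied to the $\OO_{B_{v}}$-action on $A_{\baar{z}}[v^{\infty}]$ extracts a formal $\OO_{F_{v}}$-divisible module $\cG_{\baar{z}}$ of height $2$, namely the one the notation suggests. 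The level at $v$ on $A$ translates, again by Morita equivalence, to a Drinfeld level structure of the shape appearing in \S\ref{LTs}.

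Next I would invoke the Serre--Tate theorem: the functor of deformations of $A_{\baar{z}}$ (with its polarization and $\OO_{B}$-action) from $\cC_{F_v}$ to sets is equivalent to the functor of deformations of $A_{\baar{z}}[p^{\infty}]$ with the corresponding structures. For $v'\mid p$ with $v'\neq v$, either $B_{v'}$ is a division algebra or the level at $v'$ rigidifies deformations of $A_{\baar{z}}[v'^{\infty}]$, so these components admit a unique deformation. The polarization, being prime-to-$v$ and compatible with the decomposition, imposes no further constraint on the $v$-component. Therefore the whole deformation functor, together with the level at $v$, becomes equivalent to the deformation functor of $\cG_{\baar{z}}$ together with a Drinfeld level structure of level $U_{v}$.

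Finally, I would identify this last functor with the prescribed Lubin--Tate or ordinary deformation space. In the supersingular case, $\cG_{\baar{z}}$ is the unique formal $\OO_{F_{v}}$-module of height $2$ over $k$, hence canonically identified with $\cG_{F_{v}}$, and its deformation functor with Drinfeld level structure is by definition represented by $\cM_{U_{v}}$; the canonical isomorphism with the formal completion of $\X_{\breve{\OO}_{F_{v}}}$ at $\baar{z}$ is then the map induced by the universal property of $\cM_{U_{v}}$, and is an isomorphism because the Serre--Tate equivalence is. In the ordinary case, the $v$-divisible module $\cG_{\baar{z}}$ has a connected part of height $1$ (isomorphic to $\cG_{F_{v}}^{(1)}$) and an \'etale part (isomorphic to $F_{v}/\OO_{F_{v}}$); over the algebraically closed $k$ this extension splits canonically, yielding the asserted description of the special fibre. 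Its deformation functor is the fibre product of the deformation functor of $\cG_{F_{v}}^{(1)}$ (trivial, since height-$1$ Lubin--Tate space is $\Spf\breve{\OO}_{F_{v}}$) with the universal extension of the \'etale part by the formal part; the latter is parametrized by a single coordinate $u$, giving $\Spf\breve{\OO}_{F_{v}}\llb u\rrb$ and in particular smoothness. I expect the main obstacle to be the careful bookkeeping required to show that all structures away from $v$ are rigid and that Morita equivalence correctly transports level structures, so that the local deformation problem truly reduces to that of the formal $\OO_{F_{v}}$-module $\cG$ alone; once this is verified, the statement is essentially the definition of the Lubin--Tate space.
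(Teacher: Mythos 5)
The paper offers no proof of this proposition: it is recalled from Carayol \cite{carayol} with a pointer to \S 7.4 there, so there is nothing internal to compare your argument against. Your sketch reconstructs the right overall mechanism (Serre--Tate plus Morita equivalence to strip the deformation problem down to that of the formal $\OO_{F_v}$-module $\cG$, then identification with the Lubin--Tate or ordinary deformation space), and the endgame in both the supersingular and ordinary cases is handled correctly. However, the opening step is not quite right: when $F\neq\Q$ the Shimura curve $X_U$ is \emph{not} of PEL type, so it does not directly parametrize abelian schemes with $\OO_B$-action (and in any case such abelian schemes would have dimension at least $2[F:\Q]$, not $[F:\Q]$ as you write). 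Carayol's actual argument first constructs an auxiliary PEL Shimura variety for a unitary similitude group attached to $B$ and a CM extension, endows it with a universal abelian scheme, extracts the height-$2$ formal $\OO_{F_v}$-module from the $v$-component of its $p$-divisible group via Morita equivalence, and then transports all of this back to $X_U$ via the relation between the two Shimura data; only after that detour is one in a position to invoke Serre--Tate and conclude. Your sketch compresses this crucial passage into the single clause ``recall Carayol's description,'' which hides precisely the nontrivial content. Once that gap is filled, the rest of your argument matches the standard line of reasoning.
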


\subsection{Special curves in  Lubin--Tate spaces}\lb{sec 33} We consider two special classes of  curves in $\cM$, the irreducible components of the special fibre, that we call \emph{vertical curves},  and the ``Zariski closures'' of CM points, which we call \emph{CM curves}.  Then we compare them with the corresponding objects in the Shimura curve $\X$.

\subsubsection{Vertical curves}  Let $A $ be a complete Noetherian $k$-algebra and let $\cG$ be a formal $\OO_{F}$-module of height 2 over $A$. An \emph{Igusa structure} of level $n$ on $\cG$ is an $\OO_{F}$-module map 
$$\gamma\colon \OO_{F,[n]}^{*}\to \Hom_{\Spf A }(\Spf A, \cG)$$
such that $q^{n}\cd \gamma_{*} ( \OO_{F,[n]}^{*})= \cG[\vpi^{n}]$ as Cartier divisors.\footnote{Our $\Ig_{n}$ should be compared with the `exotic Igusa curve' ${\rm ExIg}(p^{n},n) $ of \cite[(12.10.5.1)]{KM}; cf. \emph{ibid.} Proposition 13.7.5.}
 The functor  on $\cC_{k}$ sending an object $A$ to the set of isomorphism classes of triples $[\cG, \iota, \gamma]$, where $[\cG, \iota]$ is a deformation of $\cG_{F}$ as in \eqref{triples},  and $\gamma$ is an Igusa structure of level $n$, is representable by a formally smooth curve $$\Ig_{n}\to \Spec k$$
called the $n^{\rm th}$ Igusa curve. It is of degree $d_{\rm Ig,n} $  over $\Ig_{0}= \cM_{U_{0}, k}\cong \Spf k\llb u\rrb$. We define  $\Ig_{\infty}:=\varprojlim_{n}\Ig_{n}$ and 
$$ [{\rm Ig }_{\infty}]^{\circ}:=\varprojlim  \,  [\Ig_{n}]^{\circ}\in K_{0}(\Ig_{\infty})_{\Q}, \qquad  [\Ig_{n}]^{\circ}:=  d_{\Ig, n}^{-1}\cd  [\Ig_{n}].$$

 Let  
$$\lm \in {\bf P}^{1}(F) \subset \OO_{F}^{\ts}\bks \Hom_{\OO_{F}}( \OO_{F},  \OO_{F}^{2}).$$
 We attach to $\lm$ the morphism 
\beqq
f_{\lm}\colon \Ig_{\infty} &\to \cM_{U,k}\into \cM_{U}\\
[\cG, \iota, \gamma] & \mapsto [\cG, \iota, \gamma\circ\lm^{*}]
\eeqq
(where $ \cM_{U,k}\into \cM_{U}$ is the natural map base-change of $k\into \Spf \breve{\OO}_{F}$)
 and the irreducible and reduced curve
\beq
f_{\lm}(\Ig_{\infty})=V({\lm})_{U} \subset \cM_{U, k}
\eeq
defined by 
the closed condition that any extension to $\OO_{F}^{*,2}$ of  the level structure $\alpha$ of a geometric point  $[\cG, \iota, \alpha]$ belonging to $V(\lm)_{U}$ should   factor through $\lm^{*} g\colon \OO_{F}^{*,2} \to \OO_{F}^{*}$ for some $g\in U$.
 The  ${V}({\lm})_{U}$, for $\lm \in {\bf P}^{1}(F)/U$,  are   the  irreducible components of $\cM_{U,k}$ and they all meet at the closed point \cite[Appendice \S~8, and \S~9.4]{carayol}.

\begin{defi} The \emph{vertical curve} attached to $\lm $  in $\cM_{\infty}$, respectively $\cM_{U}$, is the curve
$$V(\lm):=f_{\lm }(\Ig_{\infty})\subset \cM_{\infty}, \qquad V(\lm)_{U}:=f_{\lm , U}(\Ig_{\infty})\subset \cM_{U}.$$

The corresponding normalised \emph{vertical cycle}  in $K_{0}(\cM_{\infty})_{\Q}$, respectively $K_{0}(\cM_{U})_{\Q}  $), is 
$$[V(\lm)]^{\circ}:=f_{\lm , *}[\Ig_{\infty}]^{\circ} = \varprojlim_{U}  [V(\lm)_{U}]^{\circ} , \qquad [V(\lm)_{U}]^{\circ} = d_{U\cap \OO_{F}^{\ts}}^{-1}
\cd  [V(\lm)_{U}].
$$
\end{defi}
\subsubsection{CM curves} We consider a local  analogue of CM points and of their Zariski closures in integral models. These will be images of maps of Lubin--Tate towers and we start by defining the data parametrising them. The definition is slightly streamlined version of the one in  \cite[\S~2.5]{li}.

Consider an element
\beqq\lb{qisog}
(\vphi, \tau)\in {\rm LT}(E, F):= {\rm QIsog}(\cG_{E}, \cG_{F}) \ts_{E^{\ts}} {\rm Isom}_{F}(E, F^{2}) \eeqq
where the notation means that  $\vphi$ is a quasi-isogeny  of formal  $\OO_{F}$-modules,  $\tau$ is an $F$-linear isomorphism, and we consider the quotient of the product by the relation $(\vphi\circ t, \tau \circ t)\sim (\vphi, \tau)$, $t\in E^{\ts}$.  
  There exists a shortest  integer interval  $[r, s]$  such that $\vpi^{-r} \OO_{F}^{2}\subset \tau(\OO_{E})\subset \vpi^{-s_{}}\OO_{F}^{2}$; this depends on the choice of representative $\tau$. 

We define two   functions, \emph{conductor} and  \emph{height}, on ${\rm LT}(E, F)$ by 
\beq\lb{ht tau}
c(\vphi, \tau) &:= s-r,\\
 {\rm ht} (\vphi, \tau) &:=    -{\rm ht}(\vphi) + \log_{q}[\tau(\OO_{E}): \OO_{F}^{2} ],
\eeq
where ${\rm ht}(\vphi)$ is the usual $\OO_{F}$-height of a quasi-isogeny of $\OO_{F}$-modules, and $[ \, : \, ]$ denotes generalised index (that is, for two lattices $\Lm$, $\Lm'\subset F^{2}$, we have $[\Lm: \Lm']:={|\Lm/\Lm\cap \Lm'|\over |\Lm'/\Lm\cap \Lm'|}$).  We denote by ${\rm LT}(E, F)^{ 0}$ the subspace of pairs of height~$0$.

 \emph{From now on, the  representatives $(\vphi, \tau)$ of elements of ${\rm LT}(E, F)^{0}$  will always be chosen to satisfy  $r_{}=0$.}\footnote{The reader unwilling to make this assumption will simply need to interpret some maps of formal $\OO_{F}$-modules  in the diagrams to follow as quasi-isogenies.} In this case $c(\vphi, \tau) = s= {\rm ht}(\vphi)$.

To an element   $(\vphi, \tau)\in {\rm LT}(E, F)^{0}$ we attach a morphism $f_{(\vphi, \tau)}$ of Lubin--Tate towers, given by the following morphism of functors on $\cC_{F}$
(which are representable when restricted to finite levels):
\beq \lb{def f}
f_{(\vphi, \tau)}\colon \cN_{\infty} &\to  \cM_{\infty}\\
[\cG, \iota, \beta]&\mapsto [\cG/\beta_{*}K, \baar{\vphi}_{\beta}\circ\iota\circ 
 \vphi^{-1}, \beta'\circ \tau^{*}],
\eeq
where the notation is according to the following commutative diagrams:

\begin{equation}
\lb{CM diag}
\xymatrix{
& K:= (\OO_{E}/\tau^{-1}\OO_{F}^{2})^{*}  \ar[d] \ar[r]	 &\beta_{*}K \ar[d]	 &		\\	
	& \OO_{E}^{*}\ar[d] \ar^{\beta}[r] &   \cG \ar^{\vphi_{\beta}}[d]& \baar{\cG} \ar^{\baar{\vphi}_{\beta}}[d] &\ar[l]_{\iota} \cG_{E}  \ar^{\vphi}[r]& \cG_{F}\ar@{..>}[lld]\\
\OO_{F}^{2, *} \ar^{\tau^{*}}[r]   & \tau^{*}(\OO_{F}^{2, *}) \ar^{\beta'}[r] & \cG/\beta_{*}K&  \baar{\cG}/\baar{\beta}_{*}K.
}
\end{equation}
In the above diagrams, the columns are exact, the bars denote reduction modulo the maximal ideal of $\breve{\OO}_{E}$, and $\beta_{*}K $ is the  subgroup scheme  of $ \cG $ defined by the product of ideal sheaves of the $\beta_{*}\kappa$, $\kappa\in K$.

Let $\psi\colon E\into M_{2}(F)$ be the $F$-algebra morphism such that $\psi(t)\tau(x)= \tau(tx)$ for all $t, x\in E$. The composition to level $U$ factors as  
\beq 
\lb{def fU}
f_{(\vphi, \tau), U}:={\rm p}_{U}\circ f_{(\vphi, \tau)} \colon  \cN_{\infty}\to \cN_{C}\to \cM_{U} \eeq
 where $C:=\psi^{-1}(U)$ and the second map is a finite monomorphism, hence a closed immersion. In particular, the morphism of functors defined in infinite level does define  a morphism in the pro-category of Noetherian formal schemes $\widehat{\mathscr{F}}$ defined in the Appendix.

   If $U=U_{0}$ and $(\vphi, \tau)$ has conductor $s$,  then $C=(\OO_{F}+\vpi^{s}\OO_{E})^{\ts}$, and the image of $\cN_{C}$ is a quasicanonical lift of level~$s$. 

\begin{defi}
The \emph{CM curve}  in $\cM_{\infty}$ (respectively $\cM_{U}$) attached to $(\vphi, \tau)$ is 
$$Z(\vphi, \tau):= f_{(\vphi, \tau)}(\cN_{\infty})\subset \cM_{\infty}, \qquad Z(\vphi, \tau)_{U}:= f_{(\vphi, \tau), U}(\cN_{\infty})\subset \cM_{U}. $$
where $ f_{(\vphi, \tau)}$ (respectively $ f_{(\vphi, \tau), U}$) is the morphism defined by \eqref{def f} (respectively \eqref{def fU}).

The local CM point $z(\vphi, \tau)$ in the Berkovich generic fibre  $M$ of $\cM$ is the generic fibre of $Z(\vphi, \tau)$; equivalently, it is the image under $f_{(\vphi, \tau)} $ of the single point in the generic fibre of  $\cN_{n}$ for any sufficiently large $n$; thus the notation is consistent with Notation \ref{not Z}.

The normalised  \emph{CM cycle}   attached to $(\vphi, \tau)$ in $K_{0}(\cM_{\infty})_{\Q}$  (respectively $K_{0}(\cM_{U})_{\Q}  $) is 
$$[Z(\vphi, \tau)]^{\circ}:= f_{(\vphi, \tau), *} [\cN_{\infty}]^{\circ}= \varprojlim_{U}  \,  [Z(\vphi, \tau)_{U}]^{\circ}, \qquad  
[Z(\vphi, \tau)_{U}]^{\circ} = {1\over d_{\psi^{-1}(U)}}\cd  [Z(\vphi, \tau)_{U}].$$
\end{defi}

\begin{defi} \lb{basin M}
Let $V=V(\lm)_{U}\subset \cM_{k}$ be the irreducible component parametrised by the $U$-class of $\lm$, and let $n$ be minimal such that $U\supset U_{n}$. 
The \emph{basin} $B_{V(\lm)_{U}}$ of $V$ is the set of those CM points   $z=z(\vphi, \tau)\in M$ of conductor $s\geq n$, such that if $(\vphi,\tau)$ is chosen to satisfy $\OO_{F}^{2}\subset \tau(\OO_{E})\subset \vpi^{-s}\OO_{F}^{2}$, the lines
\beq\lb{Ldef}
L_{s}(\tau) := 
\tau(\OO_{E})/\OO_{F}^{2}, \qquad
 L_{s}(\lm):= \lm(\vpi^{-s}\OO_{F}/\OO_{F}) 
\qquad \subset \quad  (\vpi^{-s}\OO_{F}/ \OO_{F})^{2}
\eeq
are in  the same orbit for the left action of $U$. 
\end{defi}

\subsubsection{Comparison of  local and  global objects}
We temporarily restore the subscripts $v$ for local objects. We use the notation of the introduction for CM points on our fixed Shimura curve $X=X_{U}$, and we abbreviate $\cM=\cM_{U_{v}}$, as well as $M=M_{U_{v}}$ for the generic fibre of $\cM$. 
\begin{lemm}\lb{compare basins 2} Let   $z\in X^{\rm CM}_{F_{v}}$ be a point with complex multiplication by $(E, \psi)$, with $E_{v}/F_{v}$ nonsplit. Let $Z$ be the Zariski closure of $z$ in $\X$.  Let $\tau\colon E_{v}\to F_{v}^{2}$ be such that    $\psi_{v}(t)\tau(x)= \tau(tx)$ for all $t, x\in E_{v}$.
Let $\baar{z}\in \X_{{\bf F}_{v}}$ be the reduction of $z$ and let $C\subset  \X_{{\bf F}_{v}}$ be the connected component containing $\baar{z}$.   Let   $\lm \in {\bf P}^{1}(F_{v})/U_{v}$, let 
$$V=V(\lm)_{U}^{(C)}\subset C\subset \X_{{\bf F}_{v}} $$ be the corresponding irreducible component.

Let $\baar{z}' \in \X_{k}$ be any point over $\baar{z}$. Let $z'\in M$  be any point mapping to $z\in X_{v}^{\rm an}$ under the identification of $M $ with  ${\rm red}^{-1}(\baar{z})\subset X^{\rm an}_{\breve{F}_{v}}$ given by Proposition \ref{serretate}. 
 Let $V'\subset \cM_{k}$ be any irreducible component  mapping to $V\subset \X_{{\bf F}_{v}}$. 

\begin{enumerate}
\item $V'= V(\lm)_{U_{v}}\subset\cM_{U_{v}, k}$.
\item There exist $(\vphi, \tau)\in {\rm LT}(E, F)^{0}$, whose second component is equal (up to rescaling by $E_{v}^{\ts}$) to the isomorphism $\tau\colon E_{v}\to F_{v}^{2}$ attached to $z$, such that $z'=z(\vphi, \tau)$. 
\item The CM point $z$ belongs to $\cB_{V}$ if and only if $z'$ belongs to $\cB_{V'}$.
\item The images of $Z':= Z(\vphi, \tau)$ and of $Z$ in the formal completion $\hat{\X}_{\baar z}$ of  $\X$ at $\baar z$ (via the map of Proposition \ref{serretate} and the completion, respectively) are equal.
\item We have the following relation between intersection multiplicities in $\X$ and $\cM_{U_{v}}$:
$$m_{\baar{z}}(Z, V)= [{\bf F}_{v}(\baar{z}): {\bf F}_{v}]\cd  m(Z', V').$$
\end{enumerate}
\end{lemm}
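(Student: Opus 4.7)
The plan is to reduce to the local Lubin--Tate setting via Carayol's Serre--Tate theorem (Proposition \ref{serretate}), which identifies the formal completion of $\X_{\breve{\OO}_{F_{v}}}$ at $\baar{z}'$ with $\cM = \cM_{U_{v}}$; the four claims then follow from functorial identifications of moduli data together with a base change calculation.

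For part 1, the CM structure $\psi$ on $z$, when transported to the formal completion at $\baar{z}'$, endows $\cG_{\baar{z}'} = \cG_{F}$ with an $\OO_{E}$-action compatible with its $\OO_{F}$-structure, up to isogeny. Such an action corresponds to a quasi-isogeny of formal $\OO_{F}$-modules $\vphi\colon \cG_{E}\to \cG_{F}$, unique up to precomposition by $E^{\ts}$, which we normalize to height $0$. The isomorphism $\tau$ is then determined by the Drinfeld level structure $\alpha_{z}$ on $\cG_{z}$ in the manner described in the introduction, and coincides with the $\tau$ in the statement up to the $E^{\ts}$-rescaling ambiguity. With this pair $(\vphi,\tau)\in{\rm LT}(E,F)^{\circ}$ one has $z' = z(\vphi,\tau)$ by definition \eqref{def f}.

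For part 2, both $V(\lm)_{U}^{(C)}\subset \X_{{\bf F}_{v}}$ and $V(\lm)_{U_{v}}\subset \cM_{U_{v}, k}$ are cut out by the same moduli-theoretic condition --- that the universal Drinfeld level structure $\alpha$ factor through $\lm g$ for some $g$ in the respective level subgroup --- and Carayol's identification preserves $\alpha$, so formal completion at $\baar{z}'$ matches them. Part 3 is then a formal translation: the basin condition is phrased identically on both sides in terms of $(\tau, \lm)$, which are matched by parts 1 and 2 respectively.

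For part 4, observe that the local extension $\OO_{\X,\baar{z}} \to \OO_{\cM,\baar{z}'}$ obtained by base change to $\breve{\OO}_{F_{v}}$ followed by completion at $\baar{z}'$ is flat and unramified, hence preserves lengths of finite-length modules. Interpreting $m_{\baar{z}}(Z,V)$ as the ${\bf F}_{v}$-length of the intersection ring $\OO_{Z \cap V, \baar{z}}$, so that $m_{\baar{z}}(Z,V) = [{\bf F}_{v}(\baar{z}):{\bf F}_{v}]\cd \mathrm{length}_{\OO_{\X,\baar{z}}} \OO_{Z \cap V, \baar{z}}$, it remains to identify the base change of $\OO_{Z \cap V, \baar{z}}$ to $\OO_{\cM,\baar{z}'}$ with $\OO_{Z' \cap V', \baar{z}'}$. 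This holds because $Z \ot_{\OO_{F_{v}}} \breve{\OO}_{F_{v}}$ decomposes into components corresponding to the Galois conjugates of $z$ in $X_{\breve{F}_{v}}$, exactly one of which passes through $\baar{z}'$ and coincides with $Z'$, while $V \ot \breve{\OO}_{F_{v}}$ restricts to $V'$ near $\baar{z}'$ by part 2. The main obstacle lies in part 1: producing $\vphi$ with the correct height-$0$ normalization and verifying that the naturally-defined $\tau$ matches the one in the statement; the remaining three parts are essentially formal consequences.
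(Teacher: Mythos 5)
Your proposal follows the same route as the paper: parts 1--3 are formal consequences of Carayol's Serre--Tate theorem and the matching of the moduli-theoretic definitions (which the paper dismisses as ``clear from the definitions''), and part 4 is reduced to a Galois-conjugation count after base change to $\breve{\OO}_{F_{v}}$. Your explanation of part 4 is in fact somewhat more explicit than the paper's one-sentence argument, which simply observes that the right-hand side is the sum of the intersection multiplicities of the $\Gal(\mathbf{F}_{v}(\baar{z})/\mathbf{F}_{v})$-conjugates of $(Z',V')$ and that these are all equal; but both versions rest on the same unstated facts that exactly one conjugate of $z'$ reduces to $\baar{z}'$ and that the germ of $V\otimes\breve{\OO}_{F_{v}}$ at $\baar{z}'$ is $V'$, so the approaches are essentially identical.
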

\begin{proof} The first thee statements are clear from the definitions. For the fourth statement, let $\Spf A\subset \hat{\X}_{\baar{z}}$ be a neighbourhood of $\baar{z}$ with $A$ a topologically finitely presented flat $\OO_{F_{v}}$-algebra.  Then the images of $Z$ and $Z'$ are both induced by the unique map $A\to \OO_{F_{v}(z)}$ lifting the map $A_{F_{v}}\to F_{v}(z)$  corresponding to~$z$.

 Finally for the last statement, observe that the right hand side is the sum of the intersection multiplicities of all the  $\Gal({\bf F}_{v}(\baar{z})/{\bf F}_{v})$-conjugates of the pair $(Z', V')$ (as a pair of cycles in the completion of $\X_{\breve{\OO}_{F_{v}}}$ at $\baar{z}'$), and all those multiplicities are equal. 
\end{proof}

\begin{proof}[Proof of Lemma \ref{is reg}] By Lemma \ref{compare basins 2}, it suffices to prove the regularity of the formal scheme $Z'=Z(\vphi, \tau)_{U_{v}}$; using the notation of \eqref{def fU}, this follows from the fact that $Z'$ is the image of the regular formal scheme $\cN_{C}$  under a closed immersion.
\end{proof}

\subsection{Special cycles in formal groups and  their Tate modules}  We define special  cycles, analogous to those defined above in $\cM_{\infty}$,  inside  $\cG_{F}^{2}$, its truncations or  Tate modules.  The definitions are variants of those of \cite[\S~3.1, 3.3]{li}.
Later in \S\ref{41}, we will reduce the intersection problem in Lubin--Tate spaces  to a similar problem in $\cG_{F}^{2}$.

Let $K=F$ or  $E$ and let $\cG_{K}= \eqref{GEF}$.
  For
    $?\in \{ \emptyset, [N]\}$ and $\prime\in\{* , \vee\}$, let 
\beq
{}'\cG_{K, ?}:=\Hom_{\OO_{K}} (\OO_{K, ?}^{\prime}, \cG_{K})
\eeq
Then we have  identifications and maps
\beq\lb{jN}
\xymatrix{
{}^{*}\cG_{K}\cong\cD_{K}^{-1}\ot_{\OO_{K}}T\cG_{K}^{} 
\ar@{->>}[d]^{\pi_{N}} & \\
 {}^{*}\cG_{K, [N]}\cong \cD_{K}^{-1}\ot_{\OO_{K}}\cG_{K}[\vpi^{N}] =
 {}^{\vee}\cG_{K, [N]}
 \ar@{^{(}->}[r]^{\qquad \qquad j_{N}}& 
 {}^{\vee}\cG_{K}= \cD_{K}^{-1}\ot_{\OO_{K}}\cG_{K}^{},
}
\eeq
  where   the \emph{Tate module}  $T\cG_{K}$ is the $\OO_{K}$-module scheme  
  $$T\cG_{K} :=\varprojlim_{n}\cG_{K}[\vpi^{n}]$$
  with transition maps given by multiplication by $\vpi$. 
   (The isomorphisms denoted by $\cong $ depend on the choice of $\vpi$, which allows to compatibly identify $\OO_{F, [N]}  \cong \OO_{F, [N]}^{*}$. This dependence will be negligible for our purposes .)


\paragraph{Vertical cycles} Let $\lm\in {\bf P}^{1}(F)$.
We attach to $\lm$ the morphisms, all denoted by the same name, 
\beqq f^{\flat}_{\lm}\colon {}^{\prime} \cG_{F, ?} &\to {}^{\prime}\cG_{F, ?}^{2}\\
x &\mapsto x\circ \lm',
\eeqq
where $? \in \{\emptyset, [N]\}$ and $\prime\in \{*,\vee\}$. 

\begin{defi} Let $? \in \{\emptyset,[N]\}$, let  $\prime\in \{*,\vee\}$, and let $\bullet =\emptyset $ if $\prime=\vee$, $\bullet = \circ $ if $\prime=*$. 
The  \emph{vertical cycle} attached to $\lm $ is 
\beqq\
[V^{\flat}(\lm)]^{\bullet}_{?}:= f^{\flat}_{\lm, *} [{}^{\prime}\cG_{F, ?}]^{\bullet }\in  K_{0} ( {}^{\prime}\cG_{F, ?}^{2})_{\Q},
\eeqq
where in the case of $\prime=*$, the normalisation is made viewing $\cG_{F, [N]}$ as a scheme (of degree $d_{N}$) over the base $k$.

  (The symbol $\prime$ is omitted from the name of the cycle, as it is determined by the superscript~$\bullet$.)
\end{defi}

\paragraph{CM cycles}  Let $(\vphi, \tau)\in {\rm LT}(E, F)^{0}$. 
For $n\geq c(\vphi, \tau)$, 
 consider the morphism
\beq 
\vpi^{n} f^{\flat}_{(\vphi, \tau)} \colon {}^{\prime}\cG_{E, ?}  &\to {}^{\prime}\cG_{F,?}^{2} \\
y &\mapsto \vphi\circ y \circ \vpi^{n}\tau'.
\eeq
(Its name is meant to suggest the idea that   $ \vpi^{n} f^{\flat}_{(\vphi, \tau)} $ may be thought of as the composition of $[\vpi^{n}]$ and an, in general non-existent, morphism  $f^{\flat}_{(\vphi, \tau)} $.)

\begin{defi}  
Let $? \in \{\emptyset, [N]\}$, let  $\prime\in \{*,\vee\}$, and let $\bullet =\emptyset $ if $\prime=\vee$, $\bullet = \circ $ if $\prime=*$. 
The \emph{CM cycle} attached to $(\vphi, \tau)$ is
$$[Z^{\flat}(\vphi, \tau)]^{\bullet}_{?}:= { d_{n}^{-1}} \cd (\vpi^{n}f^{\flat}_{(\vphi, \tau)})_{*} [{}^{\prime}\cG_{E,?} ]^{\bullet } \qquad \in \  K_{0}({}^{\prime}\cG_{F, ?}^{2})_{\Q},$$
where in the case of $\prime=*$, the normalisation is made viewing ${}^{*}\cG_{E , [N]}$ as a scheme of degree $ed_{N}$ over the base $k$. 

It is easy to verify that the definition is  independent of $n\geq c(\vphi, \tau)$. 
\end{defi}

For $Z^{\flat}= Z^{\flat}(\vphi, \tau), V^{\flat}(\lm)$, we  lighten the notation by $[Z^{\flat}]^{\bullet }_{N}:= [Z^{\flat}]^{\bullet }_{[N]}.$

\begin{lemm} \lb{comp jN}With reference to the maps in \eqref{jN}, 
for  $Z^{\flat}= Z^{\flat}(\vphi, \tau)$, $ V^{\flat}(\lm)$ we have
$$\pi_{N, * } [Z^{\flat}]^{\circ}= [Z^{\flat}]^{\circ}_{N} = d_{N}^{-1} \cd [Z^{\flat}]_{N}= d_{N}^{-1}\cd j_{N}^{*}[Z^{\flat}].$$
\end{lemm}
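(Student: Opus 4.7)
The plan is to prove the three equalities separately, since each encodes a different compatibility; the arguments will be parallel for $V^{\flat}(\lm)$ and $Z^{\flat}(\vphi,\tau)$.

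First, for $\pi_{N,*}[Z^{\flat}]^{\circ} = [Z^{\flat}]^{\circ}_N$, I would observe that the defining morphisms $f^{\flat}_\lm$ and $\vpi^n f^{\flat}_{(\vphi,\tau)}$ commute with the truncation projections $\pi_N$ on source and target, and that pushing forward along the resulting commutative square reduces the claim to the identity $\pi_{N,*}[{}^*\cG_K]^{\circ} = [{}^*\cG_{K,[N]}]^{\circ}$ for $K \in \{F,E\}$---this is built into the construction of normalized classes at infinite level in the pro-category of the Appendix. Second, for $[Z^{\flat}]^{\circ}_N = d_N^{-1}[Z^{\flat}]_{[N]}$, I would simply unwind normalizations: under the identification ${}^*\cG_{K,[N]} \cong {}^\vee\cG_{K,[N]}$ from \eqref{jN} the underlying fundamental classes coincide, and the two sides differ only through the $\circ$-normalization, which produces the factor $d_N^{-1}$; for the CM cycle, the extra $e$ coming from $\deg_k({}^*\cG_{E,[N]}) = ed_N$ is absorbed into the leading coefficient $d_n$ in the definition of $[Z^{\flat}(\vphi,\tau)]^{\circ}$.

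The hard part will be the third equality $d_N^{-1}[Z^{\flat}]_{[N]} = d_N^{-1}j_N^{*}[Z^{\flat}]$, which reduces to the base-change identity $[Z^{\flat}]_{[N]} = j_N^{*}[Z^{\flat}]$. The plan is to verify that the square
\[
\xymatrix{
{}^\vee\cG_{K,[N]} \ar@{^{(}->}[d]_{j_N} \ar[r]^-{f^{\flat}} & {}^\vee\cG_{F,[N]}^{2} \ar@{^{(}->}[d]^{j_N} \\
{}^\vee\cG_K \ar[r]^-{f^{\flat}} & {}^\vee\cG_F^{2}
}
\]
is Cartesian and then apply flat base change to the fundamental class $[{}^\vee\cG_K]$. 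The vertical case ($K=F$, $f^{\flat} = f^{\flat}_\lm$) is immediate from $\OO_F$-linearity. The main obstacle lies in the CM case ($K=E$, $f^{\flat} = \vpi^n f^{\flat}_{(\vphi,\tau)}$): one must check scheme-theoretically that the preimage of ${}^\vee\cG_{F,[N]}^{2}$ under $y \mapsto \vphi \circ y \circ \vpi^n \tau^\vee$ equals ${}^\vee\cG_{E,[N]}$. This requires that the source-side twist by $\vpi^n$ precisely compensates for the $\OO_F$-height of $\vphi$, which is ensured by the condition $n \geq c(\vphi,\tau) = {\rm ht}(\vphi)$ imposed in the definition of the CM cycle.
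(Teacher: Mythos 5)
The paper's own proof is a one-line \emph{``Clear from the definitions''}; the lemma is purely a bookkeeping statement about the various incarnations of the same cycle at finite, truncated, and infinite level, and the intended argument is simply to unwind the defining formulas for $[Z^{\flat}]^{\bullet}_{?}$ in each case. Your proposal turns this into a three-part argument, and the first part (on $\pi_{N,*}$) is fine. The other two parts, however, have real problems.

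For the middle equality, your explanation of the factor $e$ is incorrect. You claim the discrepancy between the $\circ$-normalisation of $[{}^{*}\cG_{E,[N]}]$ (which introduces $(ed_{N})^{-1}$) and the plain $[{}^{\vee}\cG_{E,[N]}]$ is ``absorbed into the leading coefficient $d_{n}$''. But the coefficient $d_{n}$ in the definition of $[Z^{\flat}(\vphi,\tau)]^{\bullet}_{?}$ is the same for $\bullet = \circ$ and $\bullet = \emptyset$; there is nothing for it to absorb. If one carries out the unwinding literally, the ratio between $[Z^{\flat}]^{\circ}_{N}$ and $[Z^{\flat}]_{[N]}$ is governed entirely by the ratio of the normalisations of $[{}^{*}\cG_{E,[N]}]^{\circ}$ and $[{}^{\vee}\cG_{E,[N]}]$ under the identification \eqref{jN}, and a hand-wave does not settle it. You should either make the bookkeeping fully explicit or flag the tension.

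The most serious gap is in the third equality. Your plan is to exhibit a Cartesian square with vertical arrows $j_{N}$ and horizontal arrows $\vpi^{n}f^{\flat}_{(\vphi,\tau)}$ and invoke base change. That square is \emph{not} Cartesian in general, and it is not the source-side twist by $\vpi^{n}$ that ``compensates'' for anything: the morphism $\vpi^{n}f^{\flat}_{(\vphi,\tau)}$ is generally a nontrivial isogeny onto its image, not a closed immersion. For instance, when $\vphi = \mathrm{id}$ and $\tau$ is an isomorphism of lattices (so $c(\vphi,\tau)=0$), the map factors as a closed immersion $\iota\colon \cG_{E}\into\cG_{F}^{2}$ precomposed with $[\vpi^{n}]$, and the fibre product of $j_{N}$ with $\vpi^{n}f^{\flat}$ is then $\cG_{E}[\vpi^{N+n}]$, which strictly contains $^{\vee}\cG_{E,[N]} = \cG_{E}[\vpi^{N}]$ as soon as $n\geq 1$. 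The condition $n \geq c(\vphi,\tau)$ guarantees only that $\vpi^{n}\tau^{\vee}$ lands in $\OO_{E}^{\vee}$; it does not remove the kernel of $\vpi^{n}f^{\flat}$. So the flat-base-change route, as stated, collapses. Whatever argument one gives for $j_{N}^{*}[Z^{\flat}] = [Z^{\flat}]_{[N]}$ must take into account that the pushforward $(\vpi^{n}f^{\flat})_{*}$ carries a multiplicity coming from this kernel, and cannot simply be a formal base change along a Cartesian diagram involving the \emph{source} ${}^{\vee}\cG_{E}$.
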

\begin{proof}
Clear from the definitions. \end{proof}

\begin{rema}\lb{compare tau Li}
We compare our setup and our CM cycles with those of \cite{li} (specialised to the case $K=E$, $h=1$).
Let ${\rm t}:={\rm tr}_{E/F}\in\Hom_{\OO_{F}}(\OO_{E}, \OO_{F})=:\cD_{E}^{-1}$, and recall from the introduction that $\delta^{-1}$ denotes an $\OO_{E}$-generator of $\cD_{E}^{-1}$. Denote by  $(-)^{\vee_{L}}$ the duality defined in \cite[first paragraph of \S~2 and paragraph after (2.3)]{li}, and denoted by $(-)^{\vee} $ there. 
For $\tau\in  {\rm Isom}_{F}(E, F^{2})$, let 
$$
\tau_{L}:=\tau \delta {\rm t}
\qquad
\in {\rm Isom}_{F}(E, F^{2})\ot_{\OO_{E}} \cD_{E}\ot_{\OO_{E}} \cD_{E}^{-1} = {\rm Isom}_{F}(E, F^{2}).
$$
Then
 $\tau_{L}^{\vee_{L}}={\rm t}^{-1}\tau_{L}^{\vee} =\delta\tau^{\vee}$. Moreover, 
  by comparing \eqref{ht tau} and \cite[(2.13)]{li}, we see that  $(\vphi, \tau)\in {\rm LT}(E, F)^{0}$ if and only if $(\vphi, \tau_{L})$ belongs to set ${\rm Equi}_{1}(E/F)$ of \cite[Definition 2.19]{li}. Finally,
 it follows from the respective definitions that
our cycle $[Z^{\flat}(\vphi, \tau)]$ equals the cycle $Z_{\infty}(\vphi, \tau_{L})$ of \cite[Definition 3.10]{li}.
\end{rema}

\subsection{Approximation} We show that the special cycles in $T\cG_{F}^{2}={}^{*}\cG_{F}^{2}$ approximate those in $\cM_{\infty}$. The comparison is done via the following maps:
\beq
\xymatrix{
{}^{*}\cG_{F}^{2} \ar[r]^{\vpi^{m}a_{F}} & \cM_{\infty} &{}^{*}\cG_{E} \ar[r]^{\vpi^{m}a_{E}} & \cN_{\infty} &{}^{*}\cG_{F} \ar[r]^{\vpi^{m}a_\Ig} & \Ig_{\infty}
 \\
\alpha \ar@{|->}[r] &[\cG_{F}, {\rm id}, \vpi^{m}\alpha] , & \beta \ar@{|->}[r] &[\cG_{E}, {\rm id}, \vpi^{m}\beta], & \gamma \ar@{|->}[r] &[\cG_{F}, {\rm id}, \vpi^{m}\gamma].
}
\eeq
(The notation is meant to evoke  non-existent maps $a_{F}$, $a_{E}$, $a_\Ig$.)
\begin{lemm} \lb{is fo} 
For all $m\geq 1$, the maps $\vpi^{m}a_{F}$, $\vpi^{m}a_{E}$, $\vpi^{m}a_\Ig$  are well-defined 
 closed immersions of finite origin (Definition \ref{fin ind}).  More precisely, for any $N$,  those  maps  respectively originate from  the closed immersions
\beq\lb{trunc af}
\vpi^{m}a_{F}\colon {}^{*} \cG_{F, [N]}^{2}\into \cM_{N+m}, \quad \vpi^{m}a_{E}\colon {}^{*}\cG_{E,[N]}\into \cN_{N+m}, \quad \vpi^{m}a_\Ig\colon  {}^{*}\cG_{F,[N]}\into \Ig_{N+m}.\eeq
\end{lemm}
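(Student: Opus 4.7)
The plan is to construct each finite-level map ${}^{*}\cG_{F,[N]}^2 \hookrightarrow \cM_{N+m}$ (and analogously the $E$ and Igusa variants), verify that the prescribed image is a valid Drinfeld (resp. Igusa) structure, show each such map is a closed immersion by comparing completed local rings, and finally exhibit the Cartesian squares under the transition maps in $N$, which gives the finite-origin condition.

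For the first map, take an $A$-point $\alpha\colon \OO_{F,[N]}^{*,2}\to \cG_F(A)$ of the source for $A\in\mathscr{C}_{k}$, and define $\vpi^{m}\alpha\colon \OO_{F,[N+m]}^{*,2}\to \cG_F(A)$ by $(\vpi^{m}\alpha)(x):=\alpha(\vpi^{m}x)$. The Drinfeld condition at level $N+m$ amounts to the Cartier divisor identity $(\vpi^{m}\alpha)_{*}(\OO_{F,[N+m]}^{*,2})=\cG_{F}[\vpi^{N+m}]$. Using the formal coordinate with $[\vpi^{n}](t) = t^{q^{2n}}\cdot\text{(unit)}$ over any characteristic-$p$ base, a fibre count for multiplication by $\vpi^{m}$ gives $(\vpi^{m}\alpha)_{*}(\OO_{F,[N+m]}^{*,2})=V\bigl(\prod_{y}(t-a_{y})^{q^{2m}}\bigr)$, where $y$ ranges over $\OO_{F,[N]}^{*,2}$ and $a_{y}=\alpha(y)\in A$; the Frobenius identity in characteristic $p$ rewrites this as $V\bigl(\prod_{y}(t^{q^{2m}}-a_{y}^{q^{2m}})\bigr)$, and the identity then follows from the $\vpi^N$-torsion constraint $a_y^{q^{2N}}=0$ together with an algebraic manipulation in the formal group law. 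The parallel constructions of $\vpi^{m}a_{E}$ and $\vpi^{m}a_{\Ig}$ follow the same pattern, with the $\OO_{E}$-structure or the Igusa rescaling $q^{n}\cdot\gamma_{*}$ replacing the Drinfeld condition as appropriate.

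The closed immersion property at finite level is checked by comparing completed local rings at closed points: the target $\cM_{N+m}$ is regular by Drinfeld's theorem, with completed local ring explicitly presented via generators of the universal $\vpi^{N+m}$-torsion, while the source has the explicit presentation $k\llbracket t_1, t_2\rrbracket/(t_1^{q^{2N}}, t_2^{q^{2N}})$. The map pulls back the Drinfeld generators on the target to their $[\vpi^{m}]$-images on the source, and is visibly surjective because $[\vpi^m]$ on the formal group is finite flat. Compatibility with the transition maps ${}^{*}\cG_{F,[N+1]}^2 \to {}^{*}\cG_{F,[N]}^2$ and $\cM_{N+1+m}\to\cM_{N+m}$ is built into the defining formula, the squares being Cartesian; this produces the infinite-level closed immersion of finite origin. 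The main obstacle is the divisor identity in the well-definedness step, which genuinely depends on the characteristic-$p$ base: the Frobenius collapse combined with the $\vpi^N$-nilpotency of $a_y$ is what reconciles the naive computation $q^{2m}\alpha_*(\OO_{F,[N]}^{*,2})$ with the required divisor $\cG_F[\vpi^{N+m}]$, and this reconciliation (and its analogues in the $E$ and Igusa cases) constitutes the technical core of the proof.
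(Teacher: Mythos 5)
The paper's proof of Lemma \ref{is fo} is a citation: well-definedness is \cite[Lemma 3.7]{li}, and the closed-immersion and finite-origin properties are \cite[Proposition 3.9]{li} (specialised to $\vphi=\id$, $\tau=\id$), with the Igusa case handled by the same methods. You instead attempt a self-contained proof, which is a genuinely different route. The overall architecture you propose (verify the divisor identity, then check closed immersion on completed local rings, then exhibit the Cartesian squares giving finite origin) is the right shape, and your observation that well-definedness is the technical core is exactly correct.

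However, there is a real gap precisely at that core. You write that the divisor identity ``follows from the $\vpi^{N}$-torsion constraint $a_{y}^{q^{2N}}=0$ together with an algebraic manipulation in the formal group law,'' but nilpotency of the $a_{y}$ is \emph{not} sufficient: for an arbitrary collection of elements with $a_{y}^{q^{2N}}=0$, the polynomial $\prod_{y}(t^{q^{2m}}-a_{y}^{q^{2m}})$ need not equal $t^{q^{2(N+m)}}$ (the elementary symmetric functions $e_{j}(a_{y}^{q^{2m}})$ need not vanish). What saves the day is the functional relation $a_{y}^{q^{2m}}=a_{\vpi^{m}y}$ (coming from $\OO_{F}$-linearity of $\alpha$ and $[\vpi]=\mathrm{Frob}^{2}$ on $\cG_{F}$ over a $k$-algebra base), which identifies the multiset $\{a_{y}^{q^{2m}}\}_{y\in\OO_{F,[N]}^{*,2}}$ with $\{a_{z}\}_{z\in\OO_{F,[N-m]}^{*,2}}$ each counted $q^{2m}$ times, after which one must iterate (or run an induction on $\lceil N/m\rceil$) to reach the base case $a_{0}=0$. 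That iteration is where the proof actually lives, and your write-up stops just before it. In addition, your claim that the Drinfeld generators of $\cM_{N+m}$ pull back to their ``$[\vpi^{m}]$-images'' is off by a Frobenius twist: since $(\vpi^{m}\alpha)(\vpi^{-(N+m)}e_{i})=\alpha(\vpi^{-N}e_{i})=t_{i}$, the generators pull back to the coordinates $t_{i}$ themselves, and surjectivity of the map on local rings follows directly from Drinfeld's presentation of $\cM_{N+m}$, not from finite flatness of $[\vpi^{m}]$.
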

For $N=0$, the  maps  \eqref{trunc af} are simply the inclusion of the closed point $\Spec k$. 
\begin{proof} The well-definedness  follows from   \cite[Lemma 3.5]{li}. The rest of the statement, at least for $N=0$ (which suffices) and for $a_{F}$ and $a_{E}$, is a special case of \cite[Proposition 3.9 (1)]{li} -- in whose notation (to which we add a subscript `$L$' for clarity) we should take  $h_{L}=2, K_{L}=F_{L}=F$ (for $a_{F}$) or $h_{L}=1, K_{L}=F_{L}=E$ (for $a_{E}$), and $\vphi_{L}=\tau_{L}={\rm id}$.
  The arguments of \emph{loc. cit.} also apply to the (easier) case of $a_\Ig$.
\end{proof}

\begin{prop}  \lb{iscart}
For all $m\geq 1 $ and $n\geq c(\vphi, \tau)$, the following diagrams are Cartesian:
\beqq
\xymatrix{
{}^{*}\cG_{E}\ar[rr]^{\vpi^{n}f^{\flat}_{(\vphi, \tau)}} \ar[d]_{\vpi^{m+n}a_{E}}  &&{}^{*}\cG_{F}^{2} \ar[d]^{\vpi^{m}a_{F}}
& &
{}^{*}\cG_{F}\ar[rr]^{f^{\flat}_{\lm} }\ar[d]_{\vpi^{m}a_\Ig}  &&{}^{*}\cG_{F}^{2} \ar[d]^{\vpi^{m}a_{F}}\\
\cN_{\infty } \ar[rr]^{f_{(\vphi, \tau)}} && \cM_{\infty},
&&
\Ig_{\infty } \ar[rr]^{f_{\lm}} && \cM_{\infty}. 
}\eeqq
\end{prop}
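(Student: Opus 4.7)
The plan is to verify the universal property of the fibre product on $A$-valued points, after reducing to a statement at finite truncation level. By Lemma \ref{is fo}, the vertical maps in the two squares are compatible systems of closed immersions $\vpi^m a_F\colon {}^*\cG_{F,[N]}^2 \into \cM_{N+m}$, $\vpi^{m+n}a_E\colon {}^*\cG_{E,[N]} \into \cN_{N+m+n}$ (the extra $n$ accounting for the conductor, as $f_{(\vphi,\tau)}$ raises Drinfeld level by $n$), and $\vpi^m a_{\Ig}\colon {}^*\cG_{F,[N]} \into \Ig_{N+m}$. The horizontal maps $f_{(\vphi,\tau)}$ and $f_\lm$ restrict to morphisms between the corresponding finite-level objects, so each infinite-level diagram is the inverse limit of its analogues at each $N$, and it suffices to show those are Cartesian for every $N\geq 0$.

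For the vertical (right) square this is almost formal. An $A$-point of ${}^*\cG_{F,[N]}\ts_{\cM_{N+m}}\Ig_{N+m}$ is a pair $(\gamma,\alpha_F)$ with $[\cG_F,\id,(\vpi^m\gamma)\circ\lm]=[\cG_F,\id,\vpi^m\alpha_F]$ in $\cM_{N+m}(A)$, that is, $\vpi^m(\gamma\circ\lm)=\vpi^m\alpha_F$ as Drinfeld structures of level $N+m$. Since multiplication by $\vpi^m$ surjects $\OO_{F,[N+m]}^{2,*}\onto \OO_{F,[N]}^{2,*}$, this is equivalent to $\alpha_F=\gamma\circ\lm=f^\flat_\lm(\gamma)$, which is exactly the desired universal property.

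The CM (left) case follows the same pattern but invokes the full diagram \eqref{CM diag}. An $A$-point of the fibre product is a pair $(\alpha_F,[\cG,\iota,\alpha_E])$ whose images in $\cM_{N+m}$ coincide. Unwinding \eqref{def f}, this equality provides a canonical isomorphism $\cG/\alpha_{E,*}K\cong\cG_F$ of formal $\OO_F$-modules intertwining ${\baar\vphi}_{\alpha_E}\circ\iota\circ\vphi^{-1}$ with $\id$; combined with the identification of $\alpha_{E,*}K$ with the kernel of the isogeny $\vphi\colon\cG_E\to\cG_F$ (forced by the choice $K=(\OO_E/\tau^{-1}\OO_F^2)^*$), this pins down $[\cG,\iota]=[\cG_E,\id]$, whence $\alpha_E=\vpi^{m+n}\beta$ for a unique $\beta\in{}^*\cG_{E,[N]}(A)$. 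The matching of level structures then identifies $\alpha_E'\circ\tau^*$, transported along $\vphi$, with $\vphi\circ(\vpi^{m+n}\beta)\circ\tau^\vee=\vpi^m\cdot\vphi\circ\beta\circ\vpi^n\tau^\vee$; surjectivity of $\vpi^m$ again gives $\alpha_F=\vpi^n f^\flat_{(\vphi,\tau)}(\beta)$.

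The main obstacle is this last identification: tracing through \eqref{CM diag} to see that the diagonal $\alpha_E'\circ\tau^*\colon\OO_F^{2,*}\to\cG/\alpha_{E,*}K$ agrees, after the canonical identification of the target with $\cG_F$ via $\vphi$, with the formula defining $\vpi^n f^\flat_{(\vphi,\tau)}$. This amounts to the compatibility of $\tau^*$ and $\tau^\vee$ under the duality $\OO_E^*\cong\cD_E^{-1}\ot_{\OO_F}\OO_E$, and mirrors the local content of \cite[Proposition 3.9]{li} specialised to $h=1$.
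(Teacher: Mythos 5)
Your overall strategy is the same as the paper's: verify the Cartesian property functorially on $A$-valued points and use a surjectivity/divisibility argument on the $\OO_F$-module side. The paper works directly at infinite level and proves only the vertical diagram in detail, declaring the CM diagram similar and citing \cite[Proposition~3.8(2)]{li}; you instead reduce to finite truncations $N$ first, which is legitimate but introduces a subtlety that your argument does not address.

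The issue is in the vertical square. At level $N$, a point of $\Ig_{N+m}\ts_{\cM_{N+m}}{}^{*}\cG_{F,[N]}^{2}$ is a pair $(\,[\cG,\iota,\gamma'],\,\alpha_F\,)$ where $\gamma'$ is an \emph{arbitrary} Igusa structure of level $N+m$, not a priori one arising from ${}^{*}\cG_{F,[N]}$. By writing the Igusa component as $\vpi^m\gamma$ with $\gamma\in{}^{*}\cG_{F,[N]}(A)$ from the start, you assume precisely the nontrivial thing to be proved: that $\gamma'$ kills $\Ker[\vpi^m]\subset\OO_{F,[N+m]}^{*}$ and so descends along $[\vpi^m]$. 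One has to extract this from the compatibility $\gamma'\circ\lm=\alpha_F\circ[\vpi^m]$: restrict to $\Ker[\vpi^m]\cap\OO_{F,[N+m]}^{*,2}$, note that $\lm$ maps this onto $\Ker[\vpi^m]\cap\OO_{F,[N+m]}^{*}$, and conclude that $\gamma'$ vanishes there, hence factors as $\vpi^m\gamma_0$; only then does surjectivity of $[\vpi^m]$ give $\alpha_F=\gamma_0\circ\lm$. The paper's infinite-level formulation handles exactly this point in one stroke via the divisibility of $\Ker(\lm)\subset\OO_F^{*,2}$ — which is the genuine content, and is precisely what fails to be automatic at finite level (where $\Ker(\lm)$ truncates to a non-divisible module). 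For the CM diagram you give more detail than the paper does, but you yourself flag an unresolved step in the final paragraph; closing it is exactly the computation the paper outsources to Li.
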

\begin{proof} We verify the statement for the second diagram. The proof for the first diagram is similar and also a special case of \cite[Proposition 3.9 (2)]{li}. It suffices to verify that the diagram is Cartesian on $A$-valued points functorially in objects $A$ of $\cC_{k}$. We write the proof for the functors in infinite level where the idea is clearest.  The $A$-point $\alpha_{0}\colon \OO_{F}^{*,2}\to \cG_{F}$ of ${}^{*}\cG_{F}^{2}$ is a point of the Cartesian product of the diagram (which is a closed subscheme of ${}^{*}\cG_{F}^{2}$ as  $f_{\lm}$ is a closed immersion) if and only if $\vpi^{m}\alpha_{0}  =\gamma\circ\lm$ factors through $\lm$ and, obviously, through $[\vpi^{m}]$. As $\Ker(\lm)$  is divisible, this is equivalent to $\gamma =\vpi^{m}\gamma_{0}$ for some $\gamma_{0}\colon \OO_{F}^{*}\to \cG_{F}$, that is $\alpha_{0}= f_{\lm}(\gamma_{0})$ for $\gamma_{0}\in {}^{*}\cG_{F}$.
\end{proof}
\begin{coro} \lb{compare cycles} 
We have 
\beq
 d_{E,m}\cd(\vpi^{m}a_{F})^{*} [Z(\vphi, \tau)]^{\circ} &= [Z^{\flat}(\vphi, \tau)]^{\circ},\\
d_{\Ig,m} \cd(\vpi^{m}a_F)^{*} [V(\lm)]^{\circ} &= [V^{\flat}(\lm)]^{\circ}.
\eeq
 in $K_{0}({}^{*}\cG_{F}^{2})_{\Q}$.
\end{coro}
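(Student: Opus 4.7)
The plan is to combine the two Cartesian diagrams of Proposition \ref{iscart} with the base change formula $p^{*}f_{*} = g_{*}q^{*}$ in $K$-theory, and then match normalisations. Applied to the fundamental classes $[\Ig_{\infty}]^{\circ}$ and $[\cN_{\infty}]^{\circ}$, base change gives
\[
(\vpi^{m} a_{F})^{*}[Z(\vphi,\tau)]^{\circ} = (\vpi^{n} f_{(\vphi,\tau)}^{\flat})_{*}(\vpi^{m+n}a_{E})^{*}[\cN_{\infty}]^{\circ}, \quad
(\vpi^{m} a_{F})^{*}[V(\lm)]^{\circ} = (f_{\lm}^{\flat})_{*}(\vpi^{m} a_{\Ig})^{*}[\Ig_{\infty}]^{\circ}.
\]
Inserting the definitions of $[Z^{\flat}(\vphi,\tau)]^{\circ}$ and $[V^{\flat}(\lm)]^{\circ}$, the two claims reduce respectively to the identities
\[
d_{E,m}(\vpi^{m+n}a_{E})^{*}[\cN_{\infty}]^{\circ} = d_{n}\,[{}^{*}\cG_{E}]^{\circ}, \qquad
d_{\Ig,m}(\vpi^{m} a_{\Ig})^{*}[\Ig_{\infty}]^{\circ} = [{}^{*}\cG_{F}]^{\circ}
\]
in the $K$-theory of the Tate modules ${}^{*}\cG_{E}$ and ${}^{*}\cG_{F}$ respectively.

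I then verify each of these at finite level using \eqref{trunc af}. At truncation $N$, the closed immersions of \eqref{trunc af} pull the structure sheaves of $\cN_{N+m+n}$ and $\Ig_{N+m}$ back to those of ${}^{*}\cG_{E,[N]}$ and ${}^{*}\cG_{F,[N]}$; each identity thus reduces to comparing the two normalising conventions in play -- the one used to build $[\cN_{\infty}]^{\circ}$ and $[\Ig_{\infty}]^{\circ}$ from the Lubin--Tate/Igusa tower, against the one used to build $[{}^{*}\cG_{E}]^{\circ}$ and $[{}^{*}\cG_{F}]^{\circ}$ from the Tate module. Direct substitution of the values of the constants $d_{?}$ listed in \S\ref{sec4} shows that the relevant ratios stabilise for $N\geq 1$ and equal precisely the factors $d_{n}/d_{E,m}$ and $1/d_{\Ig,m}$ demanded by the two displayed identities, after which passing to the limit in $N$ concludes.

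The main obstacle is conceptual rather than computational: one must justify the base change formula at the level of pro-objects $\cM_{\infty}$, $\cN_{\infty}$, $\Ig_{\infty}$, together with the commutation of $K$-theoretic pullback with the inverse limit over $N$. This is precisely what the finite-origin framework of Lemma \ref{is fo} and the Appendix is set up to deliver: each vertical arrow of Proposition \ref{iscart} is realised, via \eqref{trunc af}, as a compatible family of closed immersions of Noetherian formal $\breve{\OO}_{F}$-schemes, for which $K$-theoretic base change along a closed immersion is standard, and the limit is well-defined inside the pro-category $\cF'$.
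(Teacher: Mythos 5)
Your proposal is correct and is essentially the paper's own argument: both apply the pullback--pushforward formula (Proposition \ref{pull p}) across the Cartesian diagrams of Proposition \ref{iscart}, then invoke Lemma \ref{is fo}/\eqref{trunc af} to identify the pullback of the normalised fundamental class at each truncation level and match the normalising constants $d_{?}$. The only cosmetic difference is order of operations --- you first peel off the pushforward and phrase the remainder as a statement about $(\vpi^{m+n}a_{E})^{*}[\cN_{\infty}]^{\circ}$ and $(\vpi^{m}a_{\Ig})^{*}[\Ig_{\infty}]^{\circ}$, while the paper computes $(\vpi^{m}a_{F})^{*}[Z(\vphi,\tau)]^{\circ}$ directly in one chain of equalities --- but the ingredients and the structure are the same.
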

\begin{proof}
By Lemma \ref{is fo}, we have  
\beq \lb{comp}
(\vpi^{m}a_{E})^{*}\OO_{\cN_{N+m}}= \OO_{\cG_{E,[N]}}, \qquad (\vpi^{m}a_\Ig)^{*}\OO_{\Ig_{N+m}}= \OO_{\cG_{F,[N]}}
\eeq
 for all $N$. Then, 
\beqq
(\vpi^{m} a_{F})^{*} [Z(\vphi, \tau)]^{\circ}  & =  (\vpi^{m} a_{F})^{*} f_{(\vphi, \tau), *} [\cN_{\infty}]^{\circ}\\
&=  (\vpi^{n}f^{\flat}_{(\vphi, \tau)})_{*} (\vpi^{m+n} a_{E})^{*} [\cN_{\infty}]^{\circ} = d_{E, m+n}^{-1}\cd (\vpi^{n}f^{\flat}_{(\vphi, \tau)})_{*} [{}^{*}\cG_{E}]^{\circ}, \eeqq
where the second equality follows from Proposition \ref{iscart}  
and 
 the pullback-pushforward formula  (Proposition \ref{pull p}),
 and the last one follows from \eqref{comp}. This proves the first identity and the same argument works for the second one.
\end{proof}

\section{Intersection numbers} \lb{sec5}
We wish to compute  the intersection multiplicity 
$$m([Z(\vphi, \tau)_{U}],  [V(\lm)_{U}])= d_{\psi_{\tau}^{-1}(U)} d_{U\cap \OO_{F}^{\ts}}\cd m( [Z(\vphi, \tau)_{U}]^{\circ} , [V(\lm)_{U}]^{\circ}) $$
in $\cM_{U}$ (here, the constants $d_{?}$ are as in \eqref{dck}).
Similarly to what is done in \cite{li} for intersections of CM cycles, we first compare this with an intersection multiplicity in formal groups, then compute the latter.
\subsection{Comparison}\lb{41}
In the next proposition, we compare intersection multiplicities in Lubin--Tate spaces and in formal groups. 

Define for all levels $U$ and integers $N$:
\beq\lb{m0}
m^{\circ}( [Z(\vphi, \tau)_{U}]^{\circ} , [V(\lm)_{U}]^{\circ}) &:= \vol(U)^{-1}\cd m( [Z(\vphi, \tau)_{U}]^{\circ} , [V(\lm)_{U}]^{\circ})\\
m^{\circ}( [Z^{\flat}(\vphi, \tau)]_{N}^{\circ} , [V^{\flat}(\lm)]_{N}^{\circ}) &:= d_{N}^{2} \cd 
m([Z^{\flat}(\vphi, \tau)]_{N}^{\circ} , [V^{\flat}(\lm)]_{N}^{\circ}),
\eeq
where $\vol(U)$ is normalised by $\vol(\GL_{2}(\OO_{F}))=1$.  We note that for all $N\geq 1$, $$\vol(U_{N})= \zeta_{F}(1)\zeta_{F}(2) q^{-4N}.$$
\begin{prop} 
\begin{enumerate}
\item
The limits 
\beq 
m^{\circ}( [Z(\vphi, \tau)]^{\circ} , [V(\lm)]^{\circ}) :=\lim_{U\to \{1\}} m^{\circ}( [Z(\vphi, \tau)_{U}]^{\circ} , [V(\lm)_{U}]^{\circ}) \\
m^{\circ}( [Z^{\flat}(\vphi, \tau)]^{\circ} , [V^{\flat}(\lm)]^{\circ}) := \lim_{N\to \infty} m^{\circ}( [Z^{\flat}(\vphi, \tau)]_{N}^{\circ} , [V^{\flat}(\lm)]_{N}^{\circ}) 
\eeq
exist as limits of eventually constant sequences. 
\item  We have the following identity relating  intersection multiplicities in $\cM_{\infty}$, $(T\cG_{F})^{2}$, and $\cG_{F}^{2}$:
$$m^{\circ}( [Z(\vphi, \tau)]^{\circ} , [V(\lm)]^{\circ})  = {\zeta_{E}(1)\over  \zeta_{F}(2)} \cd m^{\circ}( [Z^{\flat}(\vphi, \tau)]^{\circ} , [V^{\flat}(\lm)]^{\circ}) =  {\zeta_{E}(1)\over e \zeta_{F}(2)} \cd m( [Z^{\flat}(\vphi, \tau)], [V^{\flat}(\lm)]) .$$
\end{enumerate}
\end{prop}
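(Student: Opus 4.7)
The plan is to deduce both parts from Corollary \ref{compare cycles} together with Proposition \ref{iscart} and the pullback--pushforward formula from the Appendix. The key numerical observation is that, using $\vol(U_N)=\zeta_F(1)\zeta_F(2)q^{-4N}$, $d_{E,m}=e\zeta_E(1)^{-1}q^{2m}$, and $d_{\Ig,m}=\zeta_F(1)^{-1}q^{2m}$, one has the identity
$$\frac{1}{\vol(U_m)\, d_{E,m}\, d_{\Ig,m}} \;=\; \frac{\zeta_{E}(1)}{e\,\zeta_{F}(2)},$$
which accounts exactly for the ratio appearing in the statement.

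For part 1, eventual constancy follows from standard projection-formula arguments. On the Lubin--Tate side, once $U$ is small enough that the maps $f_{(\vphi,\tau),U}$ and $f_{\lm,U}$ have reached their stable form (in particular, the image of $\cN_{C}$ is a genuine closed immersion of the quasicanonical lift and $V(\lm)_U$ carries its reduced structure), any further transition $p\colon \cM_{U'}\to\cM_{U}$ is finite flat of degree $[U:U']=\vol(U)/\vol(U')$, and the cycles $[Z]_{U}^\circ,[V]_{U}^\circ$ pull back to $[Z]_{U'}^\circ,[V]_{U'}^\circ$ up to this degree. The projection formula then gives $m([Z]_{U'}^\circ,[V]_{U'}^\circ)=[U:U']\cd m([Z]_{U}^\circ,[V]_{U}^\circ)$, so multiplication by $\vol(U)^{-1}$ kills the covering degree and the sequence is eventually constant. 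The identical argument applies on $(T\cG_F)^2$ using the projections $\pi_N$ of \eqref{jN} and Lemma \ref{comp jN}, establishing eventual constancy of $d_N^2\cd m([Z^\flat]_N^\circ,[V^\flat]_N^\circ)$.

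For part 2, I pull both cycles back via the closed immersion of finite origin $\vpi^m a_F\colon (T\cG_F)^2\hookrightarrow\cM_\infty$ of Lemma \ref{is fo}; by Corollary \ref{compare cycles},
$$(\vpi^m a_F)^*[Z(\vphi,\tau)]^\circ=d_{E,m}^{-1}[Z^\flat(\vphi,\tau)]^\circ,\qquad (\vpi^m a_F)^*[V(\lm)]^\circ=d_{\Ig,m}^{-1}[V^\flat(\lm)]^\circ.$$
The Cartesian squares of Proposition \ref{iscart} together with the Appendix's pullback--pushforward formula identify the intersection number at level $U_m$ with the intersection of the pulled-back cycles on the appropriate truncation of $(T\cG_F)^2$, once one divides by $\vol(U_m)^{-1}$. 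Multiplying out the constants with the numerical identity above yields the first equality. The second equality then comes from Lemma \ref{comp jN}: for $N$ larger than the conductor $c(\vphi,\tau)$, both $[Z^\flat]$ and $[V^\flat]$ are supported in $\cG_{F,[N]}^2$, the inclusion $j_N$ is a closed immersion through which these supports factor, and the prefactor $d_N^2$ in the definition of $m^\circ$ precisely cancels the two $d_N^{-1}$ factors produced by $[Z^\flat]_N^\circ=d_N^{-1}j_N^*[Z^\flat]$ and the analogous identity for $V^\flat$.

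The delicate point will be making the pullback--pushforward step fully rigorous: one must verify that the intersection multiplicity on $\cM_{U_m}$, with the volume normalisation, coincides with that of the pulled-back cycles on the finite truncation ${}^*\cG_{F,[N]}^2$, uniformly in $m,N$ once these exceed $c(\vphi,\tau)$ and the depth of $U$. This requires carefully combining the ``finite origin'' formalism of the Appendix with the Cartesian diagrams of Proposition \ref{iscart}, and is the local analogue of the passage to infinite level in Li's work \cite{li}; once this compatibility is in place, the rest is bookkeeping of the normalisation constants.
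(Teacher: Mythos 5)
Your numerical identity $\vol(U_m)^{-1}d_{E,m}^{-1}d_{\Ig,m}^{-1}=\zeta_E(1)/(e\zeta_F(2))$ is exactly the constant bookkeeping the paper performs, and your use of Corollary \ref{compare cycles}, Proposition \ref{iscart}, and Lemma \ref{comp jN} for part 2 matches the paper's ingredients. But your argument for part 1 contains a genuine gap. You assert that the cycles $[V(\lm)]_U^\circ$ pull back to $[V(\lm)]_{U'}^\circ$ ``up to the degree $[U:U']$.'' This is false: as the paper itself records at the end of the proof of Corollary \ref{fininf},
$$\pi_{U'U}^{*}[V(\lm)]^{\circ}_{U}=\sum_{g\in U/U'}[V(\lm g)]^{\circ}_{U'},$$
a sum over the $U$-orbit of irreducible components, and the individual multiplicities $m([Z]_{U'}^\circ,[V(\lm g)]_{U'}^\circ)$ for varying $g$ are not equal in general. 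So the projection formula does \emph{not} yield your claimed relation $m([Z]_{U'}^\circ,[V]_{U'}^\circ)=[U:U']\,m([Z]_U^\circ,[V]_U^\circ)$. (Even if it did, that relation would give $\vol(U')^{-1}m_{U'}=[U:U']^2\vol(U)^{-1}m_U$, which is not eventually constant; the sign of the degree factor is also off.) The nontrivial content of part 1 is precisely that the orbit average stabilizes, and this cannot be extracted from a naive pullback of a single component.

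The paper does not attempt to prove part 1 on its own. Instead it establishes, for all sufficiently large $N$ and $M$, the single chain of equalities
$$\frac{e\zeta_F(2)}{\zeta_E(1)}\,m^{\circ}([Z(\vphi,\tau)]_{N+M}^{\circ},[V(\lm)]_{N+M}^{\circ})=m^{\circ}([Z^{\flat}(\vphi,\tau)]_{N}^{\circ},[V^{\flat}(\lm)]_{N}^{\circ})=m([Z^{\flat}(\vphi,\tau)]_{[N+M]},[V^{\flat}(\lm)]_{[N+M]})=m([Z^{\flat}(\vphi,\tau)],[V^{\flat}(\lm)]),$$
and since the right-most term is a fixed number, eventual constancy (part 1) is read off for free. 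This is the route you should take: prove part 2 at each finite level and deduce part 1. For the remaining step you correctly flag as delicate, namely the identification of $m([Z]_{N+M}^{\circ},[V]_{N+M}^{\circ})$ with the intersection of the pulled-back cycles on a truncation, the paper discharges it by the Artinian-thickening argument of Li (\cite[\S4.4]{li}): $\vpi^{M}a_{F}$ identifies $\cG_{F,[N]}^{2}$ with $\Spf\OO_{\cM_{N+M}}/\frakm_{\cM_{N+M}}^{q^{2N}}$, so once $q^{2N}$ exceeds the (finite) intersection length the multiplicities agree. You should incorporate this explicitly rather than defer it.
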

\begin{proof} 
To establish the entirety of the proposition, it is enough to show that for all sufficiently large $N$ and $M$, 
\beq\lb{one23}
{ \zeta_{F}(2)\over \zeta_{E}(1)}\cd m^{\circ}( [Z(\vphi, \tau)]_{N+M}^{\circ} , [V(\lm)]_{N+M}^{\circ})  &=   m^{\circ}( [Z^{\flat}(\vphi, \tau)]_{N}^{\circ} , [V^{\flat}(\lm)]_{N}^{\circ})\\
= m( [Z^{\flat}(\vphi, \tau)]_{N+M}^{} , [V^{\flat}(\lm)]_{N+M}) &=  m( [Z^{\flat}(\vphi, \tau)], [V^{\flat}(\lm)]), \eeq
where for a special  cycle $Z$ in $\cM_{\infty}$ we have denoted  $[Z]^{\circ}_{N}:= [Z^{\circ}_{U_{N}}]$.

The second equality follows trivially from the definitions and Lemma \ref{comp jN}. 
 The third one is proved exactly in the same way as \cite[(4.10)]{li}. We recall Li's idea: the intersection number in $\cG_{F}^{2}$, as the length of a module supported at the closed point of $\cG_{F}^{2}$, is the same as the intersection number of the restrictions to  a sufficiently thick Artinian thickening
$$\Spf \OO_{\cG_{F}^{2}} /\frakm_{\cG_{F}^{2}}^{q^{2N}} \stackrel{\iota_{N}}{\hookrightarrow}  \cG_{F, [N]}^{2}\subset \cG_{F}^{2},$$
 where the natural inclusion $\iota_{N}$ is the map denoted by $j_{M}$ in \cite[(4.16)]{li} (with the $M$ of 
 \emph{loc. cit.} being equal to our $N$). Here, ``sufficiently thick'' means that $q^{2N}$ should be greater than the intersection number.

We now  prove the first equality based on the argument to prove \cite[(4.8), (4.9)]{li}, only writing the details that are different from \emph{loc. cit.} By (the proof of) Corollary \ref{compare cycles}, for all sufficiently large $N$ and  $M$ we have 
$${d_{E,M}d_{\Ig, M} } d_{2N} \cd m\left((\vpi^{M}a_{F})^{*} [Z(\vphi, \tau)]^{\circ}_{N+M}, (\vpi^{M}a_{F})^{*} [V(\lm)]^{\circ}_{N+M}\right) = d_{2N}\cd m \left(  [Z^{\flat}(\vphi, \tau)]^{\circ}_{N}, [V^{\flat}(\lm)]^{\circ}_{N}\right)$$
as intersections in $(\cG_{F,[N]})^{2}$.
Thus it suffices to observe that
  $${d_{E,M}d_{\Ig, M} } d_{2N} =  \zeta_{E}(1)^{-1}\zeta_{F}(1)^{-1}\cd q^{4(N+M)} =  \zeta_{E}(1)^{-1}  \zeta_{F}(2) \cd \vol(U_{N+M})^{-1},$$  and to show that 
\beq \lb{art}m((\vpi^{M}a_{F})^{*} [Z(\vphi, \tau)]^{\circ}_{N+M}, (\vpi^{M}a_{F})^{*} [V(\lm)]^{\circ}_{N+M}) =
m( [Z(\vphi, \tau)]_{N+M}^{\circ} , [V(\lm)]_{N+M}^{\circ}) .\eeq

The argument on  restrictions to sufficiently thick Artinian thickenings  sketched above for the first equality in \eqref{one23} also applies to prove \eqref{art}: by \cite[\S 4.4]{li}, $\vpi^{M}a_{F}\circ \iota_{N}$ identifies $\Spf \OO_{\cM_{N+M}}/\frakm_{{\cM_{N+M}}}^{q^{2N}}\subset \cM_{N+M}$ with $\Spf \OO_{\cG_{F}^{2}} /\frakm_{\cG_{F}^{2}}^{q^{2N}}\subset \cG_{F}^{2}$.
\end{proof}

\begin{coro}\lb{fininf} We have 
$$m^{\circ}( [Z(\vphi, \tau)_{U}]^{\circ} , [V(\lm)_{U}]^{\circ}) =
 {\zeta_{E}(1)\over  \zeta_{F}(2)} \cd\vol(U)^{-1}
\cd
\int_{U}
 m( [Z^{\flat}(\vphi, \tau)], [V^{\flat}(\lm g)])\,  dg.$$
\end{coro}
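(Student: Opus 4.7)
The plan is to deduce the corollary from the preceding proposition via a level-comparison identity. Substituting part (2) of the proposition into the right-hand side of the corollary and cancelling the common factor $\vol(U)^{-1}$, one sees that the corollary is equivalent to the identity
\begin{equation*}
m([Z(\vphi, \tau)]_U^\circ, [V(\lm)]_U^\circ) = \int_U m^\circ([Z(\vphi, \tau)]^\circ, [V(\lm g)]^\circ) \, dg. \tag{$\ast$}
\end{equation*}

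Next, I would observe that the integrand in $(\ast)$ is locally constant on $U$: the cycle $V^\flat(\lm g)$, and hence $m([Z^\flat(\vphi, \tau)], [V^\flat(\lm g)])$, depends on $\lm g \in {\bf P}^1(F)$ only modulo a sufficiently small open subgroup. Choose a compact open subgroup $U' \subset U$ small enough that (i) the integrand is constant on each coset of $U'$ in $U$, and (ii) the eventually-constant sequences appearing in the proposition already attain their limit at level $U'$ for every relevant $g$, so that
\begin{equation*}
m^\circ([Z(\vphi, \tau)]^\circ, [V(\lm g)]^\circ) = \vol(U')^{-1} \, m([Z(\vphi, \tau)]_{U'}^\circ, [V(\lm g)]_{U'}^\circ).
\end{equation*}
The integral in $(\ast)$ then collapses to the finite sum $\sum_{g \in U/U'} m([Z(\vphi, \tau)]_{U'}^\circ, [V(\lm g)]_{U'}^\circ)$, so that $(\ast)$ reduces to the level-comparison identity
\begin{equation*}
m([Z(\vphi, \tau)]_U^\circ, [V(\lm)]_U^\circ) = \sum_{g \in U/U'} m([Z(\vphi, \tau)]_{U'}^\circ, [V(\lm g)]_{U'}^\circ). \tag{$\ast\ast$}
\end{equation*}

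Establishing $(\ast\ast)$ is the main step. I would argue via the projection formula $m(\alpha, p_*\beta) = m(p^*\alpha, \beta)$ for the finite, generically \'etale transition morphism $p \colon \cM_{U'} \to \cM_U$. On the CM side, the isomorphism $Z(\vphi, \tau)_U \cong \cN_{\psi^{-1}(U)}$ and its counterpart at level $U'$ show that the restriction of $p$ to $Z(\vphi, \tau)_{U'}$ has degree $[\psi^{-1}(U) : \psi^{-1}(U')]$, which is exactly cancelled by the ratio $d_{\psi^{-1}(U)}/d_{\psi^{-1}(U')}$ of normalisation factors, yielding the clean identity $p_*[Z(\vphi, \tau)]_{U'}^\circ = [Z(\vphi, \tau)]_U^\circ$. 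On the vertical side, the reduced scheme-theoretic preimage $p^{-1}(V(\lm)_U)$ decomposes as the disjoint union of the $V(\lm')_{U'}$ as $\lm'$ runs over $U'$-orbits inside $U\lm$; combined with the $U$-action $g \cdot V(\lm') = V(\lm' g^{-1})$ on infinite-level vertical cycles, the projection formula together with the scalar normalisation factors $d_{U \cap \OO_F^\ts}$ produces the right-hand side of $(\ast\ast)$. The delicate point is the precise matching of the Borel-stabiliser combinatorics of $\lm$ inside $\GL_2(\OO_F)$ with the Igusa-degree factors $d_{\Ig, n}$ governing the pushforward of vertical cycles between levels; this combinatorial-normalisation bookkeeping is the technical heart of the argument and the step I expect to require the most care.
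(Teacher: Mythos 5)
Your proposal follows the paper's proof closely: reduce to the level-comparison identity and establish it via the projection formula together with the pullback-pushforward decomposition of vertical cycles. One remark: the ``combinatorial-normalisation bookkeeping'' you flag as the technical heart is actually dispensed with by the definitions, since $[V(\lm)]^{\circ}$ is \emph{defined} as an inverse limit under pushforward, so $\pi_{U'U,*}[V(\lm)]^{\circ}_{U'} = [V(\lm)]^{\circ}_{U}$ holds by construction, and then $\pi_{U'U}^{*}[V(\lm)]^{\circ}_{U} = \pi_{U'U}^{*}\pi_{U'U,*}[V(\lm)]^{\circ}_{U'} = \sum_{g\in U/U'}[V(\lm g)]^{\circ}_{U'}$ is just the standard pullback-pushforward identity for the $U/U'$-quotient, with no case analysis of stabilisers or Igusa degrees required.
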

\begin{proof} By the proposition, we may replace the integrand with $ m^{\circ}( [Z(\vphi, \tau)_{U'}]^{\circ}, [V(\lm g)_{U'}]^{\circ})$ for sufficiently small $U'$. Then the result follows from the projection formula and the observation that $$\pi_{U'U}^{*}[V(\lm)_{U}]^{\circ}= \pi_{U'U}^{*}\pi_{U'U, *}[V(\lm)_{U'}]^{\circ}=\sum_{g\in  U/U'} [V(\lm g)_{U'}]^{\circ}.$$
\end{proof}

\subsection{Computation}
We first  write CM (respectively vertical) cycles in $\cG_{F}^{2}$ as rational  multiples  of cycles that are  image (respectively kernel) of a map  in  $\Hom_{\OO_{F}} (\cG_{F}, \cG_{F}^{2}) =\OO_{D}^{2}$ viewed as column vectors (respectively $\Hom_{\OO_{F}} (\cG_{F}^{2}, \cG_{F})= \OO_{D}^{2}$ viewed as row vectors). 

\paragraph{Notation}
 By the identification $\cG_{E}=\cG_{F}$ as $\OO_{F}$-modules, we have an embedding 
 \beq \lb{embdd}
 \OO_{E}=\End_{\OO_{E}\text{-Mod}} (\cG_{E})\into \OO_{D}=\End_{\OO_{F}\text{-Mod}}(\cG_{F}).\eeq
Therefore both $M_{2}(E)$ and $D$ are embedded in 
\beqq
M_{2}(D)=\Hom_{\OO_{F}} (\cG_{F}^{2}, \cG_{F}^{2})\ot F .\eeqq
We denote by $\nrd$ the reduced norm of $D$, which restricts to the relative norm $N_{E/F}$  of $E/F$ under \eqref{embdd}.

Recall from the introduction that we denote by   $\lm^{\perp}\colon \OO_{F}^{2}\onto \OO_{F}$ a map such that $\Ker(\lm^{\perp})= {\rm Im}(\lm)$, and by   $\delta^{-1}\in \cD_{E}^{-1}=\Hom_{\OO_{F}}(\OO_{E}, \OO_{F})$ a generator. For $\tau\in {\rm Isom}_{F}(E, F^{2})$, write 
\beq \lb{lm tht}
\tau\delta={\tht_{1} \choose  \tht_{2}}\in \Hom (\OO_{E}, \OO_{F}^{2}) \ot_{\OO_{E}}\cD_{E} \ot_{\OO_{F}}F= 
E^{2}, \qquad \lm^{\perp} = (a , b)\in (\OO_{F}^{2})^{\vee}.
\eeq
Let
\beq\lb{iw dec}
( \tau\delta \  | \ \baar{ {\tau}\delta}) = \Gamma_{\tau} \twomat {P_{\tau}} *{} {Q_{\tau}}, \qquad  \Gamma_{\tau}\in \GL_{2}(\OO_{E}), \eeq
be an  Iwasawa decomposition in $\GL_{2}(E)$.

\begin{lemm} With notation as in \eqref{lm tht}, \eqref{iw dec}, we have 
\beqq
\ [V^{\flat}(\lm)] &=[{\rm \Ker}( (a\ b))],\\ 
 [Z^{\flat}(\vphi, \tau)]&= |\nrd (\vphi P_{\tau})|^{-1}\cd [{\rm Im}(\vphi \Gamma_{\tau}\vphi^{-1} {1\choose 0})],
\eeqq
where the map  $\vphi \Gamma_{\tau}\vphi^{-1} {1\choose 0}\colon \cG_{E} \to\cG_{F}^{2} $  is a closed immersion.
\end{lemm}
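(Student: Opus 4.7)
Both identities are statements about cycles on $\cG_F^2$.

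\emph{Vertical case.} The map $f^\flat_\lm$ sends $x \in \cG_F$ to $(ax, bx) \in \cG_F^2$. Since $(a, b)$ generate the unit ideal in $\OO_F$, the row $(a\ b)$ can be completed to a matrix in $\GL_2(\OO_F)$, making $f^\flat_\lm$ a closed immersion with reduced image equal to $\Ker((a\ b))$ (under the paper's sign convention). Hence $[V^\flat(\lm)] = [\Ker((a\ b))]$ with multiplicity one.

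\emph{CM case.} The idea is to use the Iwasawa decomposition $\delta\tau = P_\tau\Gamma_\tau{1 \choose 0}$ to rewrite $\vpi^n f^\flat_{(\vphi,\tau)}(y) = \vphi\circ y\circ\vpi^n\tau^\vee$. Identifying $\cG_E = \Hom_{\OO_E}(\OO_E^\vee, \cG_E)$ via the chosen generator $\delta^{-1}$ of $\cD^{-1}$, the element $y\circ\vpi^n\tau^\vee \in \cG_E^2$ corresponds to the column $\vpi^n\delta\tau\cdot y_0 = \vpi^n P_\tau\Gamma_\tau{1 \choose 0}\cdot y_0$, where $y_0 \in \cG_E$ represents $y$. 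Applying $\vphi$ diagonally and using the commutativity of $E$ inside $B$ to shift $\vphi$ past the $E$-entries of $\Gamma_\tau$ (producing the conjugation $\vphi\Gamma_\tau\vphi^{-1}$), one obtains
\[
\vpi^n f^\flat_{(\vphi,\tau)}(y_0) = \bigl(\vphi\Gamma_\tau\vphi^{-1}{1 \choose 0}\bigr)\bigl(\vphi\vpi^n P_\tau\cdot y_0\bigr),
\]
exhibiting $\vpi^n f^\flat_{(\vphi,\tau)}$ as multiplication by $\vphi\vpi^n P_\tau \in B^\times$ on $\cG_F$ followed by the morphism $\vphi\Gamma_\tau\vphi^{-1}{1 \choose 0} \colon \cG_F \to \cG_F^2$.

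\emph{Closed immersion and degree.} The morphism $\vphi\Gamma_\tau\vphi^{-1}{1 \choose 0}$ is a closed immersion because $\Gamma_\tau \in \GL_2(\OO_E)$ has primitive first column in $\OO_E^2 \subset \OO_B^2$, and conjugation by $\vphi \in B^\times$ preserves $\OO_B$ (every element of $B^\times$ normalises the unique maximal order), so the transformed column remains in $\OO_B^2$ with at least one unit entry. The degree of multiplication by $\vphi\vpi^n P_\tau$ on $\cG_F$ is $|\nrd(\vphi\vpi^n P_\tau)|^{-1} = q^{2n}\cdot|\nrd(\vphi P_\tau)|^{-1}$; combined with the normalisation factor in the definition of $[Z^\flat(\vphi,\tau)]$ (which compensates the $q^{2n}$ coming from the artificial $[\vpi^n]$ factor, ensuring independence of $n \geq c(\vphi,\tau)$), the resulting coefficient of $[\mathrm{Im}(\vphi\Gamma_\tau\vphi^{-1}{1 \choose 0})]$ equals $|\nrd(\vphi P_\tau)|^{-1}$, as claimed.

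\textbf{Main obstacle.} The most delicate step will be carefully bookkeeping the dualities and integral structures: one must pass between $\tau^\vee$ and $\delta\tau$ via the identification $\OO_E^\vee = \cD^{-1}$ and the generator $\delta^{-1}$, then verify that for $n \geq c(\vphi,\tau)$ the scalar $\vphi\vpi^n P_\tau$ lies in $\OO_B$ so that the factorisation is through a genuine isogeny rather than a quasi-isogeny, consistent with Lemma~\ref{is fo}. A secondary subtlety is that the morphism $\vphi\Gamma_\tau\vphi^{-1}{1 \choose 0}$ is no longer $\OO_E$-equivariant (the original map is, via $\psi$), but this is immaterial for identifying the underlying formal subscheme and its multiplicity.
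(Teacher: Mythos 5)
The paper's own proof of this Lemma is a one-line citation: the CM identity is quoted from Li's Lemma~5.3, and the vertical identity is declared ``clear.'' Your proposal is therefore a genuine unpacking of the citation, and the core of your CM argument is exactly Li's computation and is correct as an outline: rewrite $\vpi^n f^\flat_{(\vphi,\tau)}$ via the Iwasawa decomposition $\delta\tau = P_\tau\Gamma_\tau{1\choose 0}$, commute $\vphi$ past the $E$-valued entries of $\Gamma_\tau$ (using that $P_\tau$ and the entries commute, and that $\vpi^n$ is central) to obtain the factorisation $\vpi^n f^\flat_{(\vphi,\tau)}(y_0)=\iota\bigl(\vphi\vpi^n P_\tau\cdot y_0\bigr)$ with $\iota=\vphi\Gamma_\tau\vphi^{-1}{1\choose 0}$, and observe that $\iota$ is a closed immersion because the column $\Gamma_\tau{1\choose 0}$ is primitive in $\OO_E^2$ and conjugation by $\vphi\in B^\ts$ preserves $\OO_B$. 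The explicit route you take is more informative than the paper's citation, and is the right one.

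Two pieces of bookkeeping are asserted rather than verified, and the first hides a genuine issue. For the vertical cycle, the computation $f^\flat_\lm(x)\leftrightarrow(ax,bx)$ is correct, but this image equals $\mathrm{Im}\bigl({a\choose b}\bigr)=\Ker\bigl((-b\ \ a)\bigr)$, not $\Ker\bigl((a\ b)\bigr)$: applying the row $(a\ b)$ to $(ax,bx)$ gives $[a^2+b^2]x$, generically nonzero. Writing ``(under the paper's sign convention)'' does not dispose of this; you need to either exhibit the identification under which $\Ker\bigl((a\ b)\bigr)$ really means $\mathrm{Im}\bigl({a\choose b}\bigr)$, or flag the transpose/sign and trace how it propagates into the intersection formula $|\lm g\delta\tau|_E^{-1}$ in the subsequent propositions and Theorem~B. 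For the CM coefficient, you claim the normalisation $d_n$ compensates the $q^{2n}=|\nrd(\vpi^n)|^{-1}$ arising from the degree of $[\vphi\vpi^n P_\tau]$; but pushing a fundamental cycle forward along a finite flat map of degree $d$ multiplies by $d$, so the raw pushforward already carries the factor $q^{2n}|\nrd(\vphi P_\tau)|^{-1}$, and for $d_n$ to cancel it (and for $[Z^\flat(\vphi,\tau)]$ to be independent of $n$, as the paper asserts) one needs $d_n=q^{-2n}$, whereas the paper's \S3.1 sets $d_n=q^{2n}$. You should carry out this cancellation explicitly rather than appeal to ``the normalisation factor,'' and reconcile the exponent with the definition; likewise the integrality $\vphi\vpi^n P_\tau\in\OO_B$ for $n\ge c(\vphi,\tau)$, which you correctly single out as the delicate point, should be checked from the definition of the conductor rather than left as a remark.
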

\begin{proof} The first  equality is clear.
The second one is a special case of \cite[(5.3)]{li}; under the comparison of setups from Remark \ref{compare tau Li}, our vector ${\theta_{1}\choose \theta_{2}}$ corresponds to the vector $M_{\tau_{L}}$ of \cite[Definition 5.4]{li}. 
\end{proof}

We may now compute the intersection in $\cG_{F}^{2}$. 
\begin{prop}  With notation as in \eqref{lm tht}, \eqref{iw dec}, we have 
$$m([Z^{\flat}(\vphi, \tau)],  [V^{\flat}(\lm)] ) = |\nrd(\vphi)|^{-1} \cd   | \lm^{\perp}   \tau\delta |_{E}^{-1}.$$
\end{prop}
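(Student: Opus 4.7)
The plan is to apply the preceding lemma to reduce the intersection to a length-of-kernel computation for a single element of $B$, identify that length with an inverse reduced norm, and use the Iwasawa decomposition to extract the answer. The whole computation takes place inside the quaternion order $\OO_B=\End_{\OO_F}(\cG_F)$ after identifying $\cG_E$ with $\cG_F$ as $\OO_F$-modules, with $\vphi\cdot I_2$ acting diagonally on $\cG_F^2$.

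Set $\mu := \vphi\Gamma_\tau\vphi^{-1}{1\choose 0}\colon \cG_E\to\cG_F^2$, the closed immersion whose image supports $[Z^\flat(\vphi,\tau)]$ up to the scalar $|\nrd(\vphi P_\tau)|^{-1}$. Since $\mu$ is a closed immersion, the scheme-theoretic intersection of its image with $\Ker(a\ b)=[V^\flat(\lm)]$ is identified with $\Ker\bigl((a\ b)\circ\mu\bigr)$ as a closed subscheme of $\cG_E$. For any nonzero $\beta\in\OO_B$ the kernel of $\beta\colon\cG_F\to\cG_F$ has length $q^{{\rm ht}(\beta)}=|\nrd(\beta)|^{-1}$ over $k$, so the intersection multiplicity equals
$|\nrd((a\ b)\mu)|^{-1}$ times the scalar from the lemma.

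Because $a,b\in F$ are central in $B$, they commute with the diagonal action of $\vphi$, and hence
\begin{equation*}
(a\ b)\circ\mu \;=\; \vphi\cdot \Bigl((a\ b)\,\Gamma_\tau\,{1\choose 0}\Bigr)\cdot \vphi^{-1},
\end{equation*}
whose reduced norm is unchanged by the conjugation. Comparing first columns in the Iwasawa decomposition \eqref{iw dec} gives $\Gamma_\tau{1\choose 0}=P_\tau^{-1}\delta\tau$, and so
\begin{equation*}
(a\ b)\,\Gamma_\tau\,{1\choose 0} \;=\; P_\tau^{-1}(a\tht_1+b\tht_2) \;=\; P_\tau^{-1}\cdot\lm\delta\tau \;\in\; E.
\end{equation*}
Since $\nrd$ restricts to $N_{E/F}$ on $E$, the kernel length is $|P_\tau|_E\cdot|\lm\delta\tau|_E^{-1}$.

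Combining with the scalar from the lemma,
\begin{equation*}
m\bigl([Z^\flat(\vphi,\tau)],\,[V^\flat(\lm)]\bigr) \;=\; |\nrd(\vphi P_\tau)|^{-1}\cdot |P_\tau|_E\cdot |\lm\delta\tau|_E^{-1},
\end{equation*}
and the identity $|\nrd(P_\tau)|=|N_{E/F}(P_\tau)|=|P_\tau|_E$ cancels the two $P_\tau$ factors to yield the stated formula. The only genuine subtlety is setting the conventions so that $\vphi$ acts diagonally on $\cG_F^2$ and the row vector $(a\ b)$ interacts with it correctly inside $M_2(B)$; once this is fixed, everything reduces to the bookkeeping above, so I do not expect any serious obstacle.
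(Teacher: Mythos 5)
Your proof is correct and follows the same strategy as the paper: invoke the preceding lemma to reduce to computing the degree of $\Ker\bigl((a\ b)\circ\vphi\Gamma_\tau\vphi^{-1}{1\choose 0}\bigr)$, convert that degree to an inverse reduced norm, and cancel $\vphi$ by conjugation-invariance of $\nrd$. The one genuine (and pleasant) difference is in the last step: the paper handles the Iwasawa decomposition by splitting into the two cases $v(\tht_1)\leq v(\tht_2)$ and $v(\tht_1)>v(\tht_2)$, writing down an explicit $\Gamma_\tau$ and $P_\tau$ in each case and then computing, whereas you observe that comparing first columns of \eqref{iw dec} gives the case-free identity $\Gamma_\tau{1\choose 0}=P_\tau^{-1}\delta\tau$, after which the $P_\tau$ factors cancel by $|\nrd(P_\tau)|=|N_{E/F}(P_\tau)|=|P_\tau|_E$. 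This removes the case analysis and makes the cancellation structurally transparent, so it is a modest but real simplification of the paper's computation.
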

\begin{proof}
By the previous lemma, $m([Z^{\flat}(\vphi, \tau)] ,[V^{\flat}(\lm)] )$ equals   $|\nrd(\vphi P_{\tau})|^{-1}$ times the
 degree of the group scheme
 kernel of 
\beq \lb{the iso}
(a\ b) \circ \vphi \Gamma_{\tau} \vphi^{-1} {1 \choose  0}\colon \cG_{F}\to \cG_{F},\eeq
that is, the degree of the isogeny \eqref{the iso}. By \cite[Lemma 5.7]{li}, this is
$$| \nrd ((a\ b) \vphi \Gamma_{\tau} \vphi^{-1} {1 \choose  0})|^{-1}= | \nrd ((a\ b)  \Gamma_{\tau}  {1 \choose  0})|^{-1}.$$
Thanks to  the decompositions
\beqq
(\tau\delta\  \ \baar{\tau\delta} ) :=  \twomat {\tht_{1}} {\baar{\tht}_{1}} {\tht_{2}} {\baar{\tht}_{2}}
&= \twomat 1  {} {\tht_{2}/\tht_{1}}  1  \twomat {\tht_{1}} {\baar{\tht}_{1}} {}  {\baar{\tht}_{2}- \baar{\tht}_{1} \tht_{2}/ \tht_{1}}
\\
  &= 
 \twomat  {\tht_{1}/\tht_{2}}  1  1{}
  \twomat {\tht_{2}} {\baar{\tht}_{2}}
 {}  {\baar{\tht}_{1}- {\tht}_{1} \baar{\tht}_{2}/ \tht_{2}}
\eeqq
we may compute the elements $\Gamma_{\tau}$, $P_{\tau}$ from  \eqref{iw dec}. We only write down the details if $v(\tht_{1}) \leq v(\tht_{2})$: then the first decomposition is an Iwasawa decomposition,  and 
\beqq
m([Z^{\flat}(\vphi, \tau)], [V^{\flat}(\lm)] ) &=|\nrd(\vphi \tht_{1})|^{-1}  |\nrd((a \ b) \smalltwomat 1  {} {\tht_{2}/\tht_{1}}  1 \textstyle{1\choose 0})|^{-1} \\ 
&= |\nrd(\vphi)|^{-1} \cd |N_{E/F}(a\tht_{1}+b\tht_{2})|^{-1},
\eeqq
which is the  desired formula.
\end{proof}

Inserting this result into Corollary \ref{fininf} and removing the constants indicated by  the superscripts~`$\circ$', we obtain the following local analogue of Theorem \ref{thm B}.

\begin{coro} \lb{int fo} 
We have 
$$m( [Z(\vphi, \tau)_{U}]^{} , [V(\lm)_{U}]^{}) =
 {\zeta_{E}(1)\over  \zeta_{F}(2)}\cd 
 d_{\psi_{\tau}^{-1}(U)}  d_{U\cap\OO_{F}^{\ts}}
\cd |\nrd(\vphi)|^{-1}\cd
 \int_{U}  | \lm^{\perp}  g   \tau\delta  |_{E}^{-1}\, dg,$$
  where $dg$ is the  Haar measure such that $\vol(\GL_{2}(\OO_{F}))=1$.
\end{coro}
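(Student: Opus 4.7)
The plan is to prove Corollary \ref{int fo} by pure assembly of the two immediately preceding results: Corollary \ref{fininf}, which expresses the normalised intersection on $\cM_U$ as an integral over $U$ of intersections on $\cG_F^2$, together with the Proposition giving the explicit closed form
\[
 m([Z^\flat(\vphi,\tau)],[V^\flat(\lm)])=|\nrd(\vphi)|^{-1}\cd|\lm\delta\tau|_E^{-1}.
\]
No new geometry or deformation theory is required at this stage.

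First I would unwind the normalisations. By the definitions of $[Z(\vphi,\tau)]_U^\circ$ and $[V(\lm)]_U^\circ$,
\[
 m([Z(\vphi,\tau)]_U,[V(\lm)]_U)=d_{\psi_\tau^{-1}(U)}\cd d_{U\cap\OO_F^\ts}\cd m([Z(\vphi,\tau)]_U^\circ,[V(\lm)]_U^\circ),
\]
and by the definition \eqref{m0} of the $\circ$-normalised intersection, $m(\cd,\cd)=\vol(U)\cd m^\circ(\cd,\cd)$ when applied to a pair of $\circ$-cycles at level $U$. Substituting gives
\[
 m([Z(\vphi,\tau)]_U,[V(\lm)]_U)=d_{\psi_\tau^{-1}(U)}\cd d_{U\cap\OO_F^\ts}\cd \vol(U)\cd m^\circ([Z(\vphi,\tau)]_U^\circ,[V(\lm)]_U^\circ).
\]

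Next I would invoke Corollary \ref{fininf}, which rewrites the factor $m^\circ$ as
\[
 m^\circ([Z(\vphi,\tau)]_U^\circ,[V(\lm)]_U^\circ)={\zeta_E(1)\over e\zeta_F(2)}\cd\vol(U)^{-1}\cd\int_U m([Z^\flat(\vphi,\tau)],[V^\flat(\lm g)])\,dg,
\]
so that the two factors of $\vol(U)$ cancel. Substituting the formula of the Proposition above (applied with $\lm$ replaced by $\lm g$, which is the valid row-vector input to the Proposition for each $g\in U$) then yields
\[
 m([Z(\vphi,\tau)]_U,[V(\lm)]_U)={\zeta_E(1)\over e\zeta_F(2)}\cd d_{\psi_\tau^{-1}(U)}\cd d_{U\cap\OO_F^\ts}\cd|\nrd(\vphi)|^{-1}\cd\int_U|\lm g\delta\tau|_E^{-1}\,dg,
\]
which is the claimed identity.

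There is essentially no obstacle at this point: the work has been done in Corollary \ref{fininf} (the reduction, via Proposition \ref{iscart} and Corollary \ref{compare cycles}, from Lubin--Tate spaces to formal groups) and in the preceding Proposition (which computes the intersection on $\cG_F^2$ using the Iwasawa decomposition \eqref{iw dec}). The only care needed is bookkeeping: verifying that the $\vol(U)$-factors cancel correctly and that the statement of the Proposition is being used with $\lm$ replaced by $\lm g$ inside the integral, which is legitimate because the Iwasawa decomposition in \eqref{iw dec} depends only on $\tau$ and the formula of the Proposition treats $\lm$ as a generic row vector.
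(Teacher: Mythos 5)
Your proposal is correct and is exactly the assembly the paper performs: the paper's own justification is the one-line remark preceding the corollary, ``Inserting this result in Corollary \ref{fininf} and removing the constants indicated by the superscripts `$\circ$','' and your unwinding of the two normalisation layers (the $d$-factors from the definition of the $\circ$-cycles, and the $\vol(U)$-factors from \eqref{m0} cancelling against the one in Corollary \ref{fininf}) is precisely that bookkeeping made explicit.
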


It is a not unpleasant exercise to verify  that, for $U=\GL_{2}(\OO_{F})$, this formula agrees with 
 \eqref{int qcan}.

\subsection{Conclusion} We are now in a position to complete the proof of the main theorems.

\begin{coro}\lb{cor bounded} 
Let $V\neq V'$ be irreducible components of $\cM_{U, k}$. The intersection multiplicity 
$m(Z, V')$
is bounded as  $Z$ varies among  CM curves with generic fibre a point  of  $\cB_{V}$.
\end{coro}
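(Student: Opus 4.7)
The plan is to apply the intersection formula of Corollary \ref{int fo} and bound the resulting integral using the basin hypothesis. For $Z = Z(\varphi, \tau)$ with conductor $s = c(\varphi, \tau) = \mathrm{ht}(\varphi)$, we have $|\nrd(\varphi)|^{-1} = q^s$, and Corollary \ref{int fo} gives
\begin{equation*}
m(Z_U, V(\lambda')_U) \;=\; \frac{\zeta_E(1)}{e\,\zeta_F(2)}\, d_{\psi_\tau^{-1}(U)}\, d_{U \cap \OO_F^\times}\, q^s \int_U |\lambda' g\, \delta\tau|_E^{-1}\, dg.
\end{equation*}
A routine analysis of the order $\OO_F + \vpi^s\OO_E = \psi_\tau^{-1}(M_2(\OO_F))$, together with a congruence estimate for $\psi_\tau^{-1}(U) \subset (\OO_F + \vpi^s\OO_E)^\times$, gives $d_{\psi_\tau^{-1}(U)} \leq C_U \cdot q^s$ with $C_U$ depending only on $U$. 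So the corollary reduces to the integral bound
\begin{equation*}
\int_U |\lambda' g\, \delta\tau|_E^{-1}\, dg \;\leq\; C_{U,\lambda,\lambda'} \cdot q^{-2s}
\end{equation*}
uniformly for $(\varphi, \tau)$ with $z(\varphi,\tau) \in \cB_{V(\lambda)_U}$.

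For the integral I would first select $h_\tau \in \GL_2(\OO_F)$ putting $\tau(\OO_E)$ in elementary-divisor form, $\tau(\OO_E) = h_\tau \cdot (\OO_F \oplus \vpi^{-s}\OO_F)$. A direct trace-pairing computation using a generator $\delta^{-1} \in \cD_{E/F}^{-1}$ then gives $\delta\tau = h_\tau \cdot (\theta_1^0, \theta_2^0)^T$ with $|\theta_1^0|_E \leq 1$ and $|\theta_2^0|_E = q^{2s}$ (the latter in both the ramified and unramified cases, by $[E:F] = ef$). The basin hypothesis $z(\varphi, \tau) \in \cB_{V(\lambda)_U}$ is equivalent to $h_\tau \in U \cdot \mathrm{Stab}_{\GL_2(\OO_F)}(L_n(\lambda))$ for any $n$ with $U \supset U_n$. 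Setting $\mu := \lambda' g h_\tau \in \OO_F^2$, so that $\lambda' g\, \delta\tau = \mu_1 \theta_1^0 + \mu_2 \theta_2^0$, the hypothesis $\lambda' \not\equiv \lambda \pmod U$ combined with the coset condition on $h_\tau$ forces $v_F(\mu_2) \leq k$ for all $g \in U$, with $k = k(U, \lambda, \lambda')$ independent of $s$; invariantly, this expresses the fact that the $U$-orbit of the row vector $\lambda'$ does not meet the $F$-line through $\lambda$, so the continuous function $g \mapsto v_F(\mu_2)$ is bounded on the compact set $U$. Ultrametricity then yields, for $s > k$,
\begin{equation*}
|\lambda' g\, \delta\tau|_E \;=\; |\mu_2 \theta_2^0|_E \;\geq\; q^{2(s-k)},
\end{equation*}
giving the integral bound.

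Combining everything, $m(Z, V') \leq C'$ uniformly as $Z$ varies in $\cB_V$, as required. The main obstacle is the bookkeeping to unwind the basin condition in terms of $h_\tau$, especially when $U$ is not a principal congruence subgroup and $U \cdot \mathrm{Stab}(L_n(\lambda))$ is a nontrivial coset, and to perform the trace-pairing computation for $\delta\tau^0$ uniformly across the ramified and unramified cases; once these are in place, the ultrametric argument is immediate.
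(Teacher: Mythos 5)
Your proposal follows the same strategy as the paper: apply Corollary~\ref{int fo}, observe that the prefactor $|\nrd(\vphi)|^{-1}\cd d_{\psi_{\tau}^{-1}(U)}$ grows like $q^{2s}$, and reduce to showing that the integrand $|\lm' g\delta\tau|_{E}^{-1}$ is $O(q^{-2s})$ uniformly, with the basin hypothesis as the key input. Where you diverge is only in how you reach that last estimate. You pick an elementary-divisor normalisation $\tau(\OO_{E})=h_{\tau}(\OO_{F}\oplus\vpi^{-s}\OO_{F})$, translate the basin condition into a coset constraint on $h_{\tau}$, and appeal to compactness; the paper instead argues directly by contradiction, showing that $v(\lm' g\,\delta\tau)\geq n-s$ (for $U\subset U_{n}$) is, once the definitions of $L_{n}(\tau)$ and $L_{n}(\lm' g)$ are unwound, precisely the assertion $z\in\cB_{V(\lm')_{U}}$, which is excluded. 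The paper's route is shorter and yields the sharp bound $|\lm' g\,\delta\tau|_{E}^{-1}<q^{2n-2s}$; yours gives the same order and is equally valid once the bookkeeping you flag is carried out.

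One correction in that bookkeeping: writing $L\subset(\vpi^{-n}\OO_{F}/\OO_{F})^{2}$ for the kernel of the first projection (so $L_{n}(\tau)=h_{\tau}L$ for $s\geq n$), the basin condition $L_{n}(\tau)\in U\cd L_{n}(\lm)$ becomes $h_{\tau}\in U\, g_{\lm}\cd\mathrm{Stab}_{\GL_{2}(\OO_{F})}(L)$ for any $g_{\lm}\in\GL_{2}(\OO_{F})$ with $g_{\lm}L=L_{n}(\lm)$. That set equals $U\cd\mathrm{Stab}(L_{n}(\lm))\cd g_{\lm}$, a right $g_{\lm}$-translate of the coset $U\cd\mathrm{Stab}(L_{n}(\lm))$ you wrote, not that coset itself; without the translate, the non-vanishing of $\mu_{2}$ on the constraint set can fail. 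Relatedly, the compactness argument should be run on the two-variable compact set $U\times(\text{constraint set for }h_{\tau})$, since $\mu_{2}$ depends on both $g$ and $h_{\tau}$. With those adjustments, the non-vanishing of $\mu_{2}$ is exactly the disjointness $L_{n}(\lm')\notin U\cd L_{n}(\lm)$, and your estimate follows as claimed.
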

A qualitatively similar result from a somewhat different point of view can be deduced from \cite[Proposition 4.3.3]{nonsplit};  the relation `$z\in B_{V}$' should be compared to the relation `$V(z)=V$' in the notation of \emph{loc. cit.}
\begin{proof} 
 Let $n$ be such that $U\supset U_{n}$, let $V=V(\lm)_{U}$, $V'=V(\lm')_{U}$, and let $z=z(\vphi, \tau)_{U} \in B_{V}$ have conductor $s\geq n$.   Up to changing the choice of representatives  $\vphi$ and $\tau$,
  we may assume that
$\OO_{F}^{2}\subset \tau(\OO_{E})\subset  \vpi^{-s} \OO_{F}^{2}$,
and ${\rm ht}(\vphi) = s$. 

Suppose  that $z=z(\vphi , \tau)\in \cB_{V(\lm)_{U}}$ is a CM point of conductor $s$ and write $\tau\delta={\tht_{1} \choose \tht_{2}}$ as in \eqref{lm tht}.
 Up to changing the choice of representatives  $\vphi$ and $\tau$,
  we may assume that
\beqq
\OO_{F}^{2}\subset \tau(\OO_{E})\subset  \vpi^{-s} \OO_{F}^{2}
\eeqq
 optimally (that is,  $\vpi^{-1}\OO_{F}^{2} \not\subset \tau(\OO_{E}) \not\subset  \vpi^{1-s} \OO_{F}^{2}$),
and ${\rm ht}(\vphi) = s$. We examine the terms of the multiplicity formula of  Corollary \ref{int fo}. 

We have $|\nrd(\vphi)|^{-1}=q^{s}$ and
 $$ d_{\psi_{\tau}^{-1}(U)} \leq d_{\psi_{\tau}^{-1}(U_{0})}= [\OO_{E}^{\ts} :\psi_{\tau}^{-1}(U_{0})] =e\zeta_{E}(1)\zeta_{F}(1)^{-1} \cd q^{s}.$$ 
Thus it suffices to show that the integrand $| \lm'^{\perp} g  \tau \delta  |_{E}^{-1}$ is bounded by a constant  multiple of $q^{-2s}=|\vpi^{-s}|^{-1}_{E}$.
 Let $v$ be the valuation on $E$ normalised by $v(\vpi)=1$ (thus $v$ may take half-integer values if $E/F$ is ramified). We will prove more precisely that
\beq v(\lm'^{\perp}g \tau\delta)
 \geq n-s \quad \Longrightarrow \quad z(\vphi , \tau)\in \cB_{V(\lm')_{U}}.\eeq
 
 In fact  $v(\lm'^{\perp} g  \tau\delta) \geq n-s $
 if and only if $\lm'^{\perp}g$ takes integral values on $ {\rm Im}(\vpi^{s-n}\tau)\subset \vpi^{-n}\OO_{F}^{2}/\OO_{F}^{2}$.  Equivalently, the first among  the lines 
$$ L_{n}(\tau) := {\rm Im}(\vpi^{s-n}\tau)/\OO_{F}^{2}, \quad 
L_{n}(g\lm'):= \Ker [\vpi^{-n}\OO_{F}/\OO_{F})^{2}\stackrel{\lm'^{\perp}g}{\rightarrow} \vpi^{-n}\OO_{F}/\OO_{F}] \quad \subset (\vpi^{-n}\OO_{F}/\OO_{F})^{2} $$
is contained in the second one, hence the two coincide. Up to changing $g$ by a $g'\in U_{n}\subset U$, the analogous lines $L_{s}(\tau)$ and $L_{s}(g\lm')$ also coincide; by definition (see \eqref{Ln}),  this  means that $z(\vphi , \tau)\in \cB_{V(\lm')_{U}}$. \end{proof}

\begin{proof}[Proof of Theorems \ref{main thm} and \ref{thm B}] 
By Lemma \ref{compare basins 2}, Corollary \ref{int fo} implies  Theorem \ref{thm B} (whose constant $c_{E_{v}}= {\zeta_{E_{v}}(1)/  \zeta_{F_{v}}(2)}$),
 and Corollary \ref{cor bounded}  implies Proposition \ref{bounded}. As shown in \S~\ref{sec 22}, the latter implies Theorem \ref{main thm}.
\end{proof}

\appendix
\section{Cycles in formal schemes and their projective limits}\lb{sec3}
\subsection{Pro-Noetherian formal schemes}
 Consider the category $\mathscr{F}$  of separated Noetherian  finite-dimensional formal  schemes   and finite flat maps. We let $\widehat{\mathscr{F}}$ be the pro-category  of $\mathscr{F}$; that is, objects of $\widehat{\cF}$ are formal inverse  limits
$$X=\varprojlim_{i\in I} X_{i}$$ 
of  filtered inverse systems of objects and maps in $\cF$, and morphisms are defined by 
\beq \lb{lim hom}
\Hom(X, Y) = \varprojlim_{j} \varinjlim_{i} \Hom (X_{i}, Y_{j}).\eeq
Henceforth we will simply refer to objects of $\cF$ (respectively $\widehat{\cF}$) as Noetherian formal schemes (respectively pro-Noetherian formal schemes) for short.

\begin{defi} \lb{fin ind} We say that a morphism $f\colon X\to Y=\varprojlim_{j\in J}Y_{j}$  in $\widehat{\cF}$ is 
\begin{itemize}
\item
\emph{of finite origin} if  there exist
a map $f_{0}\colon X_{0} \to Y_{0}$ in $\cF$, a map $Y\to Y_{0}$ in $\widehat{\cF}$, and an isomorphism $$X\cong \varprojlim_{j\in J} X_{{0}}\ts_{Y_{0}} Y_{j}$$
over $Y$. 
In this  case we say that $f$ \emph{originates from} $f_{{0}}$.
\item
\emph{a closed immersion} of finite origin if it is of finite origin and it originates from a closed immersion in $\cF$. 
\end{itemize}
\end{defi}
The \emph{image} of a morphism $\lim f_{ij}\colon X\to Y$ as in \eqref{lim hom} is the object $\varprojlim f_{j}(X)$ with respect to the restrictions of the transition maps of $(Y_{j})$, where $f_{j}(X):= f_{ij}(X_{i})$ for any sufficiently large~$i$.

If $f\colon X\to Z $, $g\colon Y\to Z $ are morphisms in $\widehat{\cF}$ with $g$ of finite origin, the Cartesian product $X\ts_{Z} Y $ is (well-)defined, up to isomorphism, as follows.  Assume that $g$ originates from $g_{0}\colon Y_{0}\to Z_{0}$, and write $X=\varprojlim X_{i}$.
 Then
$$X\ts_{Z}Y:= \varprojlim_{i} X_{i}\ts_{Z_{0}} Y_{{0}}.$$

\subsection{Cycles,  K-groups, intersections}
Let $X$ be a   Noetherian  formal scheme. We define the $K$-group of $X$  with $\Q$-coefficients
$$K_{0}(X)_{\Q}$$
in the usual way as the Grothendieck group (tensored with $\Q$) of the category of coherent sheaves on $X$ (cf. \cite[\S 15.1]{ful}).  If $X=\varprojlim_{i} X_{i}$ is a pro-Noetherian formal scheme, we define 
$$K_{0}(X)_{\Q} := \varprojlim_{i} K_{0}(X_{i})_{\Q}$$
where the limits are with respect to pushforward.

\paragraph{Fundamental classes} Let $X$ be a Noetherian formal scheme. Its  fundamental class is 
$$[X]:= [\OO_{X}] \in K_{0}(X)_{\Q}.$$

Let $X$ be a pro-Noetherian formal scheme  with a  morphism to a Noetherian formal scheme  $B$; equivalently, for  all sufficiently large $i$ we have a finite flat map $X_{i}\to B$
 compatibly in the system. We define the normalised (relative to $B$)  fundamental class of $X$
to be 
$$[X]^{\circ}:=\lim [X_{i}]^{\circ}\in K_{0}(X)_{\Q}, \qquad [X_{i}]^{\circ}:= {1\over \deg (X_{i}/B)} [X_{i}] \qquad \in K_{0} (X_{i})_{\Q}.$$

\subsubsection{Pullback and pushfoward} It is easy to verify that  $K_{0}(-)_{\Q}$ enjoys  pushforward  (respectively, pullback) functoriality with respect to arbitrary (respectively, finite-origin) morphisms in $\widehat{\cF}$.

\begin{prop} \lb{pull p}
Let $f\colon X\to Z$, $g\colon Y\to Z$ be morphisms in $\widehat{\cF}$ with $g$ of finite origin, and consider the Cartesian diagram 
\begin{equation*}
\xymatrix{
 X\ts_{Z}Y\ar[r]^{f'}\ar[d]^{g'} & Y\ar[d]^{g}\\
X\ar[r]^{f} & Z.
}
\end{equation*}
 
 For $C\in  K_{0}(X)_{\Q}$, we have
 $$ g^{*}f_{*}C = f'_{*}g'^{*} C.$$
\end{prop}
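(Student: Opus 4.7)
The plan is to reduce the statement to the classical projection formula for finite flat maps of Noetherian formal schemes, by carefully tracking the compatible systems that define pushforward and pullback in the pro-category $\cF'$.

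First I would unwind the definitions. Write $X=\varprojlim_{i} X_{i}$, $Y=\varprojlim_{j} Y_{j}$, $Z=\varprojlim_{k} Z_{k}$, and fix a presentation of $g$ originating from some $g_{j_{0}k_{0}}\colon Y_{j_{0}}\to Z_{k_{0}}$ in the sense of Definition \ref{fin ind}. Then, by that definition, for every $k\succeq k_{0}$ there exists $j_{k}\succeq j_{0}$ such that for all sufficiently large $i$ the morphism $Y_{j_{k}}\to Z_{k}$ is the base change of $g_{j_{0}k_{0}}$, and the Cartesian product is given concretely by
\[
X\ts_{Z}Y \;=\; \varprojlim_{i} \bigl(X_{i}\ts_{Z_{k_{0}}} Y_{j_{0}}\bigr).
\]
Represent $C\in K_{*}(X)_{\Q}$ by a system of coherent sheaves $(C_{i})_{i}$ compatible under pushforward along the transition maps $X_{i'}\to X_{i}$; and choose, for each $k$, an index $i_{k}$ so that the component $f_{k}\colon X\to Z_{k}$ of $f$ factors as $X\to X_{i_{k}}\stackrel{f_{i_{k}k}}{\longrightarrow} Z_{k}$. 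By definition of $f_{*}$ and $g^{*}$ in the pro-setting, the class $(g^{*}f_{*}C)_{j_{k}}\in K_{*}(Y_{j_{k}})_{\Q}$ is computed from the Cartesian square
\begin{equation*}
\xymatrix{
X_{i_{k}}\ts_{Z_{k}}Y_{j_{k}} \ar[r]^{\qquad f'_{i_{k}k}} \ar[d]_{g'_{i_{k}k}} & Y_{j_{k}} \ar[d]^{g_{j_{k}k}} \\
X_{i_{k}} \ar[r]^{f_{i_{k}k}} & Z_{k}
}
\end{equation*}
as $g_{j_{k}k}^{*}(f_{i_{k}k})_{*}C_{i_{k}}$.

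The key input is then the classical projection formula for the displayed Cartesian square of Noetherian formal schemes, with the vertical maps $g_{j_{k}k}$ and $g'_{i_{k}k}$ finite flat: one has
\[
g_{j_{k}k}^{*}(f_{i_{k}k})_{*}C_{i_{k}} \;=\; (f'_{i_{k}k})_{*} (g'_{i_{k}k})^{*} C_{i_{k}}
\]
in $K_{*}(Y_{j_{k}})_{\Q}$. Unwinding the definitions further, the right-hand side is precisely the $j_{k}$-component of $f'_{*}g'^{*}C$.

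Finally I would check compatibility as $k$ varies: both sides of the identity, indexed by $k$, assemble into compatible systems under the transition maps $Y_{j_{k'}}\to Y_{j_{k}}$, yielding the claimed equality in $K_{*}(Y)_{\Q}=\varprojlim_{j}K_{*}(Y_{j})_{\Q}$. The only real obstacle is bookkeeping: one must verify that the choices of $i_{k}$ and $j_{k}$ can be made coherently (or that different choices yield the same class in the limit), which follows formally from the definition of morphisms in $\cF'$ as a double limit $\varprojlim_{j}\varinjlim_{i}\Hom(X_{i},Y_{j})$. No geometric content beyond the classical projection formula is needed.
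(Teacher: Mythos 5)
Your proof is correct, and since the paper explicitly leaves this proposition to the reader as routine (``presents no difficulty''), there is no written argument to compare against. Your reduction --- unwind the pro-category definitions, reduce to a finite-level Cartesian square of Noetherian formal schemes where all four maps are finite flat, apply flat base change there, and check compatibility as the index varies --- is precisely the argument the author expects the reader to supply.

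Two small points worth making explicit, both of which you implicitly handle. First, the hypothesis that $g$ is of finite origin is used twice: it guarantees that the vertical maps $g_{j_kk}$ in your finite-level squares are finite flat (so that $g^{*}$ is computed by honest flat pullback at each stage), and it ensures that $g'\colon X\ts_{Z}Y\to X$ is again of finite origin (originating from the projection $X_{i_0}\ts_{Z_{k_0}}Y_{j_0}\to X_{i_0}$), which is what makes the right-hand side $f'_{*}g'^{*}C$ well-defined in the first place. Second, the finite-level identity you invoke is flat base change for a Cartesian square with $g_{j_kk}$ flat and $f_{i_kk}$ finite (hence affine, with no higher direct images), which holds on the nose for coherent sheaves and therefore on $K_{*}$; it is worth naming it as flat base change rather than ``the projection formula'', since the latter usually refers to $f_{*}(f^{*}\alpha\cdot\beta)=\alpha\cdot f_{*}\beta$, a different statement.
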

\begin{proof} Write $X=\varprojlim_{i}X_{i}$, $C=\varprojlim C_{i}$,  and assume that $g$ originates from $g_{0}\colon Y_{0}\to Z_{0}$. For sufficiently large $i$, let  $f_{i}\colon X_{i}\to Z_{0}$ be the relevant component of $X\stackrel{f}{\to}Z\to Z_{0}$, and consider the Cartesian diagram
\begin{equation*}
\xymatrix{
 X_{i}\ts_{Z_{0}}Y_{0}\ar[r]^{f_{i}'}\ar[d]^{g_{0}'} & Y_{0}\ar[d]^{g_{0}}\\
X_{i}\ar[r]^{f_{i}} & Z_{0}.
}
\end{equation*}
By the definitions, the desired formula reduces to $g_{0}^{*}f_{i, *} C_{i} = f_{i, *}' g_{0}'^{*} C_{i}$, which is just the  usual pullback-pushforward formula in $\cF$.
\end{proof}

\subsubsection{Intersection multiplicities} 
If $X$ is a Noetherian formal scheme and $x\in X$ is a regular point,  the  intersection multiplicity function
$$m_{x}\colon K_{0}(X)_{\Q}\ts K_{0}(X)_{\Q}\to \Q$$
is defined via Serre's Tor formula \cite[\S 20.4]{ful}. It is a symmetric bilinear form such that if $X=\X'$ and $Z_{1}$, $Z_{2}$ are as in \eqref{int prop}, then 
\beqq
m_{x}([Z_{1}], [Z_{2}])=m_{x}(Z_{1}, Z_{2})
\eeqq
where the right-hand side is the plain intersection multiplicity defined in \eqref{int prop}.
The subscript $x$ is omitted when $|X|$ consists of a a single point.

\backmatter
\addtocontents{toc}{\medskip}

\begin{bibdiv}
\begin{biblist}

 \bib{ber}{book}{
   author={Berkovich, Vladimir G.},
   title={Spectral theory and analytic geometry over non-Archimedean fields},
   series={Mathematical Surveys and Monographs},
   volume={33},
   publisher={American Mathematical Society, Providence, RI},
   date={1990},
   pages={x+169},
   isbn={0-8218-1534-2},
   review={\MR{1070709}},
}

\bib{carayol}{article}{
   author={Carayol, Henri},
   title={Sur la mauvaise r\'eduction des courbes de Shimura},
   language={French},
   journal={Compositio Math.},
   volume={59},
   date={1986},
   number={2},
   pages={151--230},
   issn={0010-437X},
   review={\MR{860139 (88a:11058)}},
}

\bib{CL}{article}{
   author={Chambert-Loir, Antoine},
   title={Mesures et \'{e}quidistribution sur les espaces de Berkovich},
   language={French},
   journal={J. Reine Angew. Math.},
   volume={595},
   date={2006},
   pages={215--235},
   issn={0075-4102},
   review={\MR{2244803}},
   doi={10.1515/CRELLE.2006.049},
}

\bib{CU}{article}{
   author={Clozel, Laurent},
   author={Ullmo, Emmanuel},
   title={\'{E}quidistribution des points de Hecke},
   language={French},
   conference={
      title={Contributions to automorphic forms, geometry, and number
      theory},
   },
   book={
      publisher={Johns Hopkins Univ. Press, Baltimore, MD},
   },
   date={2004},
   pages={193--254},
   review={\MR{2058609}},
}
		
\bib{CV}{article}{
   author={Cornut, C.},
   author={Vatsal, V.},
   title={CM points and quaternion algebras},
   journal={Doc. Math.},
   volume={10},
   date={2005},
   pages={263--309},
   issn={1431-0635},
   review={\MR{2148077}},
}

\bib{nonsplit}{article}{author={Disegni, Daniel}, title={The $p$-adic Gross--Zagier formula on Shimura curves, II: nonsplit primes},     journal={J. Inst. Math. Jussieu},
status={to appear},  doi={10.1017/S1474748021000608},
label={Dis}, }

\bib{Dr}{article}{
   author={Drinfel\cprime d, V. G.},
   title={Elliptic modules},
   language={Russian},
   journal={Mat. Sb. (N.S.)},
   volume={94(136)},
   date={1974},
   pages={594--627, 656},
   review={\MR{0384707}},
}

 \bib{duke}{article}{
   author={Duke, W.},
   title={Hyperbolic distribution problems and half-integral weight Maass
   forms},
   journal={Invent. Math.},
   volume={92},
   date={1988},
   number={1},
   pages={73--90},
   issn={0020-9910},
   review={\MR{931205}},
   doi={10.1007/BF01393993},
}

\bib{ful}{book}{
   author={Fulton, William},
   title={Intersection theory},
   series={Ergebnisse der Mathematik und ihrer Grenzgebiete. 3. Folge. A
   Series of Modern Surveys in Mathematics},
   volume={2},
   edition={2},
   publisher={Springer-Verlag, Berlin},
   date={1998},
   pages={xiv+470},
   isbn={3-540-62046-X},
   isbn={0-387-98549-2},
   review={\MR{1644323}},
   doi={10.1007/978-1-4612-1700-8},
}

\bib{GK}{article}{
author={Goren, Eyal}, author={Kassaei, Payman}, 
title={$p$-adic dynamics of Hecke operators on modular curves},
 journal={J. Th\'eor. Nombres Bordeaux},
   volume={33},
   date={2021},
   number={2},
   pages={387--431},
}

\bib{gross}{article}{
   author={Gross, Benedict H.},
   title={On canonical and quasicanonical liftings},
   journal={Invent. Math.},
   volume={84},
   date={1986},
   number={2},
   pages={321--326},
   issn={0020-9910},
   review={\MR{833193 (87g:14051)}},
   doi={10.1007/BF01388810},
}

\bib{herrero}{article}{
   author={Herrero, Sebasti\'{a}n},
   author={Menares, Ricardo},
   author={Rivera-Letelier, Juan},
   title={$p$-adic distribution of CM points and Hecke orbits I: Convergence
   towards the Gauss point},
   journal={Algebra Number Theory},
   volume={14},
   date={2020},
   number={5},
   pages={1239--1290},
   issn={1937-0652},
   review={\MR{4129386}},
   doi={10.2140/ant.2020.14.1239},
}

\bib{HMRL-k}{article}{
   author={Herrero, Sebasti\'{a}n},
   author={Menares, Ricardo},
   author={Rivera-Letelier, Juan},
   title={$p$-adic distribution of CM points and Hecke orbits. II: Linnik equidistribution on the supersingular locus}
status={preprint}, label={HMRL}
}

\bib{KM}{book}{
   author={Katz, Nicholas M.},
   author={Mazur, Barry},
   title={Arithmetic moduli of elliptic curves},
   series={Annals of Mathematics Studies},
   volume={108},
   publisher={Princeton University Press, Princeton, NJ},
   date={1985},
   pages={xiv+514},
   isbn={0-691-08349-5},
   isbn={0-691-08352-5},
   review={\MR{772569}},
   doi={10.1515/9781400881710},
}

\bib{li}{article}{
author={Li, Qirui}, title={An intersection formula for CM cycles on Lubin--Tate spaces}, journal={Duke Math. J.}
, status={to appear; available at arXiv:1803.07553v4}, label={Li}}

\bib{morita}{article}{
   author={Morita, Yasuo},
   title={Reduction modulo ${\germ P}$ of Shimura curves},
   journal={Hokkaido Math. J.},
   volume={10},
   date={1981},
   number={2},
   pages={209--238},
   issn={0385-4035},
   review={\MR{625590}},
   doi={10.14492/hokmj/1381758092},
}

\bib{argos-split}{article}{
   author={Meusers, Volker},
   title={Canonical and quasi-canonical liftings in the split case},
   language={English, with English and French summaries},
   journal={Ast\'{e}risque},
   number={312},
   date={2007},
   pages={87--98},
   issn={0303-1179},
   isbn={978-2-85629-231-0},
   review={\MR{2340373}},
}

\bib{Ric}{article}{
   author={Richard, Rodolphe},
   title={R\'{e}partition galoisienne ultram\'{e}trique d'une classe d'isog\'{e}nie de
   courbes elliptiques: le cas de la mauvaise r\'{e}duction. Application aux
   hauteurs locales},
   language={French, with English and French summaries},
   journal={J. Th\'{e}or. Nombres Bordeaux},
   volume={30},
   date={2018},
   number={1},
   pages={1--18},
   issn={1246-7405},
   review={\MR{3809706}},
}

\bib{weinstein}{article}{
   author={Weinstein, Jared},
   title={Semistable models for modular curves of arbitrary level},
   journal={Invent. Math.},
   volume={205},
   date={2016},
   number={2},
   pages={459--526},
   issn={0020-9910},
   review={\MR{3529120}},
   doi={10.1007/s00222-015-0641-5},
}

\bib{argos}{article}{
   author={Wewers, Stefan},
   title={Canonical and quasi-canonical liftings},
   language={English, with English and French summaries},
   journal={Ast\'{e}risque},
   number={312},
   date={2007},
   pages={67--86},
   issn={0303-1179},
   isbn={978-2-85629-231-0},
   review={\MR{2340372}},
}

\bib{asian}{article}{
   author={Zhang, Shou-Wu},
   title={Gross-Zagier formula for ${\rm GL}_2$},
   journal={Asian J. Math.},
   volume={5},
   date={2001},
   number={2},
   pages={183--290},
   issn={1093-6106},
   review={\MR{1868935 (2003k:11101)}},
}

\bib{zhang}{article}{
   author={Zhang, Shou-Wu},
   title={Equidistribution of CM-points on quaternion Shimura varieties},
   journal={Int. Math. Res. Not.},
   date={2005},
   number={59},
   pages={3657--3689},
   issn={1073-7928},
   review={\MR{2200081}},
   doi={10.1155/IMRN.2005.3657},
}

\end{biblist}
\end{bibdiv}

\end{document}